\newtheorem{theorem}{Theorem}[section]
\newtheorem{lemma}{Lemma}[section]
\newtheorem{proposition}{Proposition}[section]
\newtheorem{corollary}{Corollary}[section]
\newtheorem{definition}{Definition}[section]
\newenvironment{example}{\noindent\textbf{Example.~}}{}
\numberwithin{equation}{section}
\def\qed{\hfill$\diamond$\par \bigskip}
\renewcommand{\phi}{\varphi}
\newcommand{\Ll}[1]{\mathbf{L}^{#1}}
\newcommand{\caratt}[1]{{\displaystyle\chi^{\vphantom{\big\{}}_{\strut{\textstyle #1}}}}
\newcommand{\dirac}[1]{{\displaystyle\delta_{\strut{\textstyle #1}}}}
\newcommand{\Llloc}[1]{\mathbf{L^{#1}_{loc}}}
\newcommand{\C}[1]{\mathbf{C^{#1}}}
\newcommand{\Cc}[1]{\mathbf{C_c^{#1}}}
\newcommand{\modulo}[1]{{\left|#1\right|}}
\newcommand{\norma}[1]{{\left\|#1\right\|}}
\newcommand{\N}{{\mathbb{N}}}
\newcommand{\tv}{\mathrm{TV}}
\newcommand{\lip}{\mathrm{Lip}}
\newcommand{\BV}{\mathbf{BV}}
\newcommand{\reali}{\mathbb{R}}
\newcommand{\naturali}{\mathbb{N}}
\newcommand{\spt}{\mathop{\mathrm{spt}}}
\newcommand{\der}{\mathrm{d}}
\newcommand{\sgn}{\mathrm{sgn}}
\begin{document}
\allowdisplaybreaks

\title[Rigorous derivation of scalar conservation laws via many particle limit]{Rigorous derivation of nonlinear scalar conservation laws from follow-the-leader type models via many particle limit}

%\subtitle{Do you have a subtitle?\\ If so, write it here}

%\titlerunning{Rigorous derivation of scalar conservation laws via many particle limit}        % if too long for running head

\author{M.~Di Francesco}
\address{Marco Di Francesco - Department of Information Engineering, Computer Science, and Mathematics - University of L'Aquila, Via Vetoio 1, 67100 L'Aquila, Italy.}
\email{mdifrance@gmail.com}
\author{M.D.~Rosini}
\address{Massimiliano D. Rosini - ICM, University of Warsaw, ul.~Prosta 69, 00-838 Warsaw, Poland}
\email{mrosini@icm.edu.pl}

\begin{abstract}
We prove that the unique entropy solution to a scalar nonlinear conservation law with \emph{strictly monotone velocity} and nonnegative initial condition can be rigorously obtained as the large particle limit of a microscopic follow-the-leader type model, which is interpreted as the discrete Lagrangian approximation of the nonlinear scalar conservation law. More precisely, we prove that the empirical measure (respectively the discretised density) obtained from the follow-the-leader system converges in the $1$--Wasserstein topology (respectively in $\Llloc1$) to the unique Kruzkov entropy solution of the conservation law. The initial data are taken in $\Ll\infty$, nonnegative, and with compact support, hence we are able to handle densities with vacuum. Our result holds for a reasonably general class of velocity maps (including all the relevant examples in the applications, e.g.~in the Lighthill-Whitham-Richards model for traffic flow) with possible degenerate slope near the vacuum state. The proof of the result is based on discrete $\BV$ estimates and on a discrete version of the one-sided Oleinik-type condition. In particular, we prove that the regularizing effect $\Ll\infty \mapsto \BV$ for nonlinear scalar conservation laws is intrinsic of the discrete model.
\end{abstract}

\maketitle

\begin{center}
  \begin{tabular}{p{130mm}}
    \noindent{{\bf Keywords:} Micro-macro limit \and Scalar conservation laws \and Follow-the-leader models \and Oleinik condition \and Entropy solutions \and Particle method.}\\[3pt]
  {\bf 2010 AMS Subject classification:} 35L65 \and 35L45 \and 90B20 \and 65N75 \and 82C22\,.\\[5pt]
  \end{tabular}
\end{center}

\section{Introduction}\label{sec:Intro}

\subsection{Nonlinear scalar conservation laws in one space dimension}

%Conservation laws are applied to model the evolution of continuum quantities and arise in several areas of research.
A scalar conservation law in one space dimension for the unknown variable $\rho$ is a first order Partial Differential Equation (PDE) of the form
\begin{align}\label{eq:LWR_intro}
&\rho_t +f(\rho)_x = 0,&&t> 0, ~ x\in \reali,
\end{align}
see e.g.~the books~\cite{BressanBook,DafermosBook,SerreBook} as general references.
%The PDE~\eqref{eq:LWR_intro} is usually coupled with an initial condition $\rho(0,x)$ at time $t=0$.
%
%From the modelling point of view, opposite to a \emph{microscopic} (or atomistic) description, the unknown of a nonlinear conservation law describes a \emph{macroscopic quantity} such as, for instance, the local density or the local velocity of a fluid, the local temperature of an elastic body, or the local electrical charge density in a dielectric material. More specifically, the unknown $\rho$ in~\eqref{eq:LWR_intro} typically describes the evolution of the \emph{mass density} of a given medium with a one-dimensional structure.
The unknown $\rho$ typically describes the \emph{mass density} of a given medium with a one-dimensional structure.
%In this case, the equation~\eqref{eq:LWR_intro} itself is derived by assuming that the total (linear) mass of the medium in any interval $[a,b]$ and at any time $t>0$ can be expressed via a density function $\rho(t,x)$, $x \in [a,b]$.
In this case, a conservation law of the form~\eqref{eq:LWR_intro} can be derived under the hypothesis that the time evolution of the (linear) mass contained in an arbitrary interval $]a,b[ \subset \reali$ at time $t>0$, namely the integral $\int_a^b \rho(t,x) \, {\der} x$, is only affected by the \emph{flux} $f(\rho)$ of the medium at the edges of the interval, $x=a$ and $x=b$. In this sense, \eqref{eq:LWR_intro} can be seen as a \emph{continuity equation} for $\rho$, and the expression
\begin{equation}\label{eq:v_intro}
f(\rho)=\rho \, v(\rho)
\end{equation}
arises very naturally, where $\rho\mapsto v(\rho)$ is a constitutive law for the \emph{Eulerian velocity} of the medium.% The specific expression for $v$ depends on the application under study.

The formulation of the equation~\eqref{eq:LWR_intro} relies on the \emph{continuum assumption}, in which the medium is assumed to be indefinitely divisible without changing its physical nature.
Such assumption is needed to introduce the concept of macroscopic local density $\rho$, defined as the limit of the ratio $\Delta\ell/\Delta x$ as the measure of the elementary interval $\Delta x$ goes to zero, being $\Delta\ell$ the (linear) mass contained in $\Delta x$.
The continuum assumption is ensured by considering a \emph{very large mass} compared to the size of the domain occupied by the medium.

The density of vehicles in traffic flow is a typical example from real world applications in which an equation of the form~\eqref{eq:LWR_intro} is used as the `macroscopic counterpart' of a large system of moving vehicles. Traffic flow is indeed one of the main motivating applications behind the present paper, and we shall therefore describe the use of the continuum assumption in this context in Section~\ref{subsec:traffic_intro}.

The nonlinearity in the constitutive law $f=f(\rho)$ of the flux has made the mathematical theory for~\eqref{eq:LWR_intro} a challenging topic through the past decades. In Section~\ref{sec:LWRmodel} we recall the basic notions of the classical \emph{theory of entropy solutions} for scalar conservation laws, the foundations of which go back to the pioneering papers by Oleinik~\cite{oleinik} and Kruzkov~\cite{Kkruzkov}.

Nonlinear conservation laws have proven to feature many advantages in their interplay with applications (in particular in fluid dynamics and in traffic flow). First of all, they provide a very simple description of the \emph{shock structures} in the phenomena under study (see e.g.~compression waves in fluid mechanics, or queues in vehicular traffic). Most importantly, the mathematical theory developed in the past decades have greatly helped the development of efficient numerical schemes (and vice versa!), which are much handier and easier to manage when compared to agent based methods arising from a microscopic `moving particles' approach. Consequently, the continuum approach based on conservation laws allows to state and possibly solve optimal control and optimal management problems, and to easily extend the theory to more complex structures (such as networks).

\subsection{Goal of the paper: the many particle approximation of a scalar conservation law}

The goal of this paper is to prove that (under reasonable assumptions on the velocity map $v$, and assuming nonnegative initial data) a scalar nonlinear conservation law of the form~\eqref{eq:LWR_intro} can be \emph{solved} as a \emph{many particle limit} of a discrete (microscopic) model of interacting particle systems solving a suitable set of ODEs.
We emphasise here that the velocity map $\rho\mapsto v(\rho)$ will be assumed to be \emph{monotonically decreasing} w.r.t.~$\rho\geq0$, and with $v(0)=v_{\max}<+\infty$. The case of a monotonically increasing $v$ can be recovered easily by simple modifications in our construction.

Our approach can be sketched as follows. We fix in $\Ll1\cap \Ll\infty$ an initial density $\bar\rho \geq0$ with compact support and having total (linear) mass $L>0$. For a given integer $N>0$, we split the subgraph of $\bar\rho$ in $N$ adjacent regions of equal mass $\ell\doteq L/N$, with the endpoints of each region positioned at $\bar{x}_i \in \reali$, $i=0,\ldots,N$. The points $\bar{x}_i$ are interpreted as (ordered) particles with mass $\ell$, and they are taken as initial condition to an ODE system describing the evolution of the particles in the discrete setting, namely to the \emph{Follow-The-Leader (FTL) system}
\begin{subequations}\label{eq:ODEintro}
\begin{align}\label{eq:ODEintro1}
  & \dot{x}_{N}(t) = v_{\max},\\\label{eq:ODEintro2}
  & \dot{x}_{i}(t) = v\left(\frac{\ell}{x_{i+1}(t) - x_i(t)}\right),\qquad i=0,\ldots,N-1.
\end{align}
\end{subequations}
The points $x_i(t)$ are interpreted as moving particles on the real line. Basically, each particle moves with a velocity which is computed (through the map $v$) via the discrete density obtained by detecting the distance with the nearest right neighborhood of $x_i$ at time $t$. The last particle on the right $x_N$ has no other particles to its right, therefore the value of the discrete density on the right of $x_N$ is zero, and this justify the ODE~\eqref{eq:ODEintro1} for $x_N$.

We remark here that no collisions occur between the particles (hence overtaking is not allowed), as the distance between two consecutive points can be proven to satisfy an efficient lower bound by a fixed multiple of $\ell$ depending on the initial condition. We shall focus on this issue in Lemma~\ref{lem:1} below. We shall describe the FTL model in detail below in Section~\ref{sec:FTLmodel}.

After having solved \eqref{eq:ODEintro} for all times, we consider the empirical measure
\begin{equation}\label{eq:empirical_intro}
    \rho^N(t)=\ell\sum_{i=0}^{N-1} \delta_{x_i(t)},
\end{equation}
and prove in Theorem~\ref{thm:main} that its limit (in a measure sense to be explained later on) as $N$ goes to infinity is actually an $\Ll1$ density $\rho$, which satisfies the scalar conservation law~\eqref{eq:LWR_intro} with $f(\rho)=\rho \, v(\rho)$ in the Oleinik-Kru{\v{z}}kov entropy sense~\cite{Kkruzkov,oleinik_discontinuous}, see Definition~\ref{def:entropy_intro} below.

Our convergence result has a natural interpretation as a \emph{many particle limit} for the scalar conservation law~\eqref{eq:LWR_intro}. In this sense, it can be seen as an abstract particle method for~\eqref{eq:LWR_intro} which can be also applied in the context of numerics. On the other hand, the discrete model~\eqref{eq:ODEintro} can be also interpreted as a discrete \emph{Lagrangian} formulation of~\eqref{eq:LWR_intro}, which makes our result meaningful from a physical point of view, as it validates the use of the macroscopic model \eqref{eq:LWR_intro} in cases in which the microscopic dynamics are easier to justify compared to the macroscopic ones.

Although the literature on nonlinear conservation laws is extremely rich of effective numerical schemes (we mention here the pioneering work of Glimm~\cite{glimm} for systems, and the wave-front tracking algorithm proposed by Dafermos in~\cite{Daf72} and improved later on by Di~Perna~\cite{dip76} and Bressan~\cite{Bre92}, see~\cite{BressanBook} and the references therein for more details), to our knowledge the rigorous approximation of an entropy solution to a scalar conservation law by the empirical solution to an ODE system of Lagrangian particles in the spirit of~\eqref{eq:ODEintro} has not been covered yet. The recent paper~\cite{ColomboRossi} provides preliminary results, but it does not contain the needed estimates to justify the limiting procedure. The main novelty with respect to previous results in the literature is that our result is purely \emph{constructive}, in the sense that it can be considered as an alternative tool to actually \emph{solve} a scalar conservation law. No property of the limiting solution is used, except the uniqueness of entropy solutions in~\cite{Kkruzkov,oleinik_discontinuous} which is used to prove that the scheme has a unique limit. As a byproduct of our work, the Kruzkov entropy condition that allows to single out a unique solution to \eqref{eq:LWR_intro} can be now also intuitively justified by the fact that it is satisfied by our discrete particle approximation, and is inherited by the density $\rho$ obtained in the many particle limit.

Our approach differs from most of the numerical approaches to the solution to a scalar conservation law in that it interprets the microscopic limit as a\emph{ mean field limit} of a system of interacting particles with nearest neighbour type interaction, in the spirit of (locally and non-locally) interacting particles systems in statistical mechanics, probability, kinetic theory, mathematical biology, etc. In this sense, our result can be expressed in the framework of large (deterministic) particle limits with application to several contexts in fluid mechanics, see e.g. the classical references~\cite{dobrushin,morrey,onsager}. In one space dimension, a key result in the context of deterministic approximations is the one by Russo~\cite{russo}, which applies to the linear diffusion equation, in which the diffusion operator is replaced by a nearest neighbour interaction term (see also later generalizations to nonlinear diffusion in~\cite{maccamy}). A recent result which uses the same approach to nonlinear drift diffusion equations is presented in~\cite{matthes_osberger}. We also mention here the paper by Brenier and Grenier~\cite{grenier}, which provides a particle justification of the pressureless Euler system (and a particle approximation for a scalar conservation law, although with a completely different approach and interpretation). Our approach can be considered more in the spirit of~\cite{russo}, applied to a scalar conservation law of traffic type.

The existing numerical method for scalar conservation laws which most resembles our particle method is probably the wave-front tracking algorithm, in which the solution is approximated by a piecewise constant profile which is discontinuous on a finite number of moving fronts. Such a structure suggests the \emph{total variation} as the natural quantity to look at in order to perform efficient uniform estimates, and the space $\Ll1$ as the natural environment to set up the problem and to measure the error in the approximation procedure. In our case, the approximating sequence is a \emph{linear combination of Dirac's deltas}. Therefore, a \emph{measure topology} is needed to compare the approximating solution and its limit. We shall show that the most natural choice for such a topology is (a scaled version of) the \emph{$1$--Wasserstein distance}, see~\cite{AGS,villani}.

We emphasize that the particle approach presented here, as well as the methodology used, can be potentially adapted to detect more general macroscopic models by refining the microscopic formulation of the FTL type model. Moreover, simple boundary conditions can be achieved in the limit by simple modifications of the discrete model (e.g.~with entrances and exits).

\subsection{Formal derivation of the scheme: the use of Wasserstein distance}

The main advantage in using the topology induced by the Wasserstein distance (in our one-dimensional context) relies on its identification with the $\Ll1$--topology in the space of \emph{pseudo-inverses of cumulative distributions}. Roughly speaking, let $\rho$ be a nonnegative solution to~\eqref{eq:LWR_intro} with mass $L>0$, and let
\[F(t,x)\doteq\int_{-\infty}^x \rho(t,x) \,{\der}x\,\in [0,L],\]
be its primitive. The pseudo inverse variable
\begin{align*}
    &X(t,z)\doteq\inf\left\{x\in \reali \,\colon\, F(x)>z\right\},&&z \in \left[0,L\right[,
\end{align*}
formally satisfies the \emph{Lagrangian PDE}
\begin{equation*}
    X_t(t,z)=v\left(\frac{1}{X_z(t,z)}\right).
\end{equation*}
Now, if we replace the $z$--derivative of $X$ by the forward finite difference
\begin{equation*}
    X_z\approx \frac{X(t,z+\ell)-X(t,z)}{\ell} ,
\end{equation*}
and assume that $X$ is piecewise constant on intervals of length $\ell$, the ODE system~\eqref{eq:ODEintro} is immediately recovered, with the structure
\[X(t,z)=\sum_i x_i(t) \, \chi_{[i\ell,(i+1)\ell[}(z) .\]
We shall explain the above formal computation more in detail in Section~\ref{sec:formalderivation1} in the Appendix.

The use of pseudo-inverse variables and Wasserstein distances in the framework of scalar conservation laws is not totally new. In~\cite{CDL1}, a contraction estimate in the so-called $\infty$--Wasserstein distance for genuinely nonlinear scalar conservation laws was derived. The case of non-decreasing solution was treated earlier in~\cite{BBL}. In the special case of the LWR equation for traffic flow, we also remark here that in~\cite{Newell} a simplified version of the model~\eqref{eq:LWR_intro} is derived by introducing as new variable the cumulative number of vehicles passing through a location $x$ at time $t$ starting from the passage of some reference vehicle, see~\cite{Aubin2010963,Daganzo2005187} for recent developments of this theory.

\subsection{Technical aspects of the problem. The discrete Oleinik condition}\label{subsec:intro_technical}

From the technical point of view, our convergence result relies first of all on proving that the empirical measure~\eqref{eq:empirical_intro} has the same (weak) $N\rightarrow +\infty$ limit as the piecewise constant approximation
\begin{equation*}
    \hat{\rho}^N(t,x)=\sum_{i+1}^{N-1}y_i(t)\chi_{[x_i(t),x_{i+1}(t)[},\qquad y_i(t)\doteq \frac{\ell}{x_{i+1}(t)-x_i(t)},
\end{equation*}
in which $y_i(t)$ is the discrete Lagrangian version of the density. The most important step, however, lies in providing strong $\Ll1$ compactness of $\hat{\rho}^N$. This task is performed in two different ways. In the case of $\BV$ initial data, we are able to provide a direct estimate of the total variation of the discrete density (see Proposition~\ref{pro:compactness1}). On the other hand, our main result concerns with the case of general $\Ll\infty$ data: in this case, a key estimate on the particle model (see Lemma~\ref{lem:oleinik}), which can be considered as a discrete version of the Oleinik condition for the scalar conservation law, allows to provide strong compactness even if the initial total variation is unbounded. In some sense, this proves that the one-sided Lipschitz regularizing effect of the scalar conservation law~\eqref{eq:LWR_intro} is somehow an intrinsic property of the discrete Lagrangian formulation of the model. We defer to~\cite{GolsePerthame2013} and the references therein for general results on the regularizing effect for scalar conservation laws.

For numerical purposes, the use of discrete Oleinik conditions has been addressed before for the Lax-Friedrichs and Godunov schemes in~\cite{osher,leveque,oleinik_discontinuous,tadmor}. There is also a similar result for second order systems in~\cite{degond_rascle}. The striking novelty in our approach is the fact that our discrete Oleinik condition is only posed in terms of the velocity field, whereas the classical Oleinik condition is stated in terms of the derivative of the flux, see~\cite{hoff83}. This is due to the fact that the discrete model is a Lagrangian one, and is therefore characterised by the velocity law. The advantage of having the discrete one-sided Lipschitz condition in terms of the velocity is that we can also consider velocity laws with degenerate slopes at $\rho=0$. An interesting numerical feature (which is however quite natural when considering particle approximations) is that the discrete approximation $\hat{\rho}^N$ for the density has \emph{no vacuum regions} in the interior of its support, no matter whether or not the (continuum) initial condition is made up by more than one hump. Finally, let us mention that our discrete density $\hat{\rho}^N$ is always discontinuous on at most $N+1$ fronts, unlike in the wave front tracking approximation in which the number of jumps may increase in time.

\subsection{A motivating example: traffic flow}\label{subsec:traffic_intro}

The macroscopic variables describing vehicular traffic are the (mean) density $\rho$ (number of vehicles per unit length of the road), the (mean) velocity $v$ (space covered per unit time by the vehicles) and the flux $f$ (number of vehicles per unit time).
By definition we have that
\[f = \rho \, v .\]
The conservation of the number of vehicles along a road with no entrance or exit is expressed by the PDE
\[\rho_t +f_x = 0,\]
where $t$ is the time and $x$ the position along the road.
To close the above system of two equations and three unknowns, a further condition has to be imposed.
Two main approaches are used in the literature (see~\cite{Hoogendoorn01state-of-the-artof,PiccoliTosinsurvey,Rosinibook} for a survey): first order and second order models.
The former are based on a constitutive law, which expresses one of the three unknowns as function of the remaining two.
In the latter, a further evolution equation is imposed.
The prototype of first order models is the Lighthill-Whitham-Richards model (LWR)~\cite{LighthillWhitham,Richards},
which assumes that the velocity can be expressed as an explicit function of the density alone. Hence, LWR is nothing but~\eqref{eq:LWR_intro}-\eqref{eq:v_intro} with  the choice of $v$ depending on the specific situation, see Example~\ref{rem:example_velocities} below. The most celebrated second order model is the Aw-Rascle-Zhang model (ARZ)~\cite{Aw2000916,Zhang2002275}, which adds an evolution equation that can be regarded as a continuum analogue of Newton's law.

The continuum assumption is not immediately justifiable in the context of vehicular traffic, as the number of vehicles is typically far lower than the typical number of molecules e.g.~in fluid dynamics. Usually, the continuum hypothesis is accepted as a technical approximation of the physical reality, regarding macroscopic quantities as measures of traffic features.
In order to justify and make more clear the continuum hypothesis, the study of the discrete-to-continuum limit for second order models has been proposed in~\cite{AKMR2002,degond_rascle}. First attempts at analyzing the particle approximation for first order macroscopic models have been recently proposed in~\cite{ColomboMarson,ColomboRossi,Rossi}. From the modelling point of view, the discrete-to-continuum limit in the context of traffic flow may be also considered as the theoretical counterpart of reconstructing the traffic state of a region through high-sampling data from GPS devises. Continuum traffic models can be also detected from the microscopic scale delivered by kinetic models, see e.g.~the approach proposed in~\cite{degond_kinetic} leading to a variant of the model of~\cite{Aw2000916}. We mention also the recent~\cite{bellomo_bellouquid}, in which interactions at the microscopic scale are modeled by methods of game theory, thus leading to the derivation of mathematical models within the framework of the kinetic theory.

The FTL approximation scheme has been also used to approximate second order models for vehicular traffic such as ARZ~\cite{Aw2000916,Zhang2002275}. An important result in this sense is the one in~\cite{AKMR2002}. However, differently from~\cite{AKMR2002}, in our result we do not shrink the length of the vehicles to zero and we do not let the size of the highway or the number of vehicles under consideration tend to infinity. In fact, our approximation algorithm rather lets the number of particles (platoons of possible fractional vehicles) under consideration tend to infinity, but keeps both the length of the domain (highway) and the total mass (total number of vehicles) $L$ constant. Finally, another important difference from~\cite{AKMR2002} is that our approach allows to handle \emph{vacuum regions}. This introduces further technical difficulties that are rigourously treated and solved in the present paper.

%The approximation of first order models in vehicular traffic via discrete models in a rigorous and constructive form is one of the motivating goal of our work. However, vehicular traffic is only one of the contexts in which our result can be stated and applied.

\subsection{Structure of the paper}

The present paper is structured as follows. Section~\ref{sec:allpreliminaries} contains all the preliminary material and the statement of the main result. More precisely, we summarise the mathematical theory of the one-dimensional scalar conservation law~\eqref{eq:LWR_intro} in Section~\ref{sec:LWRmodel}, we define the (discrete) FTL~model~\eqref{eq:ODEintro} in detail in Section~\ref{sec:FTLmodel}. we recall the basics on the Wasserstein distance in one space dimension in Section~\ref{sec:preliminaries}, and we set up the approximating scheme and state our main result in Section~\ref{subsec:statement}. The precise statement of the main result is contained in Theorem~\ref{thm:main}. Section~\ref{sec:result} is devoted to its proof, and is split into the subsections~\ref{sec:convergence}, \ref{sec:Oleinik}, \ref{sec:compactness}, and~\ref{sec:convergence2}. More precisely, Section~\ref{sec:convergence} is devoted to the proof of the weak convergence of our approximating scheme, in Section~\ref{sec:Oleinik} we prove the two basic compactness estimates mentioned above, in Section~\ref{sec:compactness} we provide the needed time-continuity and prove strong compactness in $\Ll1$, and finally in Section~\ref{sec:convergence2} we prove that the limit of our approximating scheme is the unique entropy solution in the Oleinik-Kru{\v{z}}kov entropy sense~\cite{Kkruzkov,oleinik_discontinuous}.

\section{Preliminaries and statement of the main result}\label{sec:allpreliminaries}

\subsection{One-dimensional scalar conservation law}\label{sec:LWRmodel}

Consider the Cauchy problem for a scalar conservation law in one space dimension
\begin{subequations}\label{eq:LWRmodel}
\begin{align}\label{eq:LWR_intro1}
    &\rho_t + \left[\rho \,v(\rho)\right]_x=0,&&t>0,~ x\in\reali,
    \\ \label{eq:LWR_intro2}
    &\rho(0,x)= \bar\rho(x),&&x\in \reali,
\end{align}
\end{subequations}
where $\rho = \rho(t,x) \geq 0$ represents the density of the medium at the position $x \in\reali$ at time $t\ge0$, $v=v(\rho)$ is the local velocity and $\bar\rho$ is the initial datum, see e.g.~\cite{BressanBook,DafermosBook,SerreBook} as general references.
Regularity (and convexity) assumptions are typically required in the literature on the \emph{flux} function $f(\rho) \doteq \rho \,v(\rho)$.
However, \emph{our assumptions will be on the velocity $v$ rather than on the flux $f$}, see Section~\ref{subsec:statement}.

%A typical requirement in the classical theory of scalar conservation laws is that the \emph{flux} function $$f(\rho) \doteq \rho \,v(\rho)$$ is assumed to be concave (or convex) on the whole domain for the state variable $\rho$. We shall state our assumptions on $f$ in Section~\ref{subsec:statement}. More precisely, since we shall construct our solution $\rho$ via the discrete solution to~\eqref{eq:FTL_intro}, \emph{our assumptions will be on the velocity map $v$ rather than on the flux $f$}.

Let us briefly recall the main properties of the solutions to~\eqref{eq:LWRmodel}.
If the initial datum has compact support, then the support of any solution has finite speed of propagation. The maximum principle holds true, and if the initial datum is nonnegative, then the solutions remain nonnegative for all times. Moreover, the total (linear) mass $\int_\reali \rho(t,x) \, {\der} x$ is time independent: $\int_\reali  \rho(t,x) \, {\der} x = L \doteq \int_\reali  \bar \rho(x) \,  {\der} x$ for all $t\ge0$.

It is well known that the solutions to~\eqref{eq:LWRmodel} may develop discontinuities in a finite time, also for regular initial data. For this reason, one has to consider weak solutions $\rho$ to~\eqref{eq:LWRmodel}, more precisely $\rho$ in $\Ll\infty\left(\left[0,+\infty\right[; \Ll1\cap \Ll\infty\left(\reali\right)\right)$ that satisfy~\eqref{eq:LWRmodel} in the sense of distributions, namely
\begin{align}\label{eq:weaksol}
    &\int_{\reali}\int_{\reali_+} \left[\vphantom{\int}\rho(t,x) \,\varphi_t(t,x) + f\left(\rho(t,x)\right) \varphi_x(t,x)\right] {\der}t \,{\der}x +\int_{\reali}\bar\rho(x)\, \varphi(0,x)\,{\der}x=0
\end{align}
for all $\varphi \in \Cc\infty\left(\left[0,+\infty\right[ \times \reali; \reali\right)$. The choice of $\Ll1\cap \Ll\infty\left(\reali\right)$ as the functional space to deal with the $x$--regularity appears as the most reasonable one in order to obtain existence of weak solutions when the approximating procedure is performed via a vanishing viscosity argument, see e.g.~\cite[Section~6.3]{DafermosBook}. However, the space $\BV\left(\reali\right)$ is more reminiscent of the typical structure of solutions featuring shocks and rarefaction waves, and turns out to be a natural choice when the problem is e.g.~solved by the polygonal approximation algorithm also known as the wave-front tracking algorithm~\cite{Daf72}, see also~\cite{BressanBook} and the references therein.

It is well known that the notion of weak solution introduced above is not strong enough to provide uniqueness of solutions to~\eqref{eq:LWRmodel}. The concept of entropy solution formulated in~\cite{Kkruzkov,lax,oleinik_discontinuous} (see also~\cite{DafermosBook} and the references therein), provides the most natural and efficient way to single out a unique (physically relevant) solution to~\eqref{eq:LWRmodel}. Such concept can be be formulated in several ways, also depending on the regularity of $\rho$, the most general one being the one proposed by Kru{\v{z}}kov~\cite{Kkruzkov}, which holds for a reasonably wide class of fluxes (namely $\rho\mapsto f(\rho)$ being locally Lipschitz) and in arbitrary space dimension.

\begin{definition}[Entropy solutions]\label{def:entropy_intro}
    Assume that the flux $\rho\mapsto f(\rho)$ is locally Lipschitz. A function $\rho$ in $\Ll\infty\left(\left[0,+\infty\right[; \Ll1\cap \Ll\infty\left(\reali\right)\right)$ is an \emph{entropy solution} to~\eqref{eq:LWRmodel} if it satisfies the entropy inequality
    \begin{align}
      & \int_{\reali}\int_{\reali_+} \left[\vphantom{\int}\modulo{\rho(t,x) - k} \,\varphi_t(t,x) + \sgn\left(\rho(t,x) - k\right) \left[f\left(\rho(t,x)\right) - f(k)\right] \varphi_x(t,x)\right] {\der}t \,{\der}x \nonumber\\
      & \ + \int_{\reali} \modulo{\bar\rho(x) -k} \, \varphi(0,x) \,{\der}x \ge 0\label{eq:entropy_ineq}
    \end{align}
    for all $\varphi \in \Cc\infty\left(\left[0,+\infty\right[ \times \reali; \reali\right)$ with $\varphi\ge 0$, and for all constants $k\in \reali$.
\end{definition}\noindent
Clearly, any entropy solution is a weak solution to~\eqref{eq:LWRmodel} in the sense of~\eqref{eq:weaksol}. Moreover uniqueness follows from~\eqref{eq:entropy_ineq}.
\begin{theorem}[Kru{\v{z}}kov~\cite{Kkruzkov}]\label{thm:kruzkov}
Assume that the flux $f$ is locally Lipschitz. Then, for any given initial condition $\bar\rho$ in $\Ll\infty$ with compact support, there exists a unique entropy solution to~\eqref{eq:LWRmodel} in the sense of Definition~\ref{def:entropy_intro}.
\end{theorem}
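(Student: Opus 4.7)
Since Theorem~\ref{thm:kruzkov} is the classical result of Kru{\v{z}}kov, my plan is to follow Kru{\v{z}}kov's original strategy and split the proof into uniqueness (via the doubling-of-variables technique) and existence (via the vanishing viscosity method).

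For \emph{uniqueness}, I would take two entropy solutions $\rho_1,\rho_2$ of~\eqref{eq:LWRmodel} with the same initial datum. The key trick is to write the entropy inequality~\eqref{eq:entropy_ineq} for $\rho_1(t,x)$ with the constant $k$ frozen to $\rho_2(s,y)$, and symmetrically the inequality for $\rho_2(s,y)$ with $k=\rho_1(t,x)$, testing each one against a product test function of the form
\begin{equation*}
    \Phi(t,x,s,y)=\phi\bigl(\tfrac{t+s}{2},\tfrac{x+y}{2}\bigr)\,\eta_\delta\bigl(\tfrac{t-s}{2}\bigr)\,\eta_\delta\bigl(\tfrac{x-y}{2}\bigr),
\end{equation*}
where $\eta_\delta$ is a standard mollifier and $\phi\in\Cc\infty\left(\left[0,+\infty\right[\times\reali;\reali_+\right)$. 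Adding the two inequalities and letting $\delta\to 0$ (a computation that exploits the symmetry in $k$ of the Kru{\v{z}}kov entropy pair) would yield the distributional inequality
\begin{equation*}
    \partial_t \modulo{\rho_1-\rho_2}+\partial_x\Bigl[\sgn(\rho_1-\rho_2)\bigl(f(\rho_1)-f(\rho_2)\bigr)\Bigr]\le 0.
\end{equation*}
Integrating this against a test function adapted to the finite speed of propagation (using the local Lipschitz constant of $f$ on the range of the two solutions), and recovering the initial trace by concentrating the time variable near $t=0$, would produce the Kato $\Ll 1$ contraction $\norma{\rho_1(t,\cdot)-\rho_2(t,\cdot)}_{\Ll1}\le\norma{\rho_1(0,\cdot)-\rho_2(0,\cdot)}_{\Ll1}$, from which uniqueness follows immediately.

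For \emph{existence}, I would mollify the initial datum as $\bar\rho_\varepsilon\doteq\bar\rho *\eta_\varepsilon$ and solve the parabolic regularization
\begin{equation*}
    \rho^\varepsilon_t+\bigl[f(\rho^\varepsilon)\bigr]_x=\varepsilon\,\rho^\varepsilon_{xx},\qquad \rho^\varepsilon(0,\cdot)=\bar\rho_\varepsilon,
\end{equation*}
which is classically well posed and admits a smooth solution. The maximum principle yields a uniform $\Ll\infty$ bound, and multiplying the equation by $\sgn(\rho^\varepsilon-k)\,\varphi$ after a $\C2$ convex regularization of $|\cdot-k|$ gives a classical entropy inequality in which the viscous term has the right sign and can be discarded. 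To pass to the limit in~\eqref{eq:entropy_ineq} one needs strong $\Ll1_{loc}$ compactness: the cleanest route is to apply Kru{\v{z}}kov's doubling argument already at the parabolic level, comparing $\rho^\varepsilon(t,\cdot)$ with its spatial translates $\rho^\varepsilon(t,\cdot+h)$ to obtain equicontinuity in space uniformly in $\varepsilon$, and then deriving equicontinuity in time directly from the equation tested against a mollifier. The Fr\'echet--Kolmogorov criterion then delivers a subsequence converging in $\Ll1_{loc}$, and passing to the limit in the regularized entropy inequality produces an entropy solution.

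The \emph{main obstacle} is clearly the doubling-of-variables computation, which is delicate for two reasons: one must carefully track the symmetry cancellations between the two entropy inequalities when $\delta\to 0$, and one must recover the $\Ll1$ initial trace of an entropy solution from Definition~\ref{def:entropy_intro} (which encodes the initial datum only through the boundary term in~\eqref{eq:entropy_ineq}). A secondary technical issue is the choice of cutoff in $x$: to close the contraction globally on $\reali$ one must use a truncation $\psi_R(x)$ that expands in time at the maximum characteristic speed of $f$ on the common $\Ll\infty$ range of $\rho_1$ and $\rho_2$, so that the flux-term boundary contribution has a favorable sign and drops out.
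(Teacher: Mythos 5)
This theorem is quoted in the paper directly from Kru{\v{z}}kov's work and is not proved there, so there is nothing internal to compare against; your sketch follows exactly the canonical Kru{\v{z}}kov strategy (doubling of variables for the $\Ll1$ contraction and uniqueness, vanishing viscosity with uniform translation estimates and Fr\'echet--Kolmogorov compactness for existence), which is the correct and standard route. The outline is sound, including the two delicate points you flag: the symmetry cancellations as $\delta\to0$ and the recovery of the initial trace, which in Definition~\ref{def:entropy_intro} is encoded through the $\varphi(0,x)$ term of~\eqref{eq:entropy_ineq} and is indeed handled by concentrating the test function near $t=0$.
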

It is easy to check that any function $\rho$ satisfying the entropy inequality~\eqref{eq:entropy_ineq} satisfies also
the following property of (weak) $\Ll1$--continuity in time
\begin{equation*}
    \lim_{T\to0+} \frac{1}{T} \int_0^T \int_{\modulo{x}\le r} \modulo{\rho(t,x)-\bar\rho(x)} \,{\der}x \,{\der}t = 0
\end{equation*}
for all $r>0$. However, depending on the way we attempt at constructing entropy solutions, an important issue is related with detecting the trace at $t=0$ in a strong enough topology. This is often the case when the approximating scheme lacks of compactness when $t$ approaches zero. A theorem due to Chen and Rascle~\cite{chen_rascle} states that the uniqueness of the entropy solution is preserved also for a notion of entropy solution relaxed at $t=0$, provided the flux $f$ satisfies a.e.~a genuine nonlinearity condition.

\begin{theorem}[Chen and Rascle~\cite{chen_rascle}]\label{thm:chen}
Assume there exists no nontrivial interval on which $f$ is affine. If $\bar\rho$ is in $\Ll\infty$ and has compact support, then there exists a unique $\rho$ in $\Ll\infty\left(\left[0,+\infty\right[; \Ll1\cap \Ll\infty\left(\reali\right)\right)$ weak solution to~\eqref{eq:LWRmodel} in the sense of~\eqref{eq:weaksol} that satisfies also
\begin{align}
      & \int_{\reali}\int_{\reali_+} \!\!\left[\vphantom{\int}\modulo{\rho(t,x) - k} \,\varphi_t(t,x) + \sgn(\rho(t,x) - k)[f\left(\rho(t,x)\right) - f(k)] \,\varphi_x(t,x)\right]\! {\der}t \,{\der}x \!\ge 0\label{eq:entropy_ineq_notrace}
    \end{align}
for all $\varphi \in \Cc\infty\left(\left]0,+\infty\right[ \times \reali; \reali\right)$ with $\varphi\ge 0$, and for all constants $k\in \reali$. Moreover, $\rho$ is the unique entropy solution in the sense of Definition~\ref{def:entropy_intro}.
\end{theorem}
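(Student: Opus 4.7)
The plan is to reduce the uniqueness claim for the relaxed formulation to the already-established Kru{\v{z}}kov uniqueness in Theorem~\ref{thm:kruzkov}. The main task is therefore to show that any weak solution satisfying only~\eqref{eq:entropy_ineq_notrace} must also attain its initial datum in a strong enough sense to recover the full inequality~\eqref{eq:entropy_ineq}; once this is done, uniqueness and the identification with the Kru{\v{z}}kov entropy solution follow automatically.

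For existence, I would simply invoke Theorem~\ref{thm:kruzkov}: the Kru{\v{z}}kov entropy solution $\rho$ satisfies~\eqref{eq:entropy_ineq} for every nonnegative $\varphi \in \Cc\infty([0,+\infty[\times\reali;\reali)$. Restricting to test functions supported in $]0,+\infty[\times\reali$ makes the boundary integral disappear, so $\rho$ automatically satisfies~\eqref{eq:entropy_ineq_notrace}, which provides existence at no cost.

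For uniqueness, let $\rho\in\Ll\infty([0,+\infty[;\Ll1\cap\Ll\infty(\reali))$ be any weak solution of~\eqref{eq:weaksol} satisfying~\eqref{eq:entropy_ineq_notrace}. The key step is to establish strong convergence $\rho(t,\cdot) \to \bar\rho$ in $\Llloc1(\reali)$ as $t \to 0^+$, in two substeps. First, testing~\eqref{eq:weaksol} with separated variables $\varphi(t,x)=\phi(x)\chi(t)$, $\phi\in\Cc\infty(\reali)$, $\chi\in\Cc\infty([0,+\infty[)$, shows that $t \mapsto \int_\reali \rho(t,x)\phi(x)\,\der x$ admits a continuous representative on $[0,+\infty[$ with initial value $\int_\reali \bar\rho(x)\phi(x)\,\der x$, hence $\rho(t,\cdot)\to\bar\rho$ in $\mathcal{D}'(\reali)$ as $t\to 0^+$. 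Second, the hypothesis that $f$ is affine on no nontrivial interval is precisely the genuine-nonlinearity condition under which a bounded weak entropy solution of a scalar conservation law admits a strong $\Llloc1$ trace at the hyperplane $t=0$, by the Panov--Vasseur theorem on strong traces (alternatively obtainable via the Lions--Perthame--Tadmor kinetic formulation or via compensated compactness). Weak and strong $\Llloc1$ limits must coincide, so $\rho(t,\cdot)\to\bar\rho$ strongly in $\Llloc1(\reali)$. With this trace in hand, I would then test~\eqref{eq:entropy_ineq_notrace} with $\varphi(t,x)\psi_\epsilon(t)$, where $\psi_\epsilon\geq 0$ is a smooth cut-off vanishing at $t=0$ and equal to $1$ on $[\epsilon,+\infty[$, integrate by parts in $t$, and let $\epsilon\to 0^+$: using the strong trace, the contribution of $\psi_\epsilon'$ converges to the missing initial term $\int_\reali \modulo{\bar\rho(x)-k}\varphi(0,x)\,\der x$. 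Therefore $\rho$ satisfies~\eqref{eq:entropy_ineq}, and Theorem~\ref{thm:kruzkov} identifies it as the unique Kru{\v{z}}kov entropy solution.

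The main obstacle is the strong-trace step: it is the only place where the genuine-nonlinearity assumption on $f$ is essentially consumed, and it requires substantial machinery (blow-up analysis of Vasseur type, a kinetic formulation, or div--curl compensated compactness); without such a nonlinearity hypothesis, oscillations in the initial layer could persist and the full Kru{\v{z}}kov inequality would generally not be recoverable. All the remaining steps amount to routine manipulations of the weak and entropy formulations.
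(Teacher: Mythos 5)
The paper does not prove this statement at all: it is quoted from Chen and Rascle~\cite{chen_rascle} and used as a black box (its only role is to let the authors verify the entropy inequality with test functions vanishing at $t=0$ in Section~\ref{sec:convergence2}). So there is no internal proof to compare against, and your attempt has to be judged against the literature. In substance your outline is the standard (and essentially the original Chen--Rascle) strategy: existence comes for free from Theorem~\ref{thm:kruzkov}; for uniqueness one shows that genuine nonlinearity forces the initial layer to be trivial, i.e.\ the distributional trace at $t=0$ encoded in~\eqref{eq:weaksol} (your separated-variables argument, which is correct) is in fact attained strongly in $\Llloc1$, and then the cut-off argument with $\psi_\epsilon$ recovers the missing boundary term $\int_\reali \modulo{\bar\rho(x)-k}\,\varphi(0,x)\,\der x$, so that~\eqref{eq:entropy_ineq} holds and Kru\v{z}kov uniqueness identifies $\rho$. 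Since the statement being proved is itself an imported theorem, delegating the hard step to known trace results is a legitimate way to close the argument.

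One caveat on the step where the nonlinearity is consumed: Vasseur's blow-up theorem (and the kinetic averaging route) requires the non-degeneracy condition that $f'$ is not equal to any constant on a set of positive measure, together with extra smoothness of $f$; this is strictly stronger than the hypothesis stated here, namely that $f$ is affine on no nontrivial interval (e.g.\ $f'$ could be constant on a fat Cantor set). The tool that matches the stated hypothesis exactly in one space dimension is the compensated-compactness/entropy-dissipation argument, which is what Chen and Rascle actually use; you list it as an alternative, and it is the one you should make primary if you want the proof to cover precisely the assumption ``no nontrivial interval on which $f$ is affine''. (Panov's later refinements of the strong-trace theorem also relax the non-degeneracy assumption, so the trace route can be repaired, but as written the appeal to Vasseur does not quite match the hypothesis.) Apart from this mismatch, the remaining manipulations --- weak trace from~\eqref{eq:weaksol}, identification of weak and strong traces, and the $\psi_\epsilon$ limit --- are routine and correct.
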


Let us finally recall that, for $\C1$--fluxes $f$ which are concave or convex, another classical tool to uniquely determine all weak solutions by their $\Ll\infty$--initial values is the so called Oleinik-type condition~\cite{hoff83}
\begin{align}\label{eq:ole_intro_hoff}
    & \int_{\reali}\int_{\reali_+}f'(\rho(t,x)) \,\varphi_x(t,x)\,{\der}t \,{\der}x
    \ge - \int_{\reali}\int_{\reali_+} \frac{1}{t} \, \varphi(t,x)\,{\der}t \,{\der}x
\end{align}
for all $\varphi \in \Cc\infty\left(\left[0,+\infty\right[ \times \reali; \reali\right)$ with $\varphi\ge 0$, and for all $t>0$.
Moreover, if $f'$ has Lipschitz continuous inverse, then~\eqref{eq:ole_intro_hoff} implies that $\rho(t,\cdot)$ has locally bounded total variation for all $t>0$ even if the initial datum is not locally in $\BV$.

\subsection{Follow-the-leader model}\label{sec:FTLmodel}

Microscopic models of vehicular traffic are typically based on the so called Follow-The-Leader (FTL) model. On the other hand, such model makes sense in any context in which a set of discrete ordered agents moves with a velocity computed instantaneously as a function of the discrete density.

Consider $N+1$ ordered particles localised on $\reali$. Denote by $t \mapsto x_i(t)$ the position of the $i$--th particle for $i = 0, \ldots, N$. Then, according to the FTL~model, the evolution of the particles (which mimics the evolution of the position of $N+1$ vehicles along the road) is described inductively by the following Cauchy problem for an ODE system
\begin{subequations}\label{eq:FTL_intro}
\begin{align}\label{eq:FTL_intro1}
  &\dot{x}_N(t) = v_{\max} ,\\
  \label{eq:FTL_intro2}
  &\dot{x}_i(t) = v\left(\frac{\ell}{x_{i+1}(t)-x_i(t)}\right),&
  &i=0,\ldots,N-1,\\
  \label{eq:FTL_intro3}
  &x_i(0) = \bar{x}_i ,&
  &i=0,\ldots,N,
\end{align}
\end{subequations}
where $v$ is a $\C1$ strictly decreasing velocity map, $\bar{x}_0 <  \ldots < \bar{x}_N$ are the initial positions of the particles, and $\ell>0$ is the (linear) mass of each particle. We shall assume that $v$ is strictly decreasing and bounded from above. Then, $v_{\max} \doteq v(0)$ is the maximum possible velocity and is reached only by the leading particle $x_N$. Let us underline that in vehicular traffic the FTL model requires that all vehicles move towards a unique direction (i.e.~the vehicles move along a one way road), e.g.~$v$ is nonnegative. Our setting is more general and such an assumption is not required. Notice in particular that we shall not prescribe any constrain on the sign of $v_{\max}$.

We now set
\begin{equation*}
  R \doteq \max_{i=0,\ldots,N-1}\left(\frac{\ell}{\bar{x}_{i+1} - \bar{x}_{i}}\right).
\end{equation*}
The quantity $R>0$ is the \emph{maximum discrete density} at time $t=0$ and
\begin{align}\label{eq:FTL_assumption1}
    &\bar{x}_{i+1} - \bar{x}_{i} \ge \frac{\ell}{R},& i = 0, \ldots, N-1.
\end{align}

System~\eqref{eq:FTL_intro} can be solved inductively starting from $i=N$. Indeed, from~\eqref{eq:FTL_intro1}, we immediately deduce that
\begin{align*}
    x_{N}(t) = \bar{x}_{N} + v_{\max} \,t .
\end{align*}
Then, we can compute $t \mapsto x_i(t)$ once we know $t \mapsto x_{i+1}(t)$. In fact, according with the system~\eqref{eq:FTL_intro} the velocity of the $i$--th particle depends on its distance from the $(i+1)$--th particle alone via the smooth velocity map $v$. In order to ensure that the (unique) solution to~\eqref{eq:FTL_intro} exists globally in $t\geq 0$, we need to prove that the distances $x_{i+1}(t)-x_i(t)$ never degenerate. We can actually prove that the discrete density never exceeds the upper bound $R>0$ holding at $t=0$. This is proven in the next lemma, which improves a similar one in~\cite{Rossi}, which only holds in the case $v(\rho_{\max})=0$ for some maximal density $\rho_{\max}>0$, and in which the authors prove that the discrete density stays bounded by the threshold density $\rho_{\max}$. Our result is actually much stronger, because it basically proves that the discrete model satisfies the same maximum principle of the corresponding continuum model (see e.g.~\cite[Theorem 6.2.4]{DafermosBook}), i.e.~the discrete density at an arbitrary time $t>0$ is controlled by the supremum norm of the initial discrete density.

\begin{lemma}[Discrete maximum principle]\label{lem:1}
    For all $i = 0, \ldots, N-1$, we have
    \begin{align}\label{est:lem1}
        &\frac{\ell}{R} \le x_{i+1}(t)-x_i(t) \le \bar{x}_{N} - \bar{x}_{0} + (v_{\max}-v(R))\,t &\hbox{for all times }t\ge0.
    \end{align}
\end{lemma}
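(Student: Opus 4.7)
Introduce the gap functions $g_i(t) \doteq x_{i+1}(t)-x_i(t)$ for $i=0,\ldots,N-1$. The right-hand side of~\eqref{eq:FTL_intro} is of class $\C1$ on the open set where all the $g_i$ are strictly positive, so Picard--Lindel\"of provides a unique $\C1$ solution on a maximal interval $[0,T^*[$. The substantive claim is the lower bound in~\eqref{est:lem1}: once it is proved, collisions between consecutive particles are ruled out and standard ODE prolongation forces $T^*=+\infty$, while the upper bound follows by a short direct computation from the lower bound.

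\textbf{Lower bound.} From~\eqref{eq:FTL_intro} the gaps satisfy
\begin{align*}
    \dot g_i &= v\!\left(\frac{\ell}{g_{i+1}}\right) - v\!\left(\frac{\ell}{g_i}\right),\qquad i=0,\ldots,N-2,\\
    \dot g_{N-1} &= v_{\max}-v\!\left(\frac{\ell}{g_{N-1}}\right).
\end{align*}
I will show that $m(t)\doteq\min_{i}g_i(t)$ is non-decreasing, which combined with $m(0)\ge \ell/R$ (a rephrasing of~\eqref{eq:FTL_assumption1}) yields the lower bound. The crucial point is that whenever an index $j$ realises the minimum at some time $t$, the monotonicity of $v$ forces $\dot g_j(t)\ge 0$: if $j=N-1$ this is immediate since $v_{\max}=v(0)\ge v(\ell/g_{N-1}(t))$; if $j<N-1$ then $g_{j+1}(t)\ge g_j(t)$ gives $\ell/g_{j+1}(t)\le \ell/g_j(t)$, and hence $v(\ell/g_{j+1}(t))\ge v(\ell/g_j(t))$. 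Since $m$ is Lipschitz and, for $h>0$ sufficiently small, the minimum $m(t+h)$ is attained at some index $k(h)$ which already belonged to the arg-min at time $t$ (by continuity of each $g_i$), one has $m(t+h)-m(t)=g_{k(h)}(t+h)-g_{k(h)}(t)=h\,\dot g_{k(h)}(t)+o(h)\ge o(h)$, so the Dini derivative of $m$ is non-negative and $m$ is non-decreasing.

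\textbf{Upper bound.} Integrating~\eqref{eq:FTL_intro1} gives $x_N(t)=\bar x_N+v_{\max}\,t$. The lower bound just proved applied to $g_0$ yields $\ell/g_0(t)\le R$, and monotonicity of $v$ then gives $\dot x_0(t)=v(\ell/g_0(t))\ge v(R)$, whence $x_0(t)\ge \bar x_0+v(R)\,t$. Since all gaps are positive,
\[
g_i(t)\le \sum_{j=0}^{N-1}g_j(t)=x_N(t)-x_0(t)\le \bar x_N-\bar x_0+(v_{\max}-v(R))\,t.
\]
The only delicate point is that $m$ is merely Lipschitz, not $\C1$, so the statement ``$\dot m\ge 0$'' must be read in the Dini sense; equivalently, one can reason by contradiction at the first ``touching'' time $t_0$ with $m(t_0)=\ell/R$ by picking $j^*\doteq\max\{j\colon g_j(t_0)=\ell/R\}$: either $j^*=N-1$, in which case $\dot g_{j^*}(t_0)=v_{\max}-v(R)>0$ by the \emph{strict} monotonicity of $v$ and $R>0$, or $j^*<N-1$ with $g_{j^*+1}(t_0)>\ell/R$, again giving $\dot g_{j^*}(t_0)>0$ strictly, contradicting $\dot g_{j^*}(t_0)\le 0$.
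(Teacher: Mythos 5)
Your proof is correct, but it is organised differently from the paper's. The paper proves the lower bound by a backward recursion on the particle index: the base case $j=N-1$ follows by integrating $\dot g_{N-1}=v_{\max}-v(\ell/g_{N-1})\ge 0$, and the inductive step is a contradiction argument on a time interval $]t_1,t_2]$ on which the $j$-th gap would lie below $\ell/R$, comparing the integrated velocity of $x_j$ (at most $v(R)$ there) with that of $x_{j+1}$ (at least $v(R)$, thanks to the inductive control of the $(j+1)$-th gap); this stresses the transport structure of the FTL system, the bound being propagated from the leader backwards. You instead run a minimum principle on $m(t)=\min_i g_i(t)$: at any index realising the minimum the gap has non-negative derivative by the monotonicity of $v$, and your Dini-derivative argument with the active-index selection $k(h)$ is a complete and standard way to conclude that $m$ is non-decreasing. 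Combined with $m(0)\ge\ell/R$, the maximal-interval/prolongation remark, and your derivation of the upper bound (which, as in the paper, uses the lower bound through $\dot x_0\ge v(R)$), this proves the lemma. Your route is somewhat more compact and makes transparent that only $v$ non-increasing with $v\le v_{\max}$ is needed, while the paper's recursion highlights the "information flows from the leader" mechanism that the authors emphasise after the proof.

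One caveat: the closing "equivalently, one can reason by contradiction at the first touching time" sketch is not equivalent and does not work as stated. Nothing forces $\dot g_{j^*}(t_0)\le 0$: the gap that drops below $\ell/R$ just after $t_0$ need not be $g_{j^*}$ when several gaps attain the value $\ell/R$ simultaneously (this already happens at $t=0$, where equality holds for at least one index by the very definition of $R$), and if one instead picks an index that really does dip below $\ell/R$ right after $t_0$, its right neighbour may also sit at $\ell/R$, so one only gets $\dot g_i(t_0)=0$ and no contradiction. Since the Dini argument you gave first is self-contained, this aside is harmless, but it should be deleted or replaced by an argument that attaches the contradiction interval to a specific index whose right neighbour is already controlled — which is precisely what the paper's backward induction does.
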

\begin{proof}
The upper bound is obvious. Hence, it suffices to prove the lower bound.
At time $t=0$ the lower bound is satisfied because of~\eqref{eq:FTL_assumption1}. We shall prove that
\begin{align}\label{eq:maximum_discrete_statement}
&\sup_{t\geq 0} \left [x_{j+1}(t)-x_j(t)\right ] \geq \frac{\ell}{R},&
&j=0,\ldots,N-1,
\end{align}
by a recursive argument on $j$. The statement is true for $j=N-1$. Indeed,
\begin{align*}
   x_N(t)-x_{N-1}(t)& = \bar{x}_N-\bar{x}_{N-1} + \int_0^t \left[v_{\max}-v\left(\frac{\ell}{x_N(s)-x_{N-1}(s)}\right)\right] {\der} s\\
  & \geq \bar{x}_N-\bar{x}_{N-1}\geq \frac{\ell}{R},
\end{align*}
because $v(\rho)\leq v_{\max}$ for all $\rho\geq 0$. Assume now that
\begin{equation}\label{eq:maximum_principle_induction}
   \sup_{t\geq 0} \left[x_{j+2}(t)-x_{j+1}(t)\right] \geq \frac{\ell}{R}.
\end{equation}
Assume by contradiction that there exists $j \in \left\{0,\ldots,N-2\right\}$ and $t_2>t_1\ge0$ such that
\[x_{j+1}(t_1)-x_j(t_1) = \frac{\ell}{R}\]
and
\begin{align}\label{eq:contradiction_maximum}
&x_{j+1}(t)-x_j(t) < \frac{\ell}{R}&
&\hbox{for all }t \in \left]t_1, t_2\right].
\end{align}
Since $v$ is strictly decreasing, we have for any $t \in \left]t_1, t_2\right]$
\begin{equation}\label{eq:basic}
x_j(t)
=
x_j(t_1) + \int_{t_1}^t v\left(\frac{\ell}{x_{j+1}(t)-x_j(t)}\right) {\der} t
\leq
x_j(t_1) + v(R) \, (t-t_1).
\end{equation}
By~\eqref{eq:maximum_principle_induction} we have for any $t \in \left]t_1, t_2\right]$
\[
x_{j+1}(t)
=
x_{j+1}(t_1) + \int_{t_1}^t v\left(\frac{\ell}{x_{j+2}(t)-x_{j+1}(t)}\right) {\der} t
\geq
x_{j+1}(t_1) + v(R) \, (t-t_1)\,,
\]
and therefore by~\eqref{eq:basic}
\[
x_{j+1}(t) - x_{j}(t)
\geq
x_{j+1}(t_1) - x_{j}(t_1) = \frac{\ell}{R},
\]
which contradicts~\eqref{eq:contradiction_maximum}. Hence, \eqref{eq:maximum_discrete_statement} is satisfied and the assertion is proven.
\end{proof}
We emphasise that the above discrete maximum principle is a direct consequence of the \emph{transport} nature behind the FTL system \eqref{eq:FTL_intro}. Indeed, the global bound for the discrete density is propagated from the last particle $x_N$ back to all the other particles, as emphasised by the recursive argument in the proof of Lemma \ref{lem:1}.

\subsection{Notation and preliminaries on measure distances}\label{sec:preliminaries}

In this section we recall basic properties of pseudo-inverse operators and on the one-dimensional Wasserstein distance that we shall use extensively in the rest of the paper. We defer to~\cite{villani} for further details.

For a fixed $L>0$, introduce the pseudo-inverse operators
\begin{align*}
    & \mathcal{X}\colon \Ll\infty\left(\reali;\left[0,L\right]\right) \rightarrow \Ll\infty\left(\left[0,L\right[;\reali\right),\\
    & \mathcal{F}\colon \Ll\infty\left(\left[0,L\right[;\reali\right) \rightarrow \Ll\infty\left(\reali;\left[0,L\right]\right),
\end{align*}
defined by
\begin{align*}
    &\mathcal{X}\left[F\right](z)  \doteq  \inf\left\{ x \in \reali \,\colon\, F(x)>z \right\}&\hbox{ for }&
    z \in \left[0,L\right[,
    \\
    &\mathcal{F}\left[X\right](x)  \doteq  \textrm{meas}\left\{ z \in \left[0,L\right] \colon\, X(z)\le x \right\}&\hbox{ for }&
    x\in\reali,
\end{align*}
and consider the space
\begin{equation}\label{eq:ML}
  \mathcal{M}_L\doteq\left\{\rho \hbox{ Radon measure on $\reali$ with compact support} \,\colon\, \rho\ge 0,\;\rho(\reali)=L\right\}.
\end{equation}
For a given $\rho\in \mathcal{M}_L$, we denote $x_{\min}^\rho \doteq \min\left(\spt (\rho)\right)$ and $x_{\max}^\rho \doteq \max\left(\spt (\rho)\right)$, and by $F_\rho \,\colon\, \reali \to [0,L]$ its cumulative distribution, namely
\[F_\rho(x) \doteq \rho\left(\left]-\infty,x\right]\right).\]
We observe that $F_\rho \in \Ll\infty\left(\reali;\left[0,L\right]\right)$ is non-decreasing, right-continuous with $F_\rho(x)=0$ for all $x< x_{\min}^\rho$ and $F_\rho(x)=L$ for all $x\ge x_{\max}^\rho$. Therefore we can define its pseudo-inverse $X_\rho \doteq \mathcal{X}\left[F_\rho\right]$. Clearly, $X_\rho \in \Ll\infty\left(\left[0,L\right[;\left[ x_{\min}^\rho, x_{\max}^\rho\right]\right)$ is non-decreasing, right-continuous with $X_\rho(0)=x_{\min}^\rho$. By abuse of notation, we shall adopt the notation $\rho$ to denote an absolutely continuous measure in $\mathcal{M}_L$ with $\Ll1$--density $\rho$. We recall the following lemma (see e.g.~\cite{villani}).

\begin{lemma}[Change of variable]\label{lem:change_of_variable}
If $\rho \in \mathcal{M}_L$, then for all $\varphi\in \C0(\reali;\reali)$ we have
\begin{equation*}
  \int_\reali  \varphi(x) \,{\der}\rho(x)= \int_0^L \varphi\left(X_\rho(z)\right)  {\der}z.
\end{equation*}
\end{lemma}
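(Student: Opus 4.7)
The plan is to establish that the pseudo-inverse $X_\rho$ transports the restriction of the Lebesgue measure on $[0,L)$ onto the measure $\rho$, and then derive the stated identity from the standard change-of-variable formula for pushforward measures. Concretely, I would first prove, for every fixed $x\in\reali$, the measure identity
\begin{equation*}
\textrm{meas}\bigl\{z\in[0,L) \,\colon\, X_\rho(z)\le x\bigr\} = F_\rho(x) = \rho\bigl(\left]-\infty,x\right]\bigr).
\end{equation*}

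To prove this identity, I would show the two-sided containment
\begin{equation*}
\bigl\{z\in[0,L)\,\colon\, z<F_\rho(x)\bigr\} \;\subseteq\; \bigl\{z\in[0,L)\,\colon\, X_\rho(z)\le x\bigr\} \;\subseteq\; \bigl\{z\in[0,L)\,\colon\, z\le F_\rho(x)\bigr\}.
\end{equation*}
The left inclusion is immediate from the definition of $X_\rho$: if $z<F_\rho(x)$, then $x$ belongs to $\{y\,\colon\, F_\rho(y)>z\}$, hence $X_\rho(z)\le x$. For the right inclusion, suppose $X_\rho(z)\le x$. Then for every $\epsilon>0$ we can find $y<x+\epsilon$ with $F_\rho(y)>z$, so monotonicity of $F_\rho$ gives $F_\rho(x+\epsilon)>z$; the right-continuity of $F_\rho$ yields $F_\rho(x)\ge z$ as $\epsilon\to 0^+$. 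Since both outer sets have Lebesgue measure equal to $F_\rho(x)$, the middle set does as well. This shows that the Borel measure $X_\rho{}_{\#}\bigl(\textrm{Leb}|_{[0,L)}\bigr)$ agrees with $\rho$ on all half-lines $\left]-\infty,x\right]$, and therefore they coincide as Radon measures on $\reali$ (both being supported on the compact set $[x^\rho_{\min},x^\rho_{\max}]$).

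Once the pushforward identity is secured, the conclusion follows from a standard approximation argument. For $\varphi\in\C0(\reali;\reali)$ restricted to the compact set $\spt(\rho)$, one can uniformly approximate $\varphi$ by a sequence of finite linear combinations of indicators $\chi_{\left]-\infty,x_k\right]}$; the identity holds trivially for these indicators thanks to the preceding step, extends by linearity to the approximations, and passes to the limit by the uniform convergence together with the finiteness of $\rho$ and of the Lebesgue measure of $[0,L)$. I expect no serious obstacle here: the only delicate point is the boundary case in the equivalence $X_\rho(z)\le x \Leftrightarrow z<F_\rho(x)$, which may fail at atoms of $\rho$ or plateaus of $F_\rho$; however, these anomalies occur on at most a countable set of values of $x$ or on flat intervals whose images under $X_\rho$ contribute only on a Lebesgue-null set of $z$, and are thus harmless for the integral identity.
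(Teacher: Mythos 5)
Your proof is correct. Note that the paper does not actually prove this lemma---it only cites Villani's book---so there is no in-paper argument to compare with; what you give is exactly the standard argument behind that citation: the sandwich $\{z<F_\rho(x)\}\subseteq\{X_\rho(z)\le x\}\subseteq\{z\le F_\rho(x)\}$ (left inclusion by definition of the infimum, right inclusion by monotonicity and right-continuity of $F_\rho$) shows that $X_\rho$ pushes the Lebesgue measure on $[0,L[$ forward to $\rho$, since the two finite measures agree on all half-lines $\left]-\infty,x\right]$ and have the same total mass $L$, and the stated identity is then the change-of-variable formula for pushforward measures (your approximation of $\varphi$ by step functions built from half-line indicators, uniformly on $[x^\rho_{\min},x^\rho_{\max}]$, is a perfectly fine way to finish). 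One minor remark: your closing caveat about atoms of $\rho$ and plateaus of $F_\rho$ is superfluous---the two bounding sets in your sandwich differ by at most the single point $z=F_\rho(x)$, so the measure identity holds exactly for every $x$ and no exceptional null set needs to be invoked.
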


We recall that, for $L=1$, the one-dimensional \emph{$1$--Wasserstein distance} between $\rho_1,\rho_2\in \mathcal{M}_1$ (defined in terms of optimal plans in the Monge-Kantorovich problem, see e.g.~\cite{villani}) can be defined as
\begin{equation*}
    d_1(\rho_1,\rho_2)\doteq\norma{F_{\rho_1}-F_{\rho_2}}_{\Ll1(\reali;\reali)} = \norma{X_{\rho_1}-X_{\rho_2}}_{\Ll1([0,1];\reali)}.
\end{equation*}
For a general strictly positive $L$, we introduce the \emph{scaled $1$--Wasserstein distance} between $\rho_1,\rho_2\in \mathcal{M}_L$ as
\begin{equation}\label{eq:wass_equiv0}
    d_{L,1}(\rho_1,\rho_2)\doteq\norma{F_{\rho_1}-F_{\rho_2}}_{\Ll1(\reali;\reali)} = \norma{X_{\rho_1}-X_{\rho_2}}_{\Ll1([0,L];\reali)} .
\end{equation}
Indeed, straightforward computation yields
\begin{equation*}
    d_{L,1}(\rho_1,\rho_2) = L \, d_1(\rho_1/L,\rho_2/L).
\end{equation*}
The distance $d_{L,1}$ inherits all the topological properties of the $1$--Wasserstein distance for probability measures. In particular, a sequence $(\rho_n)_{n\in\N}$ in $\mathcal{M}_L$ converges to $\rho\in \mathcal{M}_L$ in $d_{L,1}$ if and only if
\begin{equation*}
    \lim_{n\to+\infty}\int_\reali \varphi(x) \,{\der}\rho_n(x) = \int_\reali \varphi(x) \,{\der}\rho(x) ,
\end{equation*}
for all $\varphi\in \C0(\reali;\reali)$ growing at most linearly at infinity.

\subsection{Statement of the main result}\label{subsec:statement}

In this subsection we state our main result, which provides a rigorous description of the unique entropy solution $\rho$ to the Cauchy problem~\eqref{eq:LWRmodel} as the limit for $N$ that goes to infinity of a density associated to the microscopic model~\eqref{eq:FTL_intro} to be constructed as described below.

We shall work under the standing assumption on the initial datum
\begin{enumerate}[(InBV)]
  \item[(In)] The initial datum $\bar\rho$ is in $\mathcal{M}_L \cap \Ll\infty(\reali)$,
\end{enumerate}
where $\mathcal{M}_L$ is defined in \eqref{eq:ML}. In some cases we shall require the stronger condition
\begin{enumerate}[(InBV)]
  \item[(InBV)] The initial datum $\bar\rho$ is in $\mathcal{M}_L \cap \BV(\reali)$.
\end{enumerate}

As for the velocity function $v$, we shall require throughout the paper:
\begin{enumerate}[(InBV)]
        \item[(V1)] $v\in \C1([0,+\infty[)$, $v$ strictly decreasing on $[0,+\infty[$.
        \item[(V2)] $v(0)=v_{\max}$ for some $v_{\max}\in \reali$.
\end{enumerate}
The $\C1$ assumption in (V1) is a minimal requirement for having a unique local solution to the system~\eqref{eq:FTL_intro}. The monotonicity assumption in (V1) is a natural one in traffic models. However, we shall see that it is technically relevant in many parts of our paper, in particular in Section~\ref{sec:Oleinik}. The assumption (V2) basically states that $v(0)$ is finite. We notice that the maximum principle for \eqref{eq:LWRmodel} and the discrete maximum principle in Lemma \ref{lem:1} imply, together with the monotonicity of $v$, that both $v(\rho)$ and $v(\ell/(x_{i+1}(t)-x_i(t)))$ are globally bounded in time by a constant depending on the $\Ll\infty$ norm of the initial datum. The assumption $v(0)<+\infty$ is a natural requirement in order to guarantee the existence of a solution to the FTL system~\eqref{eq:FTL_intro}. Indeed, if $v(0)=+\infty$, then the particle $x_N$ goes to $+\infty$ instantaneously after $t=0$, and so do all other particles. Therefore, the particle system cannot approximate the continuum equation \eqref{eq:FTL_intro} in the many particle limit, and the assumption $v(0)<+\infty$ is a necessary condition for our result.
Let us also remark that the assumption (V1) implies that $f(\rho)=\rho \, v(\rho)$ is not affine, and hence satisfies the hypotheses of Theorem \ref{thm:chen}.% Indeed, since $f(0)=0$, assuming by contradiction that $f$ is affine we get $f(\rho)=\rho \, v(\rho)= A\rho$, i.e.~$v$ is constant, which contradicts with (V1).

In some cases, we shall use the stronger assumption
\begin{enumerate}[(InBV)]
 \item[(V3)] The map $[0,+\infty[\,\, \ni\rho\,\mapsto \, \rho\,v'(\rho) \in [0,+\infty[$ is non increasing.
\end{enumerate}
Notice that the assumption~(V3) implies in particular that the flux $\rho\mapsto f(\rho)=\rho \, v(\rho)$ is concave. Indeed, since $f'(\rho)=v(\rho)+ \rho \, v'(\rho)$, $f'$ is the sum of two non-increasing functions if (V3) is satisfied. The assumption (V3) is clearly a stricter requirement than the concavity of $f$; on the other hand, (V3) is verified by many examples of velocities arising in traffic flow models.

\begin{example}[Examples of velocities in vehicular traffic]\label{rem:example_velocities}
In vehicular traffic, a maximal density $\rho_{\max}>0$ is prescribed, at which all vehicles are bumper to bumper. Typically, $\rho$ in \eqref{eq:LWRmodel} represents a normalised density, and one can assume $\rho_{\max}=1$.
The prototype for the velocity in vehicular traffic $v(\rho) = v_{\max} \left(1-\rho\right)$ by Greenshields~\cite{Greenshields} clearly satisfies the assumptions~(V1), (V2), (V3). The same holds for the Pipes-Munjal velocity~\cite{pip}
\begin{align*}
    &v(\rho) = v_{\max} \, \left(1 - \rho^\alpha\right) \qquad \alpha>0,
\end{align*}
in which the concavity of the flux $\rho \, v(\rho)$ degenerates at $\rho=0$, and for the Underwood model~\cite{Underwood}
\begin{align*}
    &v(\rho) = v_{\max}\, e^{-\rho} .
\end{align*} A further example of speed-density relations that satisfy~(V1), (V2), (V3) are
\begin{align*}
    &v(\rho) = v_{\max} \, \left[\log\left(\frac{1}{\alpha}\right)\right]^{-1} \log\left(\frac{1}{\rho+\alpha}\right) ,&
    &\alpha >0,
\end{align*}
that result from a slight modification of the Greenberg model~\cite{Greenberg}
\end{example}

We now introduce our atomization scheme.
Denote by $\bar{x}_{\min} < \bar{x}_{\max}$ the extremal points of the convex hull of the support of $\bar\rho$, namely $\bigcap_{\left[a,b\right]\supseteq\spt\left(\bar\rho\right)} \left[a,b\right] = \left[\bar{x}_{\min}, \bar{x}_{\max}\right]$.
Fix $n \in \naturali$ sufficiently large. Let $L \doteq \norma{\bar\rho}_{\Ll1(\reali)} > 0$ and split the subgraph of $\bar \rho$ in $N_n \doteq 2^n$ regions of measure $\ell_n \doteq 2^{-n}L$ as follows. Set
\begin{subequations}\label{eq:initial_ftl}
\begin{equation}
  \bar{x}_0^n \doteq \bar{x}_{\min},
\end{equation}
and recursively
\begin{align}
  &\bar{x}_i^n \doteq \sup\left\{x\in \reali \, \colon\,\int_{\bar{x}^n_{i-1}}^x\bar\rho(y) \,{\der}y<\ell_n\right\},&
  i=1,\ldots,N_n.
\end{align}
\end{subequations}
It is easily seen that $\bar{x}^n_{N_n}=\bar{x}_{\max}$, and $\bar{x}_{N_n-i}^n = \bar{x}_{N_{n+m}-2^mi}^{n+m}$ for all $i=0,\ldots,N_n$. Since we are always assuming that $\bar \rho \in \Ll\infty(\reali)$, let us set
\[R\doteq \norma{\bar \rho}_{\Ll\infty(\reali)}.\]
We have
\begin{align*}
  &\ell_n=\int_{\bar{x}_i^n}^{\bar{x}_{i+1}^n} \bar\rho(y) \,{\der}y \le \left (\bar{x}_{i+1}^n - \bar{x}_{i}^n\right ) R ,&
  i=0,\ldots,N_n-1.
\end{align*}
Thus the condition~\eqref{eq:FTL_assumption1} is satisfied with $\ell=\ell_n$, and we can take the values $\bar{x}_0^n,\ldots,\bar{x}_{N_n}^n$ as the initial positions of the $(N_n+1)$ particles in the $n$--depending version of the FTL model~\eqref{eq:FTL_intro}
\begin{subequations}\label{eq:ftl}
\begin{align}
    \label{eq:ftl1}
    &\dot x_{N_n}^n(t) = v_{\max} ,\\
    \label{eq:ftl2}
    &\dot x_i^n(t) = v \left(\frac{\ell_n}{x_{i+1}^n(t) - x_i^n(t)}\right) , && i=0, \ldots, N_n-1 ,\\
    \label{eq:ftl3}
    &x_i^n(0) = \bar{x}_i^n ,&& i=0, \ldots, N_n .
\end{align}
\end{subequations}
The existence of a global-in-time solution to~\eqref{eq:ftl} follows from Lemma~\ref{lem:1}. Moreover, from~\eqref{eq:ftl1} we immediately deduce that
\begin{align*}
    x_{N_n}^n(t) = \bar{x}_{\max} + v_{\max} \,t .
\end{align*}
Finally, since $v$ is decreasing, and its argument $\ell/[x_{i+1}^n(t)-x_i^n(t)]$ is always bounded above by $R$, we have
\[x_0^n(t)\geq \bar{x}_{\min} + v(R) \, t.\]
We stress once again that $v(R)$ may be negative.

By introducing in~\eqref{eq:ftl} the new variable
\begin{align}\label{eq:Deltan}
    &y^n_i(t) \doteq \frac{\ell_n}{x_{i+1}^n(t) - x_i^n(t)} ,&
    i=0,\ldots,N_n-1,
\end{align}
we obtain
\begin{subequations}\label{eq:dyi}
\begin{align}\label{eq:dyi1}
    &\dot{y}^n_{N-1} = -\frac{(y^n_{N-1})^2}{\ell_n} \left[v_{\max} - v(y^n_{N-1})\right],
    \\ \label{eq:dyi2}
    & \dot{y}^n_i = -\frac{(y^n_i)^2}{\ell_n} \left[v(y^n_{i+1})-v(y^n_i)\right], && i=0, \ldots, N_n-2 ,
    \\ \label{eq:dyi3}
    &y^n_i(0) = \bar{y}^n_i \doteq \frac{\ell_n}{\bar{x}_{i+1}^n - \bar{x}_i^n} ,&&
    i=0,\ldots,N_n-1.
\end{align}
\end{subequations}
Observe that $\ell_n/\left[\bar{x}_{\max} - \bar{x}_{\min} + (v_{\max}-v(R)) \, t\right] \le y^n_i(t) \le R$ for all $t\ge0$ in view of Lemma~\ref{lem:1}. The quantity $y^n_i$ can be seen as a discrete version of the density $\rho$ in Lagrangian coordinates, and the ODEs~\eqref{eq:dyi1}--\eqref{eq:dyi2} are a discrete Lagrangian version of the scalar conservation law~\eqref{eq:LWR_intro1}.

We are now ready to state the main result of this paper.
\begin{theorem}\label{thm:main}
    Let $\bar\rho$ satisfy the condition~(In) and $v$ the condition~(V1) and (V2). Assume further that either
    \begin{enumerate}
      \item[$\bullet$] $\bar\rho$ satisfies (InBV),
    \end{enumerate}
    or
    \begin{enumerate}
      \item[$\bullet$] $v$ satisfies (V3).
    \end{enumerate}
    Define the piecewise constant (with respect to $x$) density
    \begin{align}\label{eq:hrn}
        &\hat{\rho}^n(t,x) \doteq
        \sum_{i=0}^{N_n-1} y^n_i(t) \, \caratt{\left[x_i^n(t), x_{i+1}^n(t)\right[}(x) ,
    \end{align}
    and the empirical measure
    \begin{align}\label{eq:trn}
        &\tilde{\rho}^n(t,x) \doteq \ell_n \sum_{i=0}^{N_n-1} \displaystyle\dirac{\strut{\textstyle x_i^n(t)}}(x).
    \end{align}
    Then the sequence $(\hat{\rho}^n)_{n\in\N}$ converges to the unique entropy solution $\rho$ of the Cauchy problem~\eqref{eq:LWRmodel} almost everywhere and in $\Llloc1\left(\left[0,+\infty\right[\times \reali\right)$. Moreover, the sequence $(\tilde{\rho}^n)_{n\in\N}$ converges to $\rho$ in the topology of $\Llloc1\left(\left[0,+\infty\right[;\; d_{L,1}\right)$.
\end{theorem}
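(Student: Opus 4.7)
The plan is to combine the compactness estimates of Section~\ref{sec:Oleinik} with a weak-formulation limit argument, and then invoke either the Kru\v{z}kov uniqueness in Theorem~\ref{thm:kruzkov} or the Chen--Rascle uniqueness in Theorem~\ref{thm:chen} to identify the limit. The structure will follow the roadmap outlined in the introduction: (weak) equivalence of $\hat\rho^n$ and $\tilde\rho^n$, strong $\Llloc1$ compactness of $\hat\rho^n$, identification of the limit as a weak solution, and verification of the entropy inequality.

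First I would reduce the convergence of the empirical measure $\tilde\rho^n$ to that of $\hat\rho^n$. Since the support of the $i$-th atom of $\tilde\rho^n$ coincides with the left endpoint of the interval where $\hat\rho^n \equiv y_i^n$, both objects share the same pseudo-inverse up to a piecewise-constant modification on intervals of $z$-length $\ell_n \to 0$. Lemma~\ref{lem:change_of_variable} and the scaled Wasserstein identity~\eqref{eq:wass_equiv0} then force $d_{L,1}(\hat\rho^n(t),\tilde\rho^n(t)) \le C\ell_n$ uniformly on compact time intervals, so it is enough to prove convergence of $\hat\rho^n$ in $\Llloc1$ and to transfer the limit through the Wasserstein inequality $d_{L,1}(\hat\rho^n,\rho) \le \|F_{\hat\rho^n}-F_\rho\|_{\Ll1}$, controlled by the $\Llloc1$ norm on any bounded support.

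The heart of the proof is strong $\Llloc1$ compactness of $\hat\rho^n$. In the $\BV$ case, Proposition~\ref{pro:compactness1} gives a uniform bound on the total variation of $\hat\rho^n(t,\cdot)$ propagated from the initial data via monotonicity of the scheme and the positivity of the velocity differences in~\eqref{eq:dyi2}. In the $\Ll\infty$ case under (V3), one instead proves a discrete one-sided estimate (Lemma~\ref{lem:oleinik}) of the form $y_{i+1}^n - y_i^n \le C/t$ or, equivalently, a one-sided Lipschitz bound on $v(y_i^n)$ in the space variable; differentiating the ratios $(y_i^n)^2(v(y_{i+1}^n)-v(y_i^n))/\ell_n$ in time and exploiting monotonicity of $\rho\mapsto \rho v'(\rho)$ yields a Riccati-type differential inequality whose supersolution is exactly of order $1/t$. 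This one-sided discrete Oleinik estimate, together with the uniform $\Ll\infty$ bound from Lemma~\ref{lem:1}, produces a uniform local $\BV_x$ bound of $\hat\rho^n(t,\cdot)$ for every $t>0$. Time equicontinuity in $\Ll1$ then follows from the ODEs~\eqref{eq:dyi} combined with the space $\BV$ bound, via a Kru\v{z}kov-type argument: $\|\hat\rho^n(t+h,\cdot) - \hat\rho^n(t,\cdot)\|_{\Ll1} \le C\,h\,\mathrm{TV}(\hat\rho^n(t,\cdot))$. Helly's theorem (or Aubin--Lions in the $\BV$ case) then delivers a subsequence converging in $\Llloc1$ and almost everywhere to some $\rho$.

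To identify $\rho$ as the entropy solution, I would pass to the limit in a discrete weak formulation: for any $\varphi \in \Cc\infty([0,+\infty[\times\reali)$, testing the ODEs \eqref{eq:ftl} against a Riemann-sum discretisation of $\varphi$ and summing by parts produces $\int\int\hat\rho^n\varphi_t + f(\hat\rho^n)\varphi_x\,\der t\,\der x + \int\hat\rho^n(0,\cdot)\varphi(0,\cdot)\,\der x = o(1)$, where the $o(1)$ collects commutator terms controlled by $\ell_n$ and by a modulus of continuity of $\varphi$. This shows that $\rho$ is a weak solution in the sense of~\eqref{eq:weaksol}. For the entropy inequality, in the $\BV$ case I would replicate this argument with the Kru\v{z}kov entropy-flux pair $(|\rho-k|,\mathrm{sgn}(\rho-k)(f(\rho)-f(k)))$, exploiting that the FTL scheme is monotone in the neighbour spacings (hence $\Ll1$-contractive along the discrete Lagrangian variable), so a discrete entropy inequality holds and survives the limit. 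In the $\Ll\infty$ case, (V1) implies $f$ is not affine on any interval, so Theorem~\ref{thm:chen} applies and it suffices to verify~\eqref{eq:entropy_ineq_notrace} for test functions supported in $t>0$, where the $\BV$ regularisation from the discrete Oleinik estimate is already available. Finally, uniqueness of the entropy solution forces the whole sequence (not just a subsequence) to converge, yielding the conclusion.

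The hard part will be the discrete Oleinik estimate under assumption~(V3): controlling the evolution of the forward differences $y_{i+1}^n - y_i^n$ through the quadratic nonlinearities in~\eqref{eq:dyi2} requires finding the right discrete Lyapunov functional and carefully exploiting monotonicity of $\rho\mapsto \rho v'(\rho)$, since the naive comparison breaks down at the boundary particle $x_{N_n}^n$ where equation~\eqref{eq:dyi1} has a different structure. The passage from the one-sided bound on $v(y_i^n)$ back to a two-sided $\BV$ bound will rely on the uniform $\Ll\infty$ bound of Lemma~\ref{lem:1} together with the monotone invertibility of $v$ guaranteed by (V1).
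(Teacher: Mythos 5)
There is a genuine gap at the compactness step, and it is exactly the point the paper has to work hardest to circumvent. In the $\Ll\infty$+(V3) case you claim that the discrete Oleinik estimate together with the ``monotone invertibility of $v$'' produces a uniform local $\BV$ bound on $\hat\rho^n(t,\cdot)$ itself. But Lemma~\ref{lem:oleinik} (combined with the $\Ll\infty$ bound of Lemma~\ref{lem:1}) only yields a one-sided, hence full, $\BV$ bound on $v(\hat\rho^n(t,\cdot))$ for $t\ge\delta$ (this is Proposition~\ref{pro:compactness}). Since (V1) allows $v'(0)=0$ (e.g.\ the Pipes--Munjal law $v=v_{\max}(1-\rho^\alpha)$ with $\alpha>1$, which satisfies (V3)), the inverse $v^{-1}$ is not Lipschitz near $v_{\max}$, so $\tv[v(\hat\rho^n)]\le C_\delta$ does \emph{not} transfer to a bound on $\tv[\hat\rho^n]$. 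Consequently your second claim --- the Kru\v{z}kov-type time estimate $\|\hat\rho^n(t+h)-\hat\rho^n(t)\|_{\Ll1}\le C\,h\,\tv[\hat\rho^n(t)]$ followed by Helly --- has nothing to feed on; indeed the authors state explicitly (concluding remarks) that they were unable to prove $\Ll1$ time equicontinuity of $\hat\rho^n$, and that when $v'$ may degenerate the naive strategy of passing the one-sided estimate on $\hat\rho^n_x$ to the limit fails. The paper's way out is structural: it keeps all strong estimates on $v(\check\rho^n)$, $v(\hat\rho^n)$, proves only \emph{Wasserstein} time equicontinuity of $\hat\rho^n$ (Proposition~\ref{pro:wass_continuity}), and invokes a generalized Aubin--Lions lemma (Theorem~\ref{pro:aubin}, after Rossi--Savar\'e) whose functional is $\norma{w(\nu)}_{\Ll1}+\tv[w(\nu)]$ with $w=v$ and whose pseudo-distance is $d_{L,1}$; compactness of $\hat\rho^n$ is then recovered by inverting $v$ only along a.e.\ convergent subsequences, never through a $\BV$ bound on $\hat\rho^n$. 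Your argument is fine in the (InBV) case, where Proposition~\ref{pro:compactness1} does give $\tv[\hat\rho^n(t)]\le\tv[\bar\rho]$, but that case alone does not prove the theorem.

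Two smaller points. First, your reduction of the $\tilde\rho^n$ statement to the $\hat\rho^n$ one via $d_{L,1}(\hat\rho^n(t),\tilde\rho^n(t))\le C\ell_n$ is correct (it is Step~2 of Proposition~\ref{pro:convergence1}), but note the paper in any case needs the Wasserstein/pseudo-inverse machinery independently, both to define the candidate limit and to run the Aubin--Lions argument, so you cannot dispense with it. Second, the discrete entropy inequality is asserted rather than proved: ``monotonicity of the scheme, hence $\Ll1$-contractivity, hence a discrete Kru\v{z}kov inequality'' is not a known off-the-shelf fact for the FTL system; the paper verifies \eqref{eq:entropy_ineq_notrace} by an explicit computation (decomposition into the terms $B_0$, $B_N$, $J_i$, $K_i$, $L$, $M$, sign analysis of $K_i$ using the monotonicity of $v$, and an error estimate on $J_i$ which again uses the uniform bound on $\tv[v(\hat\rho^n(t))]$ for $t\ge\delta$). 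That computation, or an equivalent substitute, has to be supplied; once it is, the appeal to Theorem~\ref{thm:chen} to identify the limit and upgrade subsequential to full convergence is the same as in the paper.
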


\section{Proof of the main result}\label{sec:result}

Our strategy for the proof of Theorem~\ref{thm:main} can be resumed as follows:
\begin{enumerate}[(i)]
  \item Following the notation introduced in Section~\ref{sec:preliminaries}, we set $\hat{F}^n=F_{\hat{\rho}^n}$ and $\hat{X}^n=\mathcal{X}[\hat{F}^n]$, respectively $\tilde{F}^n=F_{\tilde{\rho}^n}$ and $\tilde{X}^n=\mathcal{X}[\tilde{F}^n]$, as the cumulative distribution of $\hat{\rho}^n$, respectively $\tilde{\rho}^n$, and its pseudo inverses. Introduce the \emph{discrete Lagrangian density} $$\check\rho^n =\hat{\rho}^n \circ \hat{X}^n.$$
  \item As a first step we prove that the sequence of piecewise constant pseudo-inverse distributions $(\tilde{X}^n)_{n\in\N}$ has a strong limit $X$ in $\Llloc1([0,+\infty[\times [0,L];\!\reali)$, which is equivalent to having $(\tilde{\rho}^n)_{n\in\N}$ converging to a measure $\rho$ in the $\Llloc1([0,+\infty[;\,d_{L,1})$ topology. At the same time, we shall also prove that $(\hat{X}^n)_{n\in\N}$ converges strongly in $\Llloc1([0,+\infty[\times [0,L];\reali)$ to the same limit $X$, i.e.~$(\hat{\rho}^n)_{n\in\N}$ converges to $\rho$ in $\Llloc1([0,+\infty[;\,d_{L,1})$.
  \item We then prove that the limit pseudo-inverse function $X$ has difference quotients bounded below by~$1/R$. This fact allows to prove that the limit measure $\rho$ in (ii) is actually in $\Ll\infty$ and is a.e.~bounded by~$R$. At the same time, we easily infer weak--$*$ convergence of $(\check{\rho}^n)_{n\in\N}$ to a limit $\check{\rho}$ in $\Ll\infty$. It remains to prove that $\check{\rho}\circ F = \rho$, and that such limit is the unique entropy solution to~\eqref{eq:LWRmodel}. This requires stronger estimates on $\hat{\rho}^n$.
  \item A direct proof of a uniform $\BV$ estimate for $\hat{\rho}^n$ can be performed in the case of $\mathcal{M}_L \cap \BV$ initial datum. In the case of general $\mathcal{M}_L \cap \Ll\infty$ initial datum, and with $v$ satisfying (V3), we shall prove that the discrete Lagrangian density $\check{\rho}^n$ satisfies a (uniform) \emph{discrete version of the Oleinik condition}, which implies automatically a $\BV$ uniform estimate for $\check{\rho}^n$, and hence for $\hat{\rho}^n$.
  \item The definition of weak solution~\eqref{eq:weaksol} for $\rho$ follows from the $n\rightarrow +\infty$ limit of the formulation of~\eqref{eq:ftl} as a PDE
  \begin{equation}\label{eq:PDEn}
        \tilde{X}_t^n = v(\check{\rho}^n) .
  \end{equation}
  \item We finally recover the entropy condition~\eqref{eq:entropy_ineq} in the discrete setting, and use the strong $\Ll1$ compactness to pass it to the limit.
\end{enumerate}

\subsection{Weak convergence of the approximating scheme}\label{sec:convergence}

Throughout this subsection we shall assume that $v$ satisfies (V1) and (V2). Let $\hat{\rho}^n$ and $\tilde{\rho}^n$ be defined as in~\eqref{eq:hrn} and~\eqref{eq:trn} respectively. We have that $\hat{\rho}^n(t), \tilde{\rho}^n(t) \in \mathcal{M}_L$ for all $t\ge0$. Thus we can consider the cumulative distributions associated to $\hat{\rho}^n$ and $\tilde{\rho}^n$ (recall that $\tilde{\rho}^n$ is an empirical measure)
\begin{align*}
    &\hat{F}^n(t,x) \doteq \int_{-\infty}^x \hat{\rho}^n(t,y) \,{\der}y,&
    \tilde{F}^n(t,x) \doteq \tilde{\rho}^n(]-\infty,x]),
\end{align*}
and their pseudo-inverses
\begin{align*}
    &\hat{X}^n \doteq \mathcal{X}\left[\hat{F}^n\right],&
    \tilde{X}^n \doteq \mathcal{X}\left[\tilde{F}^n\right],
\end{align*}
extended to $z=L$ by taking $\hat{X}^n(t,L) = x_{N_n}^n(t) = \tilde{X}^n(t,L)$.
\begin{figure}[htpb]
      \centering\begin{psfrags}\tiny
      \psfrag{r}[r,c]{$\hat{\rho}$}
      \psfrag{F}[r,c]{$\hat F$}
      \psfrag{X}[r,c]{$\hat X$}
      \psfrag{1}[r,t]{$R$}
      \psfrag{F8}[r,c]{$L$}
      \psfrag{F7}[r,c]{$7L/8$}
      \psfrag{F6}[r,c]{$3L/4$}
      \psfrag{F5}[r,c]{$5L/8$}
      \psfrag{F4}[r,c]{$L/2$}
      \psfrag{F3}[r,c]{$3L/8$}
      \psfrag{F2}[r,c]{$L/4$}
      \psfrag{F1}[r,c]{$L/8$}
      \psfrag{x0}[l,B]{$x_{\!0}^{\,}$}
      \psfrag{x1}[l,B]{$x_{\!1}^{\,}$}
      \psfrag{x2}[l,B]{$x_{\!2}^{\,}$}
      \psfrag{x3}[l,B]{$x_{\!3}^{\,}$}
      \psfrag{x4}[l,B]{$\,x_{\!4}^{\,}$}
      \psfrag{x5}[l,B]{$x_{\!5}^{\,}$}
      \psfrag{x6}[l,B]{$x_{\!6}^{\,}$}
      \psfrag{x7}[l,B]{$x_{\!7}^{\,}$}
      \psfrag{x8}[l,B]{$x_{\!8}^{\,}$}
      \psfrag{x}[l,B]{$x$}
        \includegraphics[width=.3\textwidth]{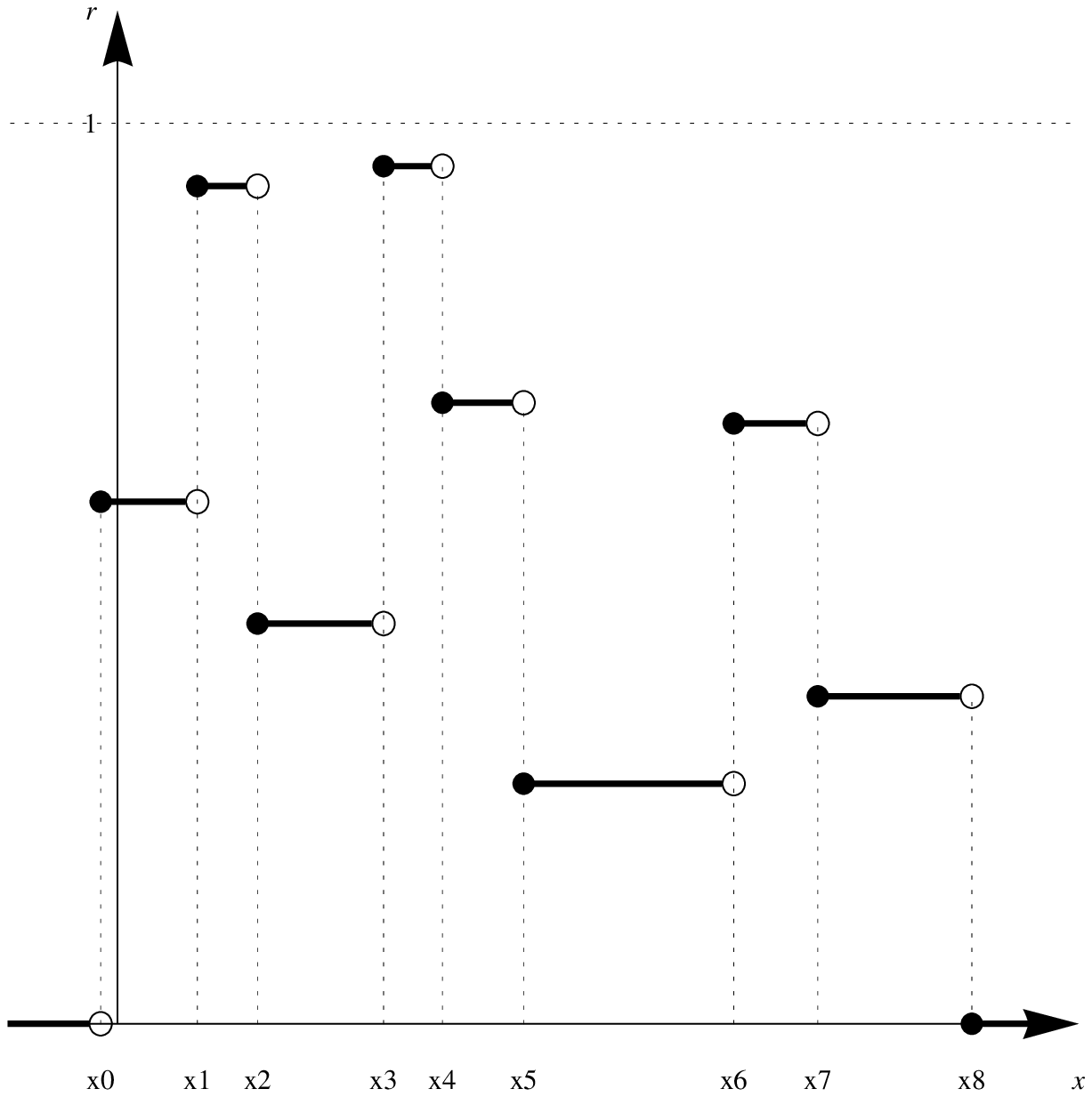}\qquad
        \includegraphics[width=.3\textwidth]{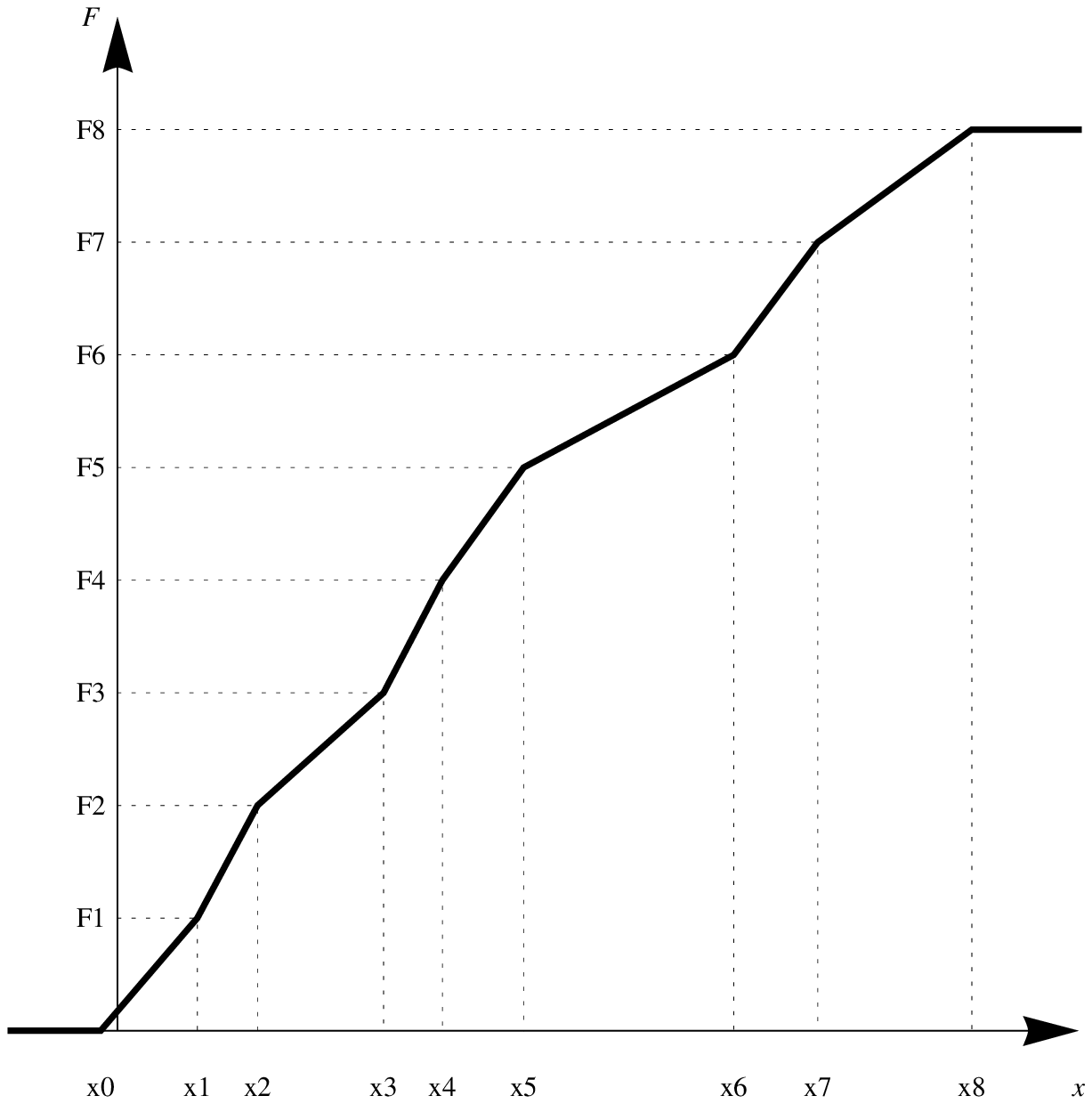}\qquad
      \psfrag{z}[l,t]{$z$}
      \psfrag{F8}[l,t]{$L$}
      \psfrag{F7}[l,t]{$\frac{7L}{8}$}
      \psfrag{F6}[l,t]{$\frac{3L}{4}$}
      \psfrag{F5}[l,t]{$\frac{5L}{8}$}
      \psfrag{F4}[l,t]{$\frac{L}{2}$}
      \psfrag{F3}[l,t]{$\frac{3L}{8}$}
      \psfrag{F2}[l,t]{$\frac{L}{4}$}
      \psfrag{F1}[l,t]{$\frac{L}{8}$}
      \psfrag{x0}[r,c]{$x_0$}
      \psfrag{x1}[r,c]{$x_1$}
      \psfrag{x2}[r,c]{$x_2$}
      \psfrag{x3}[r,c]{$x_3$}
      \psfrag{x4}[r,c]{$x_4$}
      \psfrag{x5}[r,c]{$x_5$}
      \psfrag{x6}[r,c]{$x_6$}
      \psfrag{x7}[r,c]{$x_7$}
      \psfrag{x8}[r,c]{$x_8$}
        \includegraphics[width=.3\textwidth]{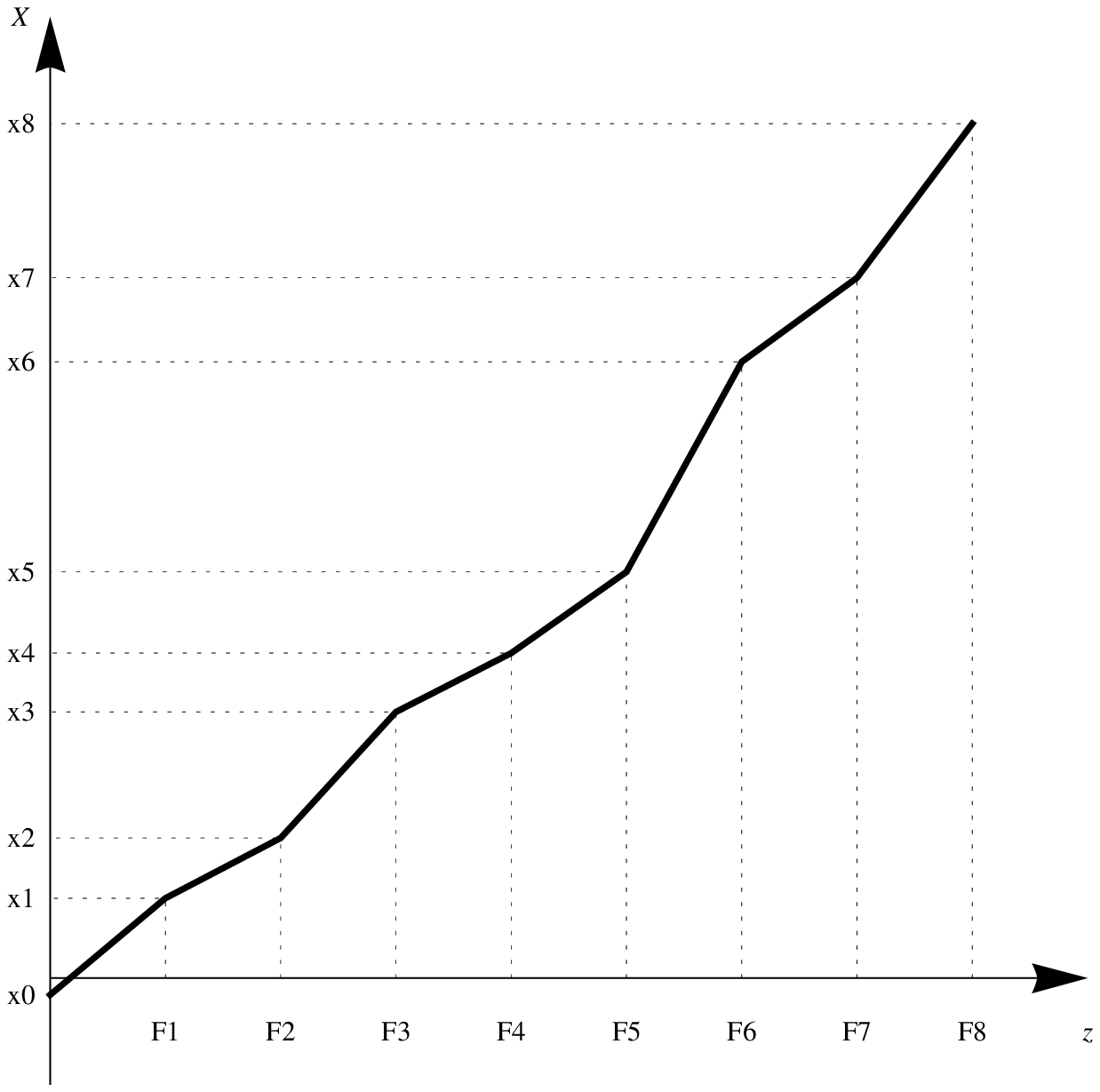}
      \end{psfrags}
      \caption{Maps of the form, respectively from the left, \eqref{eq:hrn}, \eqref{eq:hFn} and~\eqref{eq:hXn} with $n(=3)$ and $t(\ge0)$ omitted.}
\label{fig:Korn1}
\end{figure}
\begin{figure}[htpb]
      \centering\begin{psfrags}\tiny
      \psfrag{r}[r,c]{$\tilde\rho$}
      \psfrag{F}[r,c]{$\tilde F$}
      \psfrag{X}[r,c]{$\tilde{X}$}
      \psfrag{1}[r,t]{$1$}
      \psfrag{F8}[r,c]{$L$}
      \psfrag{F7}[r,c]{$7L/8$}
      \psfrag{F6}[r,c]{$3L/4$}
      \psfrag{F5}[r,c]{$5L/8$}
      \psfrag{F4}[r,c]{$L/2$}
      \psfrag{F3}[r,c]{$3L/8$}
      \psfrag{F2}[r,c]{$L/4$}
      \psfrag{F1}[r,c]{$L/8$}
      \psfrag{x0}[l,B]{$x_{\!0}^{\,}$}
      \psfrag{x1}[l,B]{$x_{\!1}^{\,}$}
      \psfrag{x2}[l,B]{$x_{\!2}^{\,}$}
      \psfrag{x3}[l,B]{$x_{\!3}^{\,}$}
      \psfrag{x4}[l,B]{$\,x_{\!4}^{\,}$}
      \psfrag{x5}[l,B]{$x_{\!5}^{\,}$}
      \psfrag{x6}[l,B]{$x_{\!6}^{\,}$}
      \psfrag{x7}[l,B]{$x_{\!7}^{\,}$}
      \psfrag{x8}[l,B]{$x_{\!8}^{\,}$}
      \psfrag{x}[l,B]{$x$}
        \includegraphics[width=.3\textwidth]{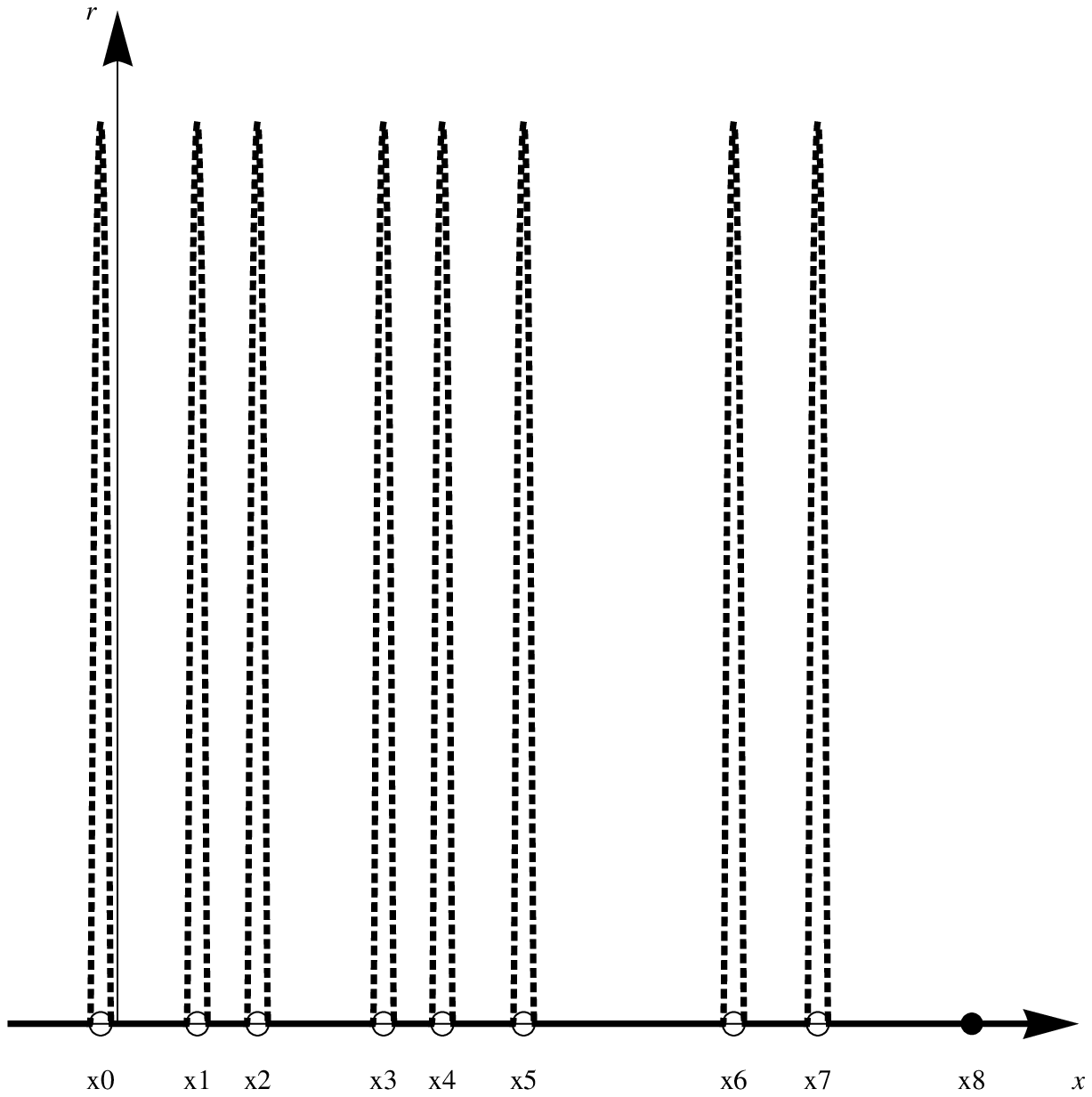}\qquad
        \includegraphics[width=.3\textwidth]{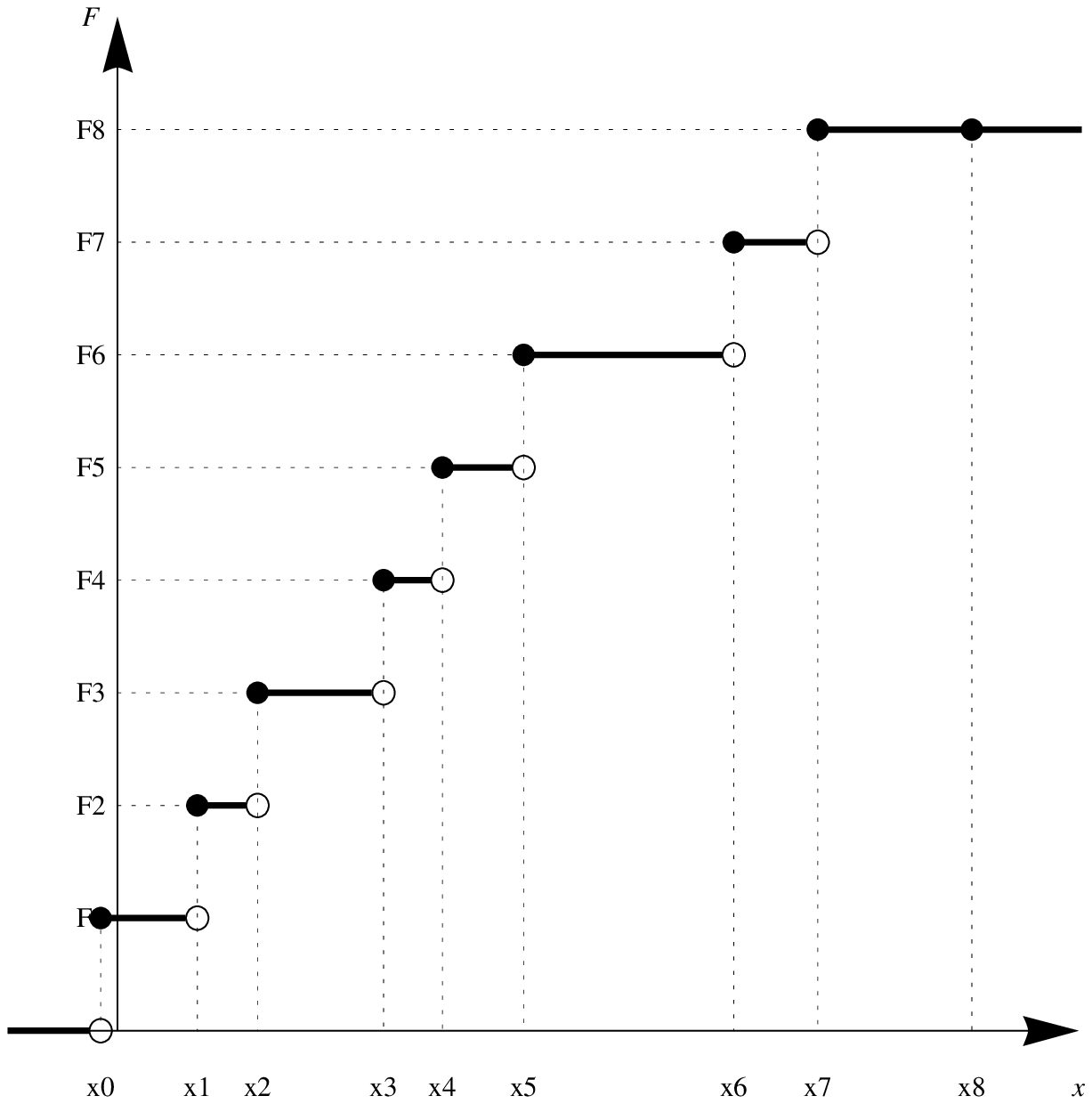}\qquad
      \psfrag{z}[l,t]{$z$}
      \psfrag{F8}[l,t]{$L$}
      \psfrag{F7}[l,t]{$\frac{7L}{8}$}
      \psfrag{F6}[l,t]{$\frac{3L}{4}$}
      \psfrag{F5}[l,t]{$\frac{5L}{8}$}
      \psfrag{F4}[l,t]{$\frac{L}{2}$}
      \psfrag{F3}[l,t]{$\frac{3L}{8}$}
      \psfrag{F2}[l,t]{$\frac{L}{4}$}
      \psfrag{F1}[l,t]{$\frac{L}{8}$}
      \psfrag{x0}[r,c]{$x_0$}
      \psfrag{x1}[r,c]{$x_1$}
      \psfrag{x2}[r,c]{$x_2$}
      \psfrag{x3}[r,c]{$x_3$}
      \psfrag{x4}[r,c]{$x_4$}
      \psfrag{x5}[r,c]{$x_5$}
      \psfrag{x6}[r,c]{$x_6$}
      \psfrag{x7}[r,c]{$x_7$}
      \psfrag{x8}[r,c]{$x_8$}
        \includegraphics[width=.3\textwidth]{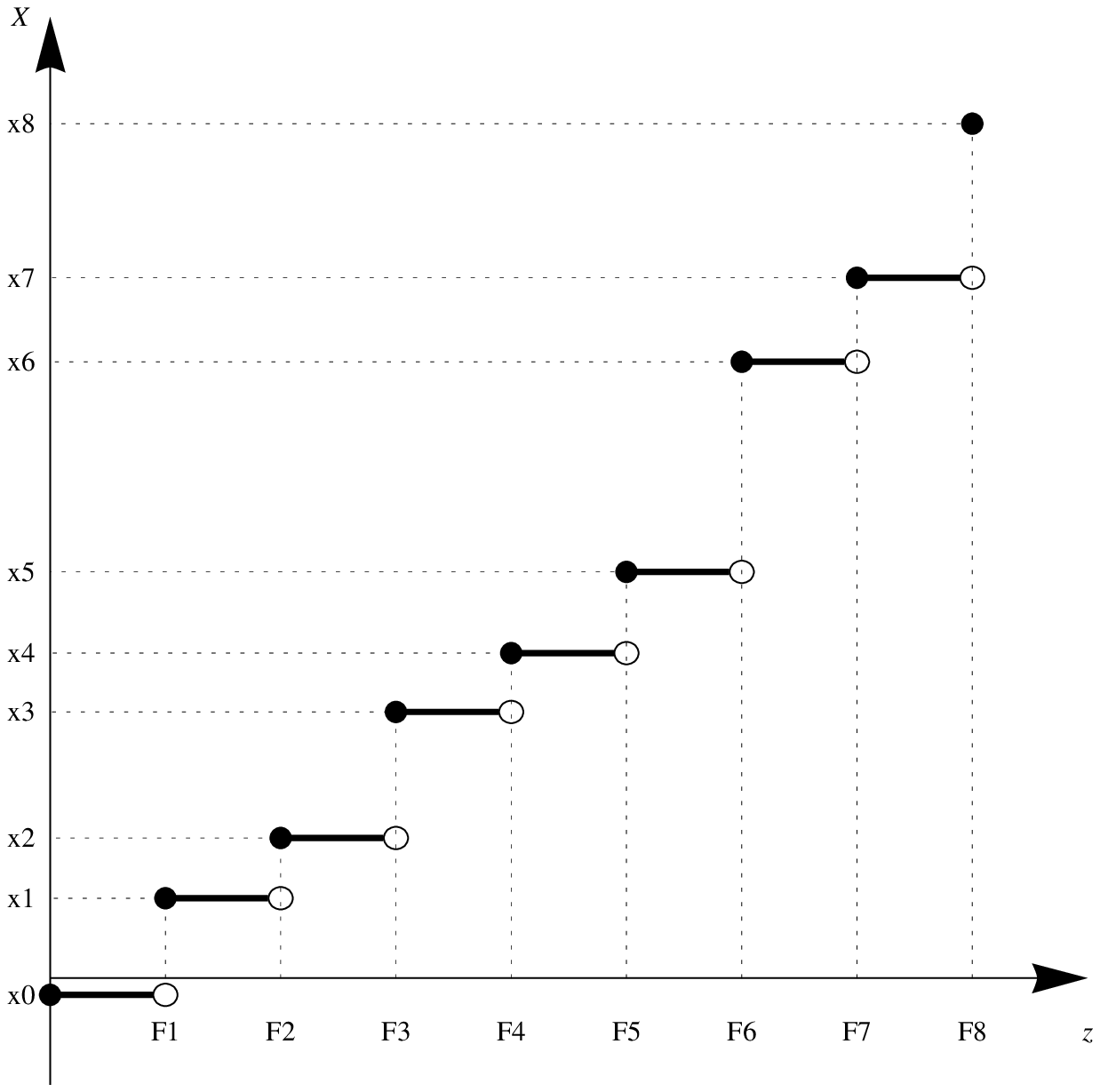}
      \end{psfrags}
      \caption{Maps of the form, respectively from the left, \eqref{eq:trn}, \eqref{eq:tFn} and~\eqref{eq:tXn} with $n(=3)$ and $t(\ge0)$ omitted.}
\label{fig:Korn2}
\end{figure}
By definition, see figures~\ref{fig:Korn1} and~\ref{fig:Korn2}, for all $t \ge 0$, $z \in [0,L]$ and $x \in \reali$ we have
\begin{align}\nonumber
    \hat{F}^n(t,x) =&\, \sum_{i=0}^{N_n-1} \left[\vphantom{\frac{z-i\,\ell_n}{y^n_i(t)}} i \, \ell_n + y^n_i(t) \left[x-x_i^n(t)\right] \right] \caratt{\left[x_i^n(t), x_{i+1}^n(t)\right[}(x)
    \\\label{eq:hFn}
    &\, + L \,\caratt{\left[x_{N_n}^n(t), +\infty\right[}(x) ,
    \\ \nonumber
    \hat{X}^n(t,z) =&\, \sum_{i=0}^{N_n-2} \left[x_i^n(t) + \frac{z-i\,\ell_n}{y^n_i(t)}\right] \caratt{\left[i\,\ell_n, (i+1)\,\ell_n\right[}(z)
    \\\label{eq:hXn}
    &\, +\left[x_{N_n-1}^n(t) + \frac{z-L+\ell_n}{y^n_{N_n-1}(t)}\right] \caratt{\left[L-\ell_n, L\right]}(z),
    \\
    \label{eq:tFn}
    \tilde{F}^n(t,x) =&\, \sum_{i=0}^{N_n-2} \ell_n \, \left(i+1\right)  \caratt{\left[x_i^n(t), x_{i+1}^n(t)\right[}(x)
    + L \,\caratt{\left[x_{N_n-1}^n(t), +\infty\right[}(x) ,
    \\
    \label{eq:tXn}
    \tilde{X}^n(t,z) =&\, \sum_{i=0}^{N_n-1} x_i^n(t) \, \caratt{\left[i \,\ell_n, (i+1) \,\ell_n\right[}(z)
    + x_{N_n}^n(t) \, \caratt{\{L\}}(z).
\end{align}
Observe that for any fixed $t\ge0$
\begin{itemize}
  \item[$\bullet$] $z \mapsto \hat{X}^n(t,z)$ and $x \mapsto \hat{F}^n(t,x)$ are piecewise linear continuous and non-decreasing;
  \item[$\bullet$] $\hat{F}^n(t) \,\colon\, [x_0^n(t), x_{N_n}^n(t)] \to [0,L]$ and $\hat{X}^n(t) \,\colon\, [0,L] \to [x_0^n(t), x_{N_n}^n(t)]$ are strictly increasing and are inverse functions of  each other in the classical sense;
  \item[$\bullet$] $z \mapsto \tilde{X}^n(t,z)$ and $x \mapsto \tilde{F}^n(t,x)$ are piecewise constant with $N_n$ jumps of discontinuity, right continuous and non-decreasing;
  \item[$\bullet$] $\hat{F}^n(t,x) \le \tilde{F}^n(t,x)$ for any $x \in \reali$ and $\tilde{X}^n(t,z) \le \hat{X}^n(t,z)$ for any $z \in [0,L]$;
  \item[$\bullet$] $\tilde{F}^{n+1}(t,x) \le \tilde{F}^n(t,x)$ for any $x \in \reali$ and $\tilde{X}^n(t,z) \le \tilde{X}^{n+1}(t,z)$ for any $z \in [0,L]$;
  \item[$\bullet$] $\hat{\rho}^n(t,x) = \hat{F}^n_x(t,x)$ for all $x\ne x_i^n(t)$, $i=1,\ldots,N_n$, while $\tilde{\rho}^n = \tilde{F}^n_x$ in the sense of distributions.
\end{itemize}
\begin{figure}[htpb]
      \centering\begin{psfrags}\tiny
      \psfrag{r}[r,c]{$\check{\rho}$}
      \psfrag{1}[r,c]{$1$}
      \psfrag{z}[l,t]{$z$}
      \psfrag{F8}[l,t]{$L$}
      \psfrag{F7}[l,t]{$\frac{7L}{8}$}
      \psfrag{F6}[l,t]{$\frac{3L}{4}$}
      \psfrag{F5}[l,t]{$\frac{5L}{8}$}
      \psfrag{F4}[l,t]{$\frac{L}{2}$}
      \psfrag{F3}[l,t]{$\frac{3L}{8}$}
      \psfrag{F2}[l,t]{$\frac{L}{4}$}
      \psfrag{F1}[l,t]{$\frac{L}{8}$}
        \includegraphics[width=.3\textwidth]{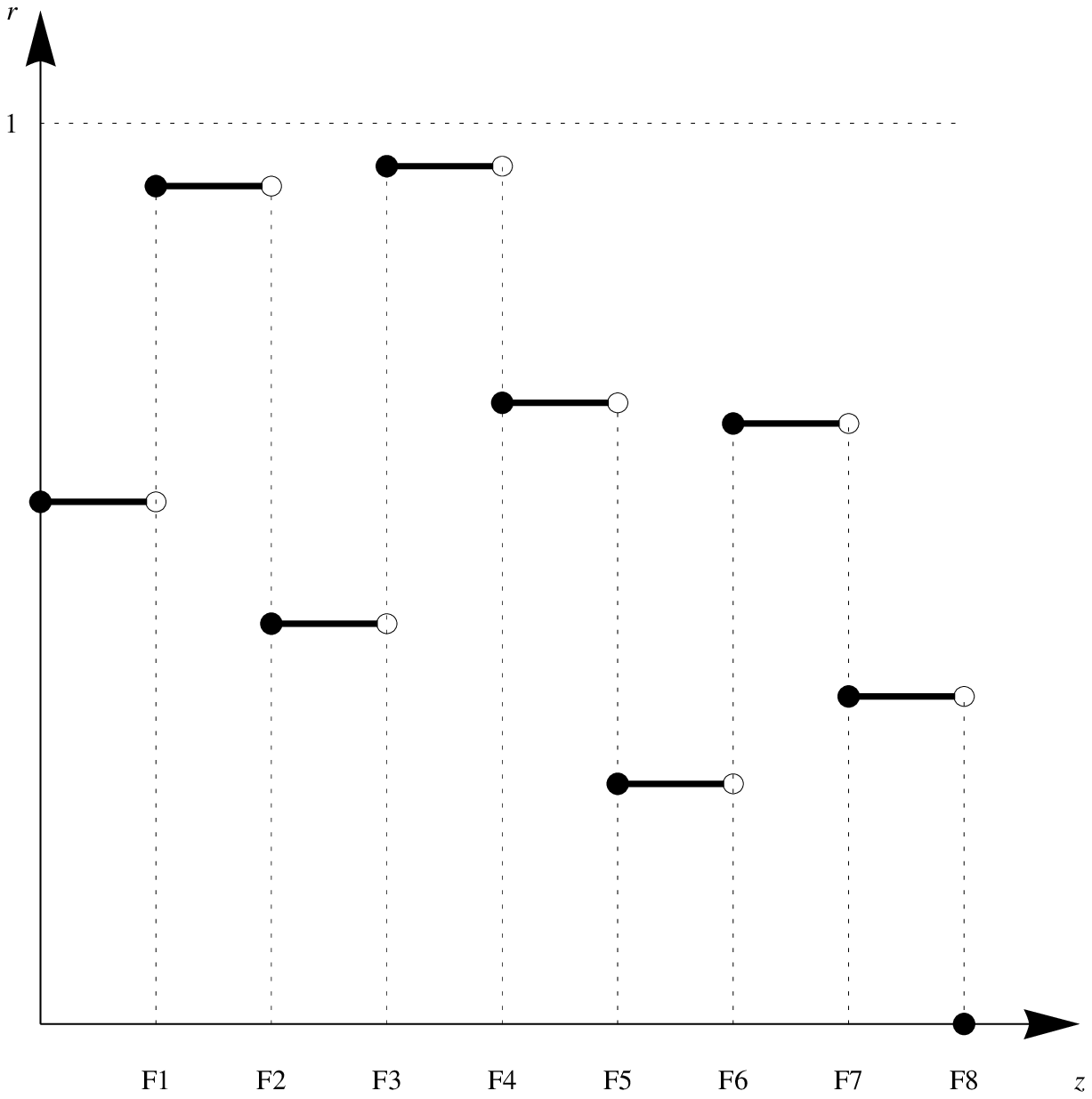}
      \end{psfrags}
      \caption{Map of the form~\eqref{eq:crn} with $n(=3)$ and $t(\ge0)$ omitted.}
\label{fig:Korn3}
\end{figure}
For later use, see Figure~\ref{fig:Korn3}, we introduce also the \emph{discrete Lagrangian density}
\begin{align}\label{eq:crn}
    &\check{\rho}^n (t,z) \doteq \hat{\rho}^n\left(t, \hat{X}^n(t,z)\right)
    = \sum_{i=0}^{N_n-1} y^n_i(t) \, \caratt{\left[i\,\ell_n, (i+1)\,\ell_n\right[}(z)
\end{align}
and observe that
\begin{align}\label{eq:approx_PDE}
    &\tilde{X}_t^n (t,z) = v\left(\check{\rho}^n(t,z)\right),&&t>0,~ z \in[0,L].
\end{align}
As a first step, we want to prove that $(\tilde{X}^n)_{n\in\N}$ and $(\hat{X}^n)_{n\in\N}$ have the same unique limit in $\Llloc1([0,+\infty[\times [0,L];\reali)$.
\begin{proposition}[Definition of $X$]\label{pro:convergence1}
    There exists a unique non-decreasing and $z$--right continuous function $X =X(t,z)\in \Ll\infty\left(\left[0,+\infty\right[ \times \left[0,L\right]; \reali\right)$, such that
    \begin{align*}
        &(\hat{X}^n)_{n\in\N} \hbox{ and } (\tilde{X}^n)_{n\in\N} \hbox{ converge to } X
        \hbox{ in } \Llloc1\left(\left[0,+\infty\right[ \times \left[0 ,L\right];\reali\right),
    \end{align*}
    and for any $t,s>0$
    \begin{subequations}\label{eq:Xestimates}
    \begin{align}
        &\tv\left[X(t)\right] \le \bar{x}_{\max} - \bar{x}_{\min} + (v_{\max}-v(R))\,t ,\\
        &\norma{X(t)}_{\Ll\infty([0,L];\reali)} \le \max\left\{\modulo{\bar{x}_{\min}+v(R)\,t}, \modulo{\bar{x}_{\max}+ v_{\max}\,t}\right\} ,\\
        &\int_0^L \modulo{X(t,z) - X(s,z)} \,{\der}z \le \max\{\modulo{v_{\max}},\modulo{v(R)}\} \,L \, \modulo{t-s} .
    \end{align}
    \end{subequations}
    Moreover, $(\tilde{X}^n)_{n\in\N}$ converges to $X$ a.e.~on $\left[0,+\infty\right[ \times \left[0 ,L\right]$.
\end{proposition}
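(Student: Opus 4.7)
The strategy is to identify $X$ as the pointwise limit of the monotone sequence $(\tilde{X}^n)_{n\in\N}$. As listed among the elementary properties of $\tilde X^n$, the sequence is non-decreasing in $n$ at every $(t,z)$; moreover, Lemma~\ref{lem:1}, together with $x_{N_n}^n(t)=\bar x_{\max}+v_{\max}t$ and the lower bound $x_0^n(t)\ge \bar x_{\min}+v(R)\,t$ (which follows from $\dot x_0^n=v(y_0^n)\ge v(R)$, since $y_0^n\le R$ and $v$ is decreasing), provides an $n$--uniform $\Ll\infty$ bound on every compact time interval. Hence, for every $(t,z)$, the monotone convergence theorem produces a limit $X(t,z)$, and redefining $X(t,\cdot)$ on its (at most countable) jump set yields the unique right-continuous non-decreasing representative.

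Next, I would establish the three estimates in~\eqref{eq:Xestimates} first at the level of $\tilde X^n$ with $n$--uniform constants, and then pass to the limit. Since $z\mapsto \tilde X^n(t,z)$ is piecewise constant and non-decreasing, its total variation is exactly $x_{N_n}^n(t)-x_0^n(t)\le \bar x_{\max}-\bar x_{\min}+(v_{\max}-v(R))\,t$, and lower semicontinuity of the total variation under $\Ll1$--convergence (or equivalently, a direct computation using the monotonicity of the limit) transfers this bound to $X(t,\cdot)$. The $\Ll\infty$ estimate follows immediately from the two-sided control $\bar x_{\min}+v(R)\,t\le x_0^n(t)\le \tilde X^n(t,z)\le x_{N_n}^n(t)=\bar x_{\max}+v_{\max}t$. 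For the time-Lipschitz bound I would use~\eqref{eq:ftl2}: each $\dot x_i^n(t)=v(y_i^n(t))$ satisfies $|\dot x_i^n(t)|\le \max\{|v_{\max}|,|v(R)|\}$ because $y_i^n(t)\in[0,R]$ and $v$ is decreasing, hence
\[
    \bigl|\tilde X^n(t,z)-\tilde X^n(s,z)\bigr|\le \max\{|v_{\max}|,|v(R)|\}\,|t-s|
\]
pointwise, and integration in $z$ gives the claim.

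To prove that $(\hat X^n)_{n\in\N}$ shares the same limit $X$, I would compute directly from~\eqref{eq:hXn} and~\eqref{eq:tXn} that on each $[i\ell_n,(i+1)\ell_n[$ one has $\hat X^n(t,z)-\tilde X^n(t,z)=(z-i\ell_n)/y_i^n(t)\ge 0$, whence
\[
    \int_0^L \bigl|\hat X^n(t,z)-\tilde X^n(t,z)\bigr|\,{\der}z = \frac{\ell_n}{2}\sum_{i=0}^{N_n-1}\frac{\ell_n}{y_i^n(t)} = \frac{\ell_n}{2}\bigl(x_{N_n}^n(t)-x_0^n(t)\bigr),
\]
which vanishes as $n\to+\infty$ uniformly on bounded time intervals thanks to the $\Ll\infty$ bound already established. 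Combining the pointwise a.e.~convergence of $\tilde X^n$ with the uniform $\Ll\infty$ bound on both sequences, the dominated convergence theorem yields $\hat X^n,\, \tilde X^n\to X$ in $\Llloc1(\left[0,+\infty\right[\times\left[0,L\right];\reali)$.

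The proposition is essentially an organizational step; the only mildly delicate points are verifying the monotonicity lower bound on $x_0^n(t)$ in the regime where $v(R)$ may be negative, and propagating the total-variation bound through the limit via lower semicontinuity. I do not anticipate any serious obstacle here. The real difficulty lies ahead: identifying $X$ as the pseudo-inverse of an entropy solution to~\eqref{eq:LWRmodel} is what will require the discrete $\BV$ and discrete Oleinik estimates announced in Section~\ref{subsec:intro_technical}.
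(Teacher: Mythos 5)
Your argument is correct, and the estimates you derive coincide with~\eqref{eq:Xestimates}; the difference from the paper lies in the compactness mechanism. The paper first proves, for $\tilde X^n$, exactly the three uniform bounds you list (total variation in $z$, $\Ll\infty$ bound, and $\Ll1$ time-equicontinuity with constant $\max\{\modulo{v_{\max}},\modulo{v(R)}\}L$), and then invokes Helly's compactness theorem in the form of \cite[Theorem~2.4]{BressanBook} to extract a subsequence converging in $\Llloc1$ to a $z$--right continuous limit satisfying~\eqref{eq:Xestimates}; only afterwards does it use the monotonicity of $(\tilde X^n)_{n\in\N}$ in $n$ to upgrade subsequential convergence to convergence of the whole sequence and a.e.~convergence. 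You instead build $X$ directly as the pointwise monotone-in-$n$ limit (with the uniform $\Ll\infty$ bound and dominated convergence replacing Helly), take the right-continuous representative, and recover the bounds by lower semicontinuity of the total variation and Fatou; your treatment of $\hat X^n$ via the telescoping identity $\int_0^L\modulo{\hat X^n-\tilde X^n}\,{\der}z=\tfrac{\ell_n}{2}\left[x_{N_n}^n(t)-x_0^n(t)\right]$ is exactly the paper's Step~2. Your route is more elementary (no Helly needed) but leans entirely on the ordering $\tilde X^n\le\tilde X^{n+1}$, which the paper states as an unproved bullet property of the nested atomization and uses only for the whole-sequence/a.e.~upgrade; the Helly route yields subsequential compactness and the estimates independently of that comparison property, which is a small robustness advantage, while both proofs need it for the full statement as given.
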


\begin{proof}
   Fix $T > 0$, and let $n > 0$.

    \noindent$\bullet$~{\textbf{\textsc{Step~1: $\bf\tilde{X}^n \to X$.}}}
    Since $z \mapsto \tilde{X}^n(t,z)$ is non-decreasing with $\tilde{X}^n(t,0) = x_0^n(t) \ge \bar{x}_0^{n}+v(R) \, t = \bar{x}_{\min}+v(R) \, t $ and $\tilde{X}^n(t,L) = \bar{x}_{\max} + v_{\max} \, t$, we have that
    \begin{align*}
        &\tv\left[\tilde{X}^n(t)\right] \le \bar{x}_{\max} - \bar{x}_{\min} + (v_{\max}-v(R))\,t ,\\
        &\norma{\tilde{X}^n(t)}_{\Ll\infty([0,L];\reali)} \le \max\{\modulo{\bar{x}_{\min}+v(R) \, t}, \modulo{\bar{x}_{\max}+ v_{\max}\,t}\} .
    \end{align*}
    Moreover, if $s < t$, then by~\eqref{eq:ftl2} and~\eqref{eq:tXn}
    \begin{align*}
        &\int_0^L \modulo{\tilde{X}^n(t,z) - \tilde{X}^n(s,z)} \,{\der}z =
        \sum_{i=0}^{N_n-1} \ell_n \left|x_i^n(t) - x_i^n(s)\right|
        \\
        &\le\sum_{i=0}^{N_n-1} \ell_n \int_s^t \left|v\left(y^n_i(\tau)\right)\right|  {\der}\tau \le \max\{\modulo{v_{\max}},\modulo{v(R)}\} \,L \left(t-s\right) .
    \end{align*}
    Therefore, by applying Helly's theorem in the form~\cite[Theorem~2.4]{BressanBook}, up to a subsequence, $(\tilde{X}^n)_{n\in\N}$ converges in $\Llloc1\left(\left[0,+\infty\right[ \times \left[0 ,L\right];\reali\right)$ to a function $X$ which is right continuous w.r.t.~$z$ and satisfying~\eqref{eq:Xestimates}. Finally, since $\tilde{X}^{n+1}(t,z) \le \tilde{X}^{n}(t,z)$ for all $t\ge0$ and $z \in [0,L]$, the whole sequence $(\tilde{X}^n)_{n\in\N}$ converges to $X$ and a.e.~on $\left[0,+\infty\right[ \times \left[0 ,L\right]$.

    \noindent$\bullet$~{\textbf{\textsc{Step~2: $\bf\hat{X}^n \to X$.}}}
    By definition, see~\eqref{eq:Deltan}, \eqref{eq:hXn} and~\eqref{eq:tXn}, we have for all $t \in \left[0,T\right]$
    \begin{align*}
        &\int_0^L \modulo{\hat{X}^n(t,z) - \tilde{X}^n(t,z)} \,{\der}z =
        \sum_{i=0}^{N_n-1} y^n_i(t)^{-1} \int_{i\,\ell_n}^{(i+1)\,\ell_n} \left[z-i\,\ell_n\right]\,{\der}z
        \\
        &=
        \frac{\ell_n}{2} \sum_{i=0}^{N_n-1} \left[x_{i+1}^n(t) - x_i^n(t)\right] =
        \frac{\ell_n}{2} \left[x_{N_n}^n(t) - x_0^n(t)\right] \\
        & \le
        \frac{\ell_n}{2} \left[\vphantom{x_{N_n}^n} \bar{x}_{\max} - \bar{x}_{\min} +(v_{\max}-v(R)) \,T\right] ,
    \end{align*}
    and the proof is complete as $(\tilde{X}^n)_{n\in\N}$ converges to $X$ in view of \textsc{Step~1}.
\end{proof}

In the next lemma we prove that $X$ inherits the maximum principle property satisfied by $\tilde{X}^n$ proven in Lemma~\ref{lem:1}.
\begin{lemma}\label{lem:2}
    For all $t\ge0$ and for a.e.~$z_1, z_2 \in \left[0,L\right]$ with $z_1<z_2$ we have
    \begin{equation}\label{eq:XLipwrtz}
        \frac{z_2-z_1}{R} \le X(t,z_2) - X(t,z_1) \le \bar{x}_{\max} - \bar{x}_{\min} +(v_{\max}-v(R)) \, t .
    \end{equation}
\end{lemma}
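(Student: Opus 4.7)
The upper bound is essentially free: by Proposition~\ref{pro:convergence1}, for each $t \ge 0$ the function $z \mapsto X(t,z)$ is nondecreasing with $\tv[X(t,\cdot)] \le \bar{x}_{\max} - \bar{x}_{\min} + (v_{\max}-v(R))\,t$. Hence for any $z_1 < z_2$ in $[0,L]$ one has $X(t,z_2) - X(t,z_1) \le \tv[X(t,\cdot)]$, which is exactly the claimed upper inequality.

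The real content is the lower bound, and the whole strategy is to transfer the discrete maximum principle of Lemma~\ref{lem:1} to the continuum level via the approximants $\tilde X^n$. Given $n \in \N$, $t \ge 0$, and $z_1 < z_2$ in $[0,L]$, choose indices $i \le j$ in $\{0,\dots,N_n-1\}$ such that $z_1 \in [i\ell_n, (i+1)\ell_n[$ and $z_2 \in [j\ell_n, (j+1)\ell_n[$ (with the obvious convention if $z_2 = L$). Using the explicit formula \eqref{eq:tXn} and telescoping,
\[
\tilde X^n(t,z_2) - \tilde X^n(t,z_1) \;=\; x_j^n(t) - x_i^n(t) \;=\; \sum_{k=i}^{j-1}\bigl[x_{k+1}^n(t)-x_k^n(t)\bigr] \;\ge\; \frac{(j-i)\,\ell_n}{R},
\]
where the last step is Lemma~\ref{lem:1}. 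Since $z_1 \ge i\ell_n$ and $z_2 < (j+1)\ell_n$, one has $(j-i)\ell_n > z_2 - z_1 - \ell_n$, so the discrete inequality
\[
\tilde X^n(t,z_2) - \tilde X^n(t,z_1) \;\ge\; \frac{z_2 - z_1 - \ell_n}{R}
\]
holds for every $t \ge 0$, every $n$, and every $z_1 < z_2$.

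The concluding step is to let $n \to \infty$. The only subtle point is that Proposition~\ref{pro:convergence1} identifies $X$ as the $\Llloc1$ limit of $\tilde X^n$, so in principle $X(t,\cdot)$ is determined only up to modifications on a null set. To pass to the pointwise limit I would exploit the monotonicity in $n$ noted in the bullet list preceding Proposition~\ref{pro:convergence1}: for every fixed $(t,z)$ the sequence $\tilde X^n(t,z)$ is non-decreasing, hence admits a limit $X^\sharp(t,z) \doteq \sup_n \tilde X^n(t,z)$, which is finite because of the uniform $\Ll\infty$ bound in \textsc{Step~1} of the proof of Proposition~\ref{pro:convergence1}. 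Dominated convergence then gives $\tilde X^n \to X^\sharp$ in $\Llloc1$, whence $X^\sharp = X$ almost everywhere on $[0,+\infty[ \times [0,L]$. Passing to the pointwise supremum in the discrete inequality yields
\[
X^\sharp(t,z_2) - X^\sharp(t,z_1) \;\ge\; \frac{z_2 - z_1}{R} \qquad \text{for every } t \ge 0 \text{ and every } z_1 < z_2,
\]
which by Fubini translates into the stated bound for $X(t,z_2) - X(t,z_1)$ for every $t \ge 0$ and a.e.~$z_1 < z_2$.

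The main (mild) obstacle is precisely this identification of the pointwise monotone limit $X^\sharp$ with the $\Llloc1$-limit $X$: once this is done, both the lower and upper bounds are immediate consequences of Proposition~\ref{pro:convergence1} and Lemma~\ref{lem:1}. I do not expect additional technical difficulties, since the discrete bound $x_{k+1}^n - x_k^n \ge \ell_n/R$ is already \emph{uniform} in $n$, $k$, and $t$, and the Lagrangian structure of $\tilde X^n$ converts this lower bound on spacings directly into a lower bound on difference quotients of the pseudo-inverse.
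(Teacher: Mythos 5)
Your proposal is correct and follows essentially the same route as the paper: combine the explicit formula \eqref{eq:tXn} with the uniform spacing bound $x_{k+1}^n-x_k^n\ge \ell_n/R$ from Lemma~\ref{lem:1} to get $\tilde X^n(t,z_2)-\tilde X^n(t,z_1)\ge (z_2-z_1-\ell_n)/R$, then let $n\to\infty$ using the convergence of $\tilde X^n$ to $X$. Your extra construction of the pointwise monotone limit $X^\sharp$ is harmless but unnecessary, since Proposition~\ref{pro:convergence1} already records the a.e.\ pointwise convergence of the whole sequence $(\tilde X^n)_{n\in\N}$ to $X$, which is exactly what the paper invokes at this step.
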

\begin{proof}
    The upper bound is obvious. Take $0\le z_1 < z_2 \le L$. For $n>0$ sufficiently large, we can take $i,j \in \{0,1,\ldots,N_n\}$ such that $i<j$, $i \,\ell_n \le z_1 < (i+1)\,\ell_n$ and $\ell_n  \,j \le z_2 < \ell_n \,(j+1)$. By~\eqref{eq:tXn} and Lemma~\ref{lem:1} we have
    \begin{align*}
        &\frac{\tilde{X}^n(t,z_2) - \tilde{X}^n(t,z_1)}{z_2-z_1} \ge
        \frac{x_j^n(t) - x_i^n(t)}{(j+1) \,\ell_n - i\,\ell_n} \ge
        \frac{(j-i) \,\ell_n}{R\left((j+1) \,\ell_n - i\,\ell_n\right)}
        \\ &=
        \frac{1}{R} - \frac{1}{R\left(j-i+1\right)} \ge
        \frac{1}{R} - \frac{1}{R\left((z_2 \, \ell_n^{-1} - 1) - z_1 \, \ell_n^{-1} + 1\right)} =
        \frac{1}{R} - \frac{\ell_n}{R(z_2 - z_1)} .
    \end{align*}
    By letting $n$ go to infinity in the above estimate we conclude the proof. Indeed, as $n$ goes to infinity we have that $\ell_n/[R(z_2 - z_1)]$ converges to zero and $(\tilde{X}^n)_{n\in\N}$ converges to $X$ a.e.~on $\left[0,+\infty\right[ \times \left[0 ,L\right]$ in view of Proposition~\ref{pro:convergence1}.
\end{proof}

\begin{proposition}[Definition of $F$]\label{pro:convergence2}
    $(\hat{F}^n)_{n\in\N}$ and $(\tilde{F}^n)_{n\in\N}$ converge to $F \doteq \mathcal{F}[X]$ in $\Llloc1 (\left[0,+\infty\right[ \times \reali;[0,L])$. Moreover, $(\tilde{F}^n)_{n\in\N}$ converges to $F$ a.e.~on $\left[0,+\infty\right[ \times \reali$.
\end{proposition}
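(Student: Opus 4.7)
The plan is to exploit the one-dimensional Wasserstein-$1$ identity~\eqref{eq:wass_equiv0}, which states that for any pair of measures in $\mathcal{M}_L$ the $\Ll1$--distance of the cumulative distribution functions equals the $\Ll1$--distance of their pseudo-inverses. The point is that, with strong compactness for the pseudo-inverse variables already in hand thanks to Proposition~\ref{pro:convergence1}, the corresponding compactness for the cumulative distributions becomes purely abstract.

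First I would verify that $F = \mathcal{F}[X]$ is a genuine cumulative distribution of some measure in $\mathcal{M}_L$ for each $t \ge 0$. This is where Proposition~\ref{pro:convergence1} and Lemma~\ref{lem:2} enter: $X(t,\cdot)$ is non-decreasing, $z$--right continuous, and has difference quotients bounded below by $1/R$, so $F(t,\cdot) = \mathcal{F}[X(t,\cdot)]$ is well-defined, non-decreasing, right-continuous, takes values in $[0,L]$, equals $0$ to the left of $X(t,0)$ and $L$ to the right of $X(t,L)$, and hence determines a unique measure $\rho(t,\cdot) \in \mathcal{M}_L$ whose pseudo-inverse is exactly $X(t,\cdot)$. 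The isometry~\eqref{eq:wass_equiv0} applied for each fixed $t$ to $\tilde{\rho}^n(t), \hat{\rho}^n(t), \rho(t) \in \mathcal{M}_L$ then yields
\[
\norma{\tilde{F}^n(t,\cdot) - F(t,\cdot)}_{\Ll1(\reali;\reali)} = \norma{\tilde{X}^n(t,\cdot) - X(t,\cdot)}_{\Ll1([0,L];\reali)},
\]
and the analogous identity for $\hat{F}^n - F$. Integrating in $t$ over any compact interval $[0,T]$ and invoking Proposition~\ref{pro:convergence1} gives the $\Llloc1([0,+\infty[\times\reali;[0,L])$ convergence of both sequences to $F$. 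For $\hat{F}^n$ one may alternatively bypass the isometry by combining the convergence of $\tilde{F}^n$ with the uniform bound $\norma{\hat{X}^n(t) - \tilde{X}^n(t)}_{\Ll1([0,L];\reali)} = O(\ell_n)$ established in Step~2 of the proof of Proposition~\ref{pro:convergence1}.

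Finally, for the almost-everywhere convergence of $(\tilde{F}^n)_{n\in\N}$ to $F$, I would exploit the monotonicity $\tilde{F}^{n+1}(t,x) \le \tilde{F}^n(t,x)$ already noted in Section~\ref{sec:convergence}: for each fixed $(t,x)$ the sequence $(\tilde{F}^n(t,x))_{n\in\N}$ is non-increasing and bounded below by zero, hence it admits a pointwise limit $\tilde{F}^\infty(t,x)\in[0,L]$, and the $\Llloc1$ convergence obtained in the previous step forces $\tilde{F}^\infty = F$ almost everywhere. The only mildly delicate step is the identification of $F$ as a bona fide cumulative distribution, i.e.~the verification that $\mathcal{F}[X]$ inherits from $X$ enough regularity to make the isometry~\eqref{eq:wass_equiv0} applicable; once this is settled, the rest of the argument is routine.
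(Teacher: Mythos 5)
Your proposal is correct and follows essentially the same route as the paper: well-definedness of $F=\mathcal{F}[X]$ via the strict monotonicity of $X(t,\cdot)$ from Lemma~\ref{lem:2}, the isometry~\eqref{eq:wass_equiv0} combined with Proposition~\ref{pro:convergence1} to transfer the $\Ll1$ convergence of the pseudo-inverses to the cumulative distributions, and the monotonicity $\tilde{F}^{n+1}\le\tilde{F}^n$ to upgrade to a.e.\ convergence. The alternative bound $\norma{\hat{X}^n-\tilde{X}^n}_{\Ll1}=O(\ell_n)$ you mention for $\hat{F}^n$ is a harmless variant of the same argument.
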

\begin{proof}
    We first observe that by Lemma~\ref{lem:2} for any fixed $t\ge0$, the map $z \mapsto X(t,z)$ is strictly increasing and for all $z \in [0,L]$
    \begin{equation*}
        \frac{z}{R} + \bar{x}_{\min}+v(R)\, t \le X(t,z) \le \bar{x}_{\max} + v_{\max} \, t - \frac{L-z}{R} .
    \end{equation*}
    Thus, $F$ is well defined. The convergence of $(\hat{F}^n)_{n\in\N}$ and $(\tilde{F}^n)_{n\in\N}$ to $F$ follows from the basic property~\eqref{eq:wass_equiv0} of the scaled Wasserstein distance and from Proposition~\ref{pro:convergence1}. Indeed, for any $T>0$ we have
    \begin{align*}
        &\lim_{n\rightarrow +\infty} \norma{\hat{F}^n - F}_{\Ll1([0,T]\times\reali;\reali)}
        =\lim_{n\rightarrow +\infty} \norma{\hat{X}^n - X}_{\Ll1([0,T]\times[0,L];\reali)}
        =0 ,
        \\
        &\lim_{n\rightarrow +\infty} \norma{\tilde{F}^n - F}_{\Ll1([0,T]\times\reali;\reali)}
        =\lim_{n\rightarrow +\infty} \norma{\tilde{X}^n - X}_{\Ll1([0,T]\times[0,L];\reali)}
        =0 .
    \end{align*}
    Finally, $(\tilde{F}^n)_{n\in\N}$ converges to $F$ a.e.~on $\left[0,+\infty\right[ \times \reali$ because $\tilde{F}^{n+1}(t,x) \le \tilde{F}^n(t,x)$ for all $t\ge0$ and $x \in \reali$.
\end{proof}

\begin{lemma}\label{lem:3}
    For all $t\ge0$ and for a.e.~$x_1, x_2 \in \reali$ with $x_1 < x_2$ we have
    \begin{equation}\label{eq:FLipwrtz}
        0 \le F(t,x_2) - F(t,x_1) \le R(x_2-x_1) .
    \end{equation}
\end{lemma}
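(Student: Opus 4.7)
The plan is to exploit the pseudo-inverse relationship between $F$ and $X$ recalled in Section~\ref{sec:preliminaries}, combined with the lower bound on the $z$--difference quotient of $X$ already established in Lemma~\ref{lem:2}. Monotonicity of $F$ in $x$, namely $F(t,x_2) - F(t,x_1) \ge 0$, is immediate from the very definition
\[F(t,x) = \mathcal{F}[X(t,\cdot)](x) = \textrm{meas}\left\{z \in \left[0,L\right] \,\colon\, X(t,z) \le x\right\},\]
since this level set grows with $x$. Hence the content of the lemma is entirely concentrated in the upper Lipschitz bound $F(t,x_2) - F(t,x_1) \le R(x_2 - x_1)$.

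For the upper bound I would first use the level-set representation above to rewrite the increment as a measure:
\[F(t,x_2) - F(t,x_1) = \textrm{meas}(A), \qquad A \doteq \left\{z \in \left[0,L\right] \,\colon\, x_1 < X(t,z) \le x_2\right\}.\]
By Lemma~\ref{lem:2}, there exists a negligible set $N \subset [0,L]$ outside which
\[X(t,z_2) - X(t,z_1) \ge (z_2 - z_1)/R \qquad \text{for every } z_1,z_2 \in [0,L]\setminus N \text{ with } z_1 < z_2.\]

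Then I would argue by contradiction: assume $\textrm{meas}(A) > R(x_2 - x_1)$. Since $|N| = 0$, we have $\textrm{meas}(A \setminus N) = \textrm{meas}(A) > R(x_2 - x_1)$ and therefore $\sup(A \setminus N) - \inf(A \setminus N) > R(x_2 - x_1)$. Choosing $\varepsilon>0$ small enough and picking $z_1, z_2 \in A \setminus N$ with $z_1$ within $\varepsilon$ of the infimum and $z_2$ within $\varepsilon$ of the supremum of $A \setminus N$, we obtain $z_2 - z_1 > R(x_2 - x_1)$. Lemma~\ref{lem:2} then yields $X(t,z_2) - X(t,z_1) > x_2 - x_1$, which contradicts $X(t,z_1), X(t,z_2) \in (x_1, x_2]$, since $z_1, z_2 \in A$.

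The only minor technical point is the careful bookkeeping of the a.e.~clause in Lemma~\ref{lem:2} together with the identification $\textrm{meas}(A \setminus N) = \textrm{meas}(A)$; conceptually the result is the standard duality statement that $R$--Lipschitz continuity of a monotone function is equivalent to its pseudo-inverse having difference quotients bounded below by $1/R$, which here gives the density bound $\hat\rho(t,\cdot) \le R$ in the limit.
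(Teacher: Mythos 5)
Your proof is correct, and it takes a slightly different route from the paper's. The paper evaluates $F$ at the two points, sets $z_1=F(t,x_1)$, $z_2=F(t,x_2)$, and applies Lemma~\ref{lem:2} directly to $z_1$ and $z_2-\eta$, using the inf-characterisation $X(t,z)=\inf\{x\,\colon\,F(t,x)>z\}$ to get $X(t,z_2-\eta)\le x_2$ and $X(t,z_1)\ge x_1$, then lets $\eta\downarrow0$; you instead never evaluate $X=\mathcal{X}[F]$ but work with the defining identity $F=\mathcal{F}[X]$, rewrite the increment as $\mathrm{meas}(A)$ with $A=\{z\,\colon\,x_1<X(t,z)\le x_2\}$, and derive a contradiction from the elementary bound $\mathrm{meas}(A\setminus N)\le \sup(A\setminus N)-\inf(A\setminus N)$ combined with Lemma~\ref{lem:2}. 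Both arguments hinge on the same key input, the $1/R$ lower bound on the difference quotients of $X$. What your version buys is a cleaner handling of two technical points the paper glosses over: the a.e.~clause in Lemma~\ref{lem:2} (you only ever use the inequality at points of $A\setminus N$, whereas the paper applies it at the specific points $z_1$, $z_2-\eta$, which strictly speaking needs an extension of the bound to all pairs, e.g.~via monotonicity and right-continuity of $X(t,\cdot)$), and the inversion identity $\mathcal{X}[\mathcal{F}[X]]=X$ implicitly used by the paper. The paper's argument is in exchange shorter and gives the Lipschitz bound pointwise for every pair $x_1<x_2$ in a more direct way; note that your argument also yields it for every pair, so the a.e.~phrasing of the statement is not a restriction in either approach.
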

\begin{proof}
    Fix $x_1<x_2$ and denote $z_1 = F(t,x_1) \le z_2 = F(t,x_2)$. Since the lower bound is obvious, it is sufficient to prove that
    \begin{equation*}
        z_2 - z_1 \le R(x_2 - x_1) .
    \end{equation*}
    If $z_1 = z_2$, then there is nothing to prove. Assume therefore that $z_1 \ne z_2$ and fix $\eta \in \left]0, z_2 - z_1\right[$. By definition, $X(t,z) = \mathcal{X}[F](t,z) = \inf \{ x\in\reali \,\colon\, F(t,x) > z \}$. Since $F(t,x_2) = z_2 > z_2 -\eta$, we have that $X(t, z_2 - \eta) \le x_2$. Moreover, $X(t,z_1) \ge x_1$ because $z \mapsto X(t,z)$ is strictly increasing and right continuous. Therefore, by Lemma~\ref{lem:2} we have
    \begin{align*}
        x_2 - x_1 \ge
        X(t, z_2 - \eta) - X(t, z_1) \ge
       \frac{ z_2 - \eta - z_1}{R} .
    \end{align*}
    Since $\eta>0$ is arbitrary, we have $z_2 - z_1 \le R(x_2 - x_1)$.
\end{proof}

\begin{proposition}[Definition of $\rho$]\label{pro:convergence3}
    For any $t\ge0$, let $\rho(t)$ be the distributional derivative of $x \mapsto F(t,x)$, with $F$ defined in Lemma~\ref{lem:2}. Then:

    \noindent$\bullet$~$\rho(t,\cdot) \in \mathcal{M}_L$ for all $t\ge 0$,

    \noindent$\bullet$~$0 \le \rho(t,x) \le R$ for a.e.~$t\ge0$ and $x\in\reali$,

    \noindent$\bullet$~$(\tilde{\rho}^n)_{n\in\N}$ and $(\hat{\rho}^n)_{n\in\N}$ converge to $\rho$ in the topology of $\Llloc1\left(\left[0,+\infty\right[;\; d_{L,1}\right)$,

\end{proposition}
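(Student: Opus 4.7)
The plan is to derive each of the three assertions as a direct consequence of the properties of $F$ and $X$ already established in Propositions~\ref{pro:convergence1}--\ref{pro:convergence2} and Lemmas~\ref{lem:2}--\ref{lem:3}. The scheme is (i) identify $\rho(t)$ as the Radon measure whose CDF is $F(t,\cdot)$, (ii) read off the pointwise bound from the Lipschitz estimate in Lemma~\ref{lem:3}, and (iii) translate the $\Ll1$--convergence of CDFs/pseudo-inverses into Wasserstein convergence using the isometry~\eqref{eq:wass_equiv0}.

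For the first two bullet points, I would argue as follows. By definition $F(t,\cdot)=\mathcal{F}[X](t,\cdot)$ is non-decreasing, right-continuous, and takes values in $[0,L]$, hence it is the cumulative distribution of a unique non-negative Radon measure $\rho(t,\cdot)$ on $\reali$. To identify total mass and compact support, I would combine Lemma~\ref{lem:2} with the bounds $X(t,0^+)\ge \bar{x}_{\min}+v(R)\,t$ and $X(t,L)\le \bar{x}_{\max}+v_{\max}\,t$ that pass to the limit from~\eqref{eq:tXn}: this forces the set $\{z\in[0,L]:X(t,z)\le x\}$ to have measure $0$ for $x<\bar{x}_{\min}+v(R)\,t$ and measure $L$ for $x\ge \bar{x}_{\max}+v_{\max}\,t$, so $\rho(t,\reali)=L$ and $\spt\rho(t,\cdot)\subset[\bar{x}_{\min}+v(R)\,t,\bar{x}_{\max}+v_{\max}\,t]$. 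The Lipschitz estimate~\eqref{eq:FLipwrtz} then implies that $F(t,\cdot)$ is absolutely continuous with a.e.~derivative in $[0,R]$, which yields $0\le\rho(t,x)\le R$ a.e.

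For the Wasserstein convergence, I would invoke~\eqref{eq:wass_equiv0}, which gives
\begin{equation*}
    d_{L,1}\bigl(\tilde\rho^n(t),\rho(t)\bigr)=\norma{\tilde{X}^n(t,\cdot)-X(t,\cdot)}_{\Ll1([0,L];\reali)},\qquad d_{L,1}\bigl(\hat\rho^n(t),\rho(t)\bigr)=\norma{\hat{X}^n(t,\cdot)-X(t,\cdot)}_{\Ll1([0,L];\reali)}.
\end{equation*}
Since the interval $[0,L]$ of integration in the pseudo-inverse variable is already compact, the $\Llloc1([0,+\infty[\times[0,L];\reali)$ convergence of $(\tilde{X}^n)_{n}$ and $(\hat{X}^n)_{n}$ to $X$ obtained in Proposition~\ref{pro:convergence1} rewrites, by Fubini, as $\Llloc1([0,+\infty[;\reali)$ convergence of the maps $t\mapsto d_{L,1}(\tilde\rho^n(t),\rho(t))$ and $t\mapsto d_{L,1}(\hat\rho^n(t),\rho(t))$ to zero, which is exactly the claim.

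There is no serious obstacle: the only step requiring slight care is making sure that the extrema of the supports at time $t$ pass to the limit correctly (so that $F(t,\pm\infty)$ takes the right values $0$ and $L$), but this is a book-keeping matter and is already encoded in Lemma~\ref{lem:2} together with the explicit form~\eqref{eq:tXn} of $\tilde{X}^n$.
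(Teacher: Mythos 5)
Your proposal is correct and follows essentially the same route as the paper: the $\Ll\infty$ bound $0\le\rho\le R$ comes from the Lipschitz estimate of Lemma~\ref{lem:3} on $F(t,\cdot)$, and the $\Llloc1([0,+\infty[;\,d_{L,1})$ convergence is read off from Proposition~\ref{pro:convergence1} through the isometry~\eqref{eq:wass_equiv0} and Fubini. Your extra book-keeping on the total mass and compact support (via the bounds on $X$ from Lemma~\ref{lem:2}) just spells out the step the paper summarises with ``$\rho(t)\in\mathcal{M}_L$''.
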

\begin{proof}
    For any fixed $t\ge0$, by Lemma~\ref{lem:3} we have that $x \mapsto F(t,x)$ is a Lipschitz function with $\lip\left(F(t)\right) \le R$. Hence its weak derivative $\rho(t)$ is well defined in the space of distributions and is essentially bounded with $\norma{\rho(t)}_{\Ll\infty(\reali)} \le R$. Moreover, $x \mapsto F(t,x)$ is non-decreasing, and therefore $\rho(t) \ge 0$ a.e.~in $\reali$. By Proposition~\ref{pro:convergence1} and~\eqref{eq:wass_equiv0} we easily obtain that for any $T>0$
    \begin{align*}
        &\lim_{n\to+\infty} \int_0^T d_{L,1}\left(\hat{\rho}^n(t), \rho(t)\right) {\der}t =
        \lim_{n\to+\infty} \int_0^T \int_0^L \modulo{\hat{X}^n(t,z) - X(t,z)} \,{\der}z \,{\der}t = 0,
        \\
        &\lim_{n\to+\infty} \int_0^T d_{L,1}\left(\tilde{\rho}^n(t), \rho(t)\right) {\der}t =
        \lim_{n\to+\infty} \int_0^T \int_0^L \modulo{\tilde{X}^n(t,z) - X(t,z)} \,{\der}z \,{\der}t = 0.
    \end{align*}
    Thus, $\rho$ satisfies also the last condition and $\rho(t)\in \mathcal{M}_L$.
\end{proof}

\begin{lemma}[Definition of $\check{\rho}$]\label{lem:limitcheck}
    There exists $\check{\rho}$ in $\Ll{\infty}([0,+\infty[\times [0,L];\reali)$ such that, up to a subsequence, $(\check{\rho}^n)_{n\in\N}$ converges weakly-* in $\Ll{\infty}([0,+\infty[\times  [0,L];\reali)$ to $\check{\rho}$.
\end{lemma}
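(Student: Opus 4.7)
The plan is very short: this lemma is essentially an immediate application of Banach--Alaoglu once we produce a uniform $\Ll\infty$ bound on $\check{\rho}^n$. The key input is the discrete maximum principle of Lemma~\ref{lem:1}, which we have already established.

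First I would write out the explicit form of $\check{\rho}^n$ from \eqref{eq:crn}: on the dyadic interval $[i\ell_n,(i+1)\ell_n[$ it equals $y_i^n(t)=\ell_n/[x_{i+1}^n(t)-x_i^n(t)]$. By the lower bound $x_{i+1}^n(t)-x_i^n(t)\geq \ell_n/R$ from estimate~\eqref{est:lem1} in Lemma~\ref{lem:1}, we obtain
\begin{equation*}
    0\le \check{\rho}^n(t,z)\le R \qquad \text{for every } t\ge 0,\ z\in[0,L],\ n\in\N.
\end{equation*}
Therefore the whole sequence is uniformly bounded in $\Ll\infty([0,+\infty[\times[0,L];\reali)$ by $R=\norma{\bar\rho}_{\Ll\infty(\reali)}$.

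Second, since $[0,+\infty[\times[0,L]$ with the Lebesgue measure is $\sigma$--finite, $\Ll1([0,+\infty[\times[0,L];\reali)$ is separable and $\Ll\infty([0,+\infty[\times[0,L];\reali)$ is its topological dual. By the sequential version of the Banach--Alaoglu theorem, every norm-bounded sequence in this dual space admits a weak-$*$ convergent subsequence. Applying this to $(\check{\rho}^n)_{n\in\N}$ yields the existence of $\check{\rho}\in\Ll\infty([0,+\infty[\times[0,L];\reali)$ and a (non-relabelled) subsequence such that $\check{\rho}^n\rightharpoonup^{\ast}\check{\rho}$ in this topology, with the same bound $0\le\check{\rho}\le R$ inherited in the limit.

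There is no real obstacle here: the only nontrivial ingredient is the uniform $\Ll\infty$ bound, which is precisely the content of Lemma~\ref{lem:1}, and this bound was tailored for exactly this purpose. The identification of $\check{\rho}$ with the Lagrangian density associated to the limit $\rho$ of Proposition~\ref{pro:convergence3} (i.e.~the relation $\check{\rho}=\rho\circ X$ a.e., and the entropy/Oleinik analysis) is deferred to the subsequent subsections and is not part of this statement.
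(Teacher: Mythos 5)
Your proof is correct and follows essentially the same route as the paper: the uniform bound $0\le\check{\rho}^n\le R$ coming from the discrete maximum principle of Lemma~\ref{lem:1}, followed by weak-$*$ sequential compactness of bounded sets in $\Ll\infty$ (the paper leaves the Banach--Alaoglu step implicit, while you spell it out). Nothing is missing.
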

\begin{proof}
    It is sufficient to observe that for any $n>0$ we have $0\le\check{\rho}^n\le R$ a.e.~on $[0,+\infty[\times[0,L]$ because, by Lemma~\ref{lem:1}, $\norma{y^n_i}_{\Ll\infty([0,+\infty[;\reali)} \le R$.
\end{proof}

We conclude this subsection by checking that the scheme is consistent with the prescribed initial condition in the limit.
\begin{proposition}\label{pro:initial}
The sequences $(\tilde{\rho}^n|_{t=0})_{n\in\N}$ and $(\hat{\rho}^n|_{t=0})_{n\in\N}$ both converge to $\bar\rho$ in the $d_{L,1}$--Wasserstein distance.
\end{proposition}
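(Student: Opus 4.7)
The plan is to exploit the characterization \eqref{eq:wass_equiv0} of the scaled $1$--Wasserstein distance via pseudo-inverses and reduce the claim to an $\Ll1$ estimate between piecewise constant (respectively piecewise linear) pseudo-inverses and $X_{\bar\rho}$. The construction of the atomization in \eqref{eq:initial_ftl} is specifically tailored so that $\bar F(\bar x_i^n)=i\,\ell_n$ for $i=0,\ldots,N_n$, where $\bar F$ is the cumulative distribution of $\bar\rho$; this follows because $\bar\rho \in \Ll1$ makes $\bar F$ continuous, $\bar x_0^n=\bar x_{\min}$, and each $\bar x_{i+1}^n$ is defined as a supremum of a sublevel set of $\bar F$ with increment $\ell_n$. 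From this identity and the definition of the pseudo-inverse, it follows immediately that
\[
X_{\bar\rho}(z)\in[\bar x_i^n,\bar x_{i+1}^n]\quad\text{for every }z\in[i\,\ell_n,(i+1)\,\ell_n[,\ i=0,\ldots,N_n-1.
\]

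Now I would read off the two discrete pseudo-inverses at $t=0$ from formulas \eqref{eq:hXn} and \eqref{eq:tXn}. For the empirical measure, $\tilde X^n(0,z)=\bar x_i^n$ on $[i\,\ell_n,(i+1)\,\ell_n[$, so pointwise
\[
\bigl|\tilde X^n(0,z)-X_{\bar\rho}(z)\bigr|\le \bar x_{i+1}^n-\bar x_i^n.
\]
For the piecewise constant density $\hat\rho^n|_{t=0}$, the pseudo-inverse $\hat X^n(0,z)$ is piecewise linear and lies in $[\bar x_i^n,\bar x_{i+1}^n]$ on the same subinterval, so the same bound holds. Integrating and telescoping yields
\[
d_{L,1}\bigl(\tilde\rho^n|_{t=0},\bar\rho\bigr)+d_{L,1}\bigl(\hat\rho^n|_{t=0},\bar\rho\bigr)\le 2\,\ell_n\sum_{i=0}^{N_n-1}\bigl(\bar x_{i+1}^n-\bar x_i^n\bigr)=2\,\ell_n\,(\bar x_{\max}-\bar x_{\min}),
\]
which tends to $0$ because $\ell_n=2^{-n}L\to 0$ while the support is fixed in $[\bar x_{\min},\bar x_{\max}]$.

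There is no serious obstacle here; the only delicate point is to verify the identity $\bar F(\bar x_i^n)=i\,\ell_n$ on intervals where $\bar\rho$ vanishes. In those regions $\bar F$ is locally constant, and the supremum in the recursive definition of $\bar x_{i+1}^n$ picks the right endpoint of the plateau, which is still consistent with the inclusion $X_{\bar\rho}(z)\in[\bar x_i^n,\bar x_{i+1}^n]$ since on such a plateau $X_{\bar\rho}$ is constant and equal to one of the endpoints. Thus the bound above remains valid without further adjustment, and the proposition follows.
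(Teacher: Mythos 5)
Your argument is correct and is essentially the paper's proof read through the other side of \eqref{eq:wass_equiv0}: the paper bounds the $\Ll1$ distance of the cumulative distributions (reaching $\hat\rho^n|_{t=0}$ via the triangle inequality through $\tilde\rho^n|_{t=0}$), while you bound the $\Ll1$ distance of the pseudo-inverses directly, arriving at the same bound $\ell_n\left(\bar{x}_{\max}-\bar{x}_{\min}\right)$. One harmless slip: on a vacuum region the supremum defining $\bar{x}_{i+1}^n$ picks the \emph{left} endpoint of the plateau of $F_{\bar\rho}$ (the first point where $\int_{\bar{x}_i^n}^x\bar\rho$ reaches $\ell_n$), not the right one; this does not affect your proof, since the two facts you actually use, $F_{\bar\rho}(\bar{x}_i^n)=i\,\ell_n$ and $X_{\bar\rho}(z)\in\left[\bar{x}_i^n,\bar{x}_{i+1}^n\right]$ for $z\in\left[i\,\ell_n,(i+1)\,\ell_n\right[$, remain valid.
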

\begin{proof}
By definitions~\eqref{eq:hrn} and~\eqref{eq:trn} we have that
\begin{align*}
    &\hat{\rho}^n(0,x) = \sum_{i=0}^{N_n-1} \bar{y}^n_i \, \caratt{\left[\bar{x}_i^n, \bar{x}_{i+1}^n\right[}(x),&
    &\tilde{\rho}^n(0,x) = \ell_n \sum_{i=0}^{N_n-1} \displaystyle\dirac{\strut{\textstyle \bar{x}_i^n}}(x).
\end{align*}
Therefore $F_{\hat{\rho}^n|_{t=0}} = \hat{F}^n|_{t=0}$, $F_{\tilde{\rho}^n|_{t=0}} = \tilde{F}^n|_{t=0}$ and by~\eqref{eq:wass_equiv0}, \eqref{eq:dyi3} we have
\begin{align*}
    &d_{L,1}\left(\tilde{\rho}^n|_{t=0},\hat{\rho}^n|_{t=0}\right)
    =\norma{\tilde{F}^n|_{t=0} - \hat{F}^n|_{t=0}}_{\Ll1(\reali;\reali)}\\&
    = \sum_{i=0}^{N_n-2} \int_{\bar{x}^n_i}^{\bar{x}^n_{i+1}} \left[\vphantom{\int_{\bar{x}^n_i}^{\bar{x}^n_{i+1}}} \ell_n - \bar{y}^n_i \left[x-\bar{x}^n_i\right]\right] {\der}x
    = \ell_n \sum_{i=0}^{N_n-2} \int_{\bar{x}^n_i}^{\bar{x}^n_{i+1}} \frac{\bar{x}^n_{i+1}-x}{\bar{x}^n_{i+1}-\bar{x}^n_{i}} \,{\der}x\\&
    \le \ell_n \left[\bar{x}_{\max}-\bar{x}_{\min}\right].
\end{align*}
Hence, it is sufficient to prove that $(\tilde{\rho}^n|_{t=0})_{n\in\N}$ converges to $\bar\rho$ in the $d_{L,1}$--Wasserstein distance. By~\eqref{eq:initial_ftl} we have that
\begin{align*}
    d_{L,1}(\tilde{\rho}^n|_{t=0},\bar\rho) &
    = \norma{\tilde{F}^n|_{t=0}-F_{\bar\rho}}_{\Ll1(\reali;\reali)}
    = \sum_{i=0}^{N_n-2} \int_{\bar{x}^n_i}^{\bar{x}^n_{i+1}} \left[\ell_n \, (i+1) - \int_{-\infty}^x \bar\rho(y) \,{\der}y\right]\\ &
    = \sum_{i=0}^{N_n-2} \int_{\bar{x}^n_i}^{\bar{x}^n_{i+1}} \left[\ell_n - \int_{\bar{x}^n_i}^x \bar\rho(y) \,{\der}y\right] {\der}x
    \le \ell_n \left[\bar{x}_{\max}-\bar{x}_{\min}\right]
\end{align*}
and this concludes the proof.
\end{proof}

\subsection{$\BV$ estimates and discrete Oleinik condition}\label{sec:Oleinik}

Let us sum up what we have proven so far. The family of empirical measures $(\tilde{\rho}^n)_{n\in\N}$ converges in the scaled $1$--Wasserstein sense to a limit $\rho$ belonging to $\Ll\infty\left(\left[0,+\infty\right[; \mathcal{M}_L\right)$ and such that $0\le \rho\le R$ almost everywhere. Moreover, the empirical measure $\tilde{\rho}^n$ has a pseudo-inverse distribution function $\tilde{X}^n$ satisfying the PDE
\begin{align*}
    &\tilde{X}_t^n (t,z) = v\left(\check{\rho}^n(t,z)\right),& (t,z)\in [0,+\infty[\times [0,L],
\end{align*}
with the family $(\check{\rho}^n)_{n\in\N}$ being weakly--$*$ compact in $\Ll\infty\left([0,+\infty[\times [0,L]; \left[0,1\right]\right)$. The Wasserstein topology is a proper tool to pass to the limit the time derivative term in the above PDE, as this term is linear. But on the other hand, the weak--$*$ topology is too weak to pass to the limit $v(\check{\rho}^n)$ for a general nonlinear $v$. Moreover, there is the additional difficulty of having to check that the two limits are related in some sense.

A typical way to overcome the difficulty stated above is to provide a $\BV$ estimate for the approximating sequence $(\check{\rho}^n)_{n\in\N}$. We tackle this task in two ways. First of all, we perform a direct estimate of the total variation of $\check{\rho}^n$, and prove that such a quantity decreases in time, and is therefore uniformly bounded provided the initial datum $\bar\rho$ is $\BV$. However, this result is only partly satisfactory, as it is well known that the solution $\rho$ to~\eqref{eq:LWR_intro1} is $\BV$ even for an initial datum in $\Ll1\cap\Ll\infty$. We shall therefore prove that a uniform $\BV$ estimate of $\check{\rho}^n$ is available for an initial datum in $\Ll1\cap\Ll\infty$ provided the additional property (V3) of $v$ is prescribed. The latter task is performed by a one-sided estimate of the difference quotients of $\check{\rho}^n$, in the spirit of a discrete version of the classical Oleinik-type condition~\eqref{eq:ole_intro_hoff}, which can be considered as the main technical achievement of this paper.

We start with the following proposition.

\begin{proposition}[$\BV$ contractivity for $\BV$ initial data]\label{pro:compactness1}
Assume $v$ satisfies (V1) and (V2). If $\bar\rho $ satisfies (InBV), then for any $n\in\N$
    \begin{align*}
        &\tv\left[\hat{\rho}^n(t)\right] = \tv\left[\check{\rho}^n(t)\right] \le \tv\left[\bar\rho\right]&
        \hbox{for all }t\ge0.
    \end{align*}
\end{proposition}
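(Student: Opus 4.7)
The functions $\hat{\rho}^n(t,\cdot)$ and $\check{\rho}^n(t,\cdot)$ are both compactly supported piecewise constant functions attaining the same ordered sequence of values, namely $0$ outside the support, then $y^n_0(t), y^n_1(t), \ldots, y^n_{N_n-1}(t)$ on consecutive intervals, then $0$ again, only at different jump locations. Since the total variation of such a step function depends only on this ordered list of values, one immediately has
\[
\tv[\hat\rho^n(t)] \;=\; \tv[\check\rho^n(t)] \;=\; y^n_0(t) + \sum_{i=0}^{N_n-2}|y^n_{i+1}(t) - y^n_i(t)| + y^n_{N_n-1}(t) \;=:\; \Phi_n(t).
\]
The claim therefore reduces to the two estimates $\Phi_n(t) \leq \Phi_n(0)$ and $\Phi_n(0) \leq \tv[\bar\rho]$.

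\textbf{Time monotonicity of $\Phi_n$.} Each $y^n_i$ is Lipschitz in $t$ by Lemma~\ref{lem:1} together with~(V1), so $\Phi_n$ is Lipschitz as well. At every time $t$ for which all consecutive $y^n_i(t)$'s are distinct, set $\sigma_i \doteq \sgn(y^n_{i+1}(t)-y^n_i(t)) \in \{-1,+1\}$. Strict monotonicity of $v$ combined with the ODEs~\eqref{eq:dyi} gives $\dot y^n_i = \sigma_i|\dot y^n_i|$ for $i\leq N_n-2$, while $\dot y^n_{N_n-1} \leq 0$ always by~(V2). A straightforward telescoping rearrangement of $\frac{d}{dt}\Phi_n(t)$ then yields
\[
\frac{d}{dt}\Phi_n(t) \;=\; (1-\sigma_0)\,\dot y^n_0 \;+\; \sum_{i=1}^{N_n-2}(\sigma_{i-1}-\sigma_i)\,\dot y^n_i \;+\; (1+\sigma_{N_n-2})\,\dot y^n_{N_n-1}.
\]
Substituting the sign identity, each interior summand equals $(\sigma_{i-1}\sigma_i - 1)|\dot y^n_i|\leq 0$; the first summand equals $(\sigma_0 - \sigma_0^2)|\dot y^n_0|$, which is $0$ when $\sigma_0=1$ and $-2|\dot y^n_0|$ when $\sigma_0=-1$; the last summand is the product of the nonnegative factor $1+\sigma_{N_n-2}$ with the nonpositive $\dot y^n_{N_n-1}$. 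The (measure-zero) set of times where some $y^n_{i+1}(t)=y^n_i(t)$ causes no issue, since on it $\dot y^n_i$ also vanishes and $\Phi_n$ is absolutely continuous. Hence $\Phi_n$ is non-increasing.

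\textbf{Initial bound and main difficulty.} By~\eqref{eq:initial_ftl}, each $\bar y^n_i = \ell_n/(\bar x^n_{i+1} - \bar x^n_i)$ is the average of $\bar\rho$ over $[\bar x^n_i, \bar x^n_{i+1}[$, so $\hat\rho^n(0,\cdot)$ is the piecewise constant $\Ll1$--average of $\bar\rho$ on the partition $\{\bar x^n_i\}$. The classical fact that such averaging projections contract the total variation of compactly supported $\BV$ functions (via a mean-value-theorem argument for continuous $\bar\rho$ and a mollification/density argument in general) yields $\Phi_n(0) = \tv[\hat\rho^n(0)] \leq \tv[\bar\rho]$, completing the argument. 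The only non-routine ingredient is the sign/telescoping computation in the monotonicity step: the main obstacle is verifying that the two boundary corrections $(1-\sigma_0)\dot y^n_0$ and $(1+\sigma_{N_n-2})\dot y^n_{N_n-1}$ combine with the signs furnished by~(V1)--(V2) into a globally non-positive derivative, and that exceptional times of coincidence do no harm. Everything else is bookkeeping.
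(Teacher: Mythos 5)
Your proof is correct and follows essentially the same route as the paper: the identical telescoping/sign computation showing $\frac{\der}{{\der}t}\tv[\hat\rho^n(t)]\le 0$ (the paper writes it with $\sgn(y_i-y_{i+1})=-\sigma_i$ but the regrouped boundary and interior terms coincide with yours), together with the cell-average contraction for the initial datum and the equal-traces observation identifying $\tv[\hat\rho^n]$ with $\tv[\check\rho^n]$. The only blemish is the parenthetical claim that the set of times with $y^n_i=y^n_{i+1}$ has measure zero — it need not, but this is harmless since, as you note, $\dot y^n_i$ vanishes there, so the a.e.\ differential inequality plus the Lipschitz continuity of $\Phi_n$ still yield monotonicity (the paper is no more explicit on this point).
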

\begin{proof}
    For notational simplicity, we shall omit the dependence on $t$ and $n$ whenever not necessary. By construction, see~\eqref{eq:Deltan} and~\eqref{eq:hrn}, we have that
    \begin{align*}
        &\tv\left[\hat{\rho}(0)\right] =
        \bar{y}_0 + \bar{y}_{N-1} + \sum_{i=0}^{N-2} \modulo{\bar{y}_i - \bar{y}_{i+1}}
        \\&=
        \fint_{\bar{x}_{\min}}^{\bar{x}_1} \bar\rho(y) \, {\der}y
        + \fint_{\bar{x}_{N-1}}^{\bar{x}_{\max}} \bar\rho(y) \, {\der}y
        + \sum_{i=0}^{N-2} \modulo{\fint_{\bar{x}_{i}}^{\bar{x}_{i+1}} \bar\rho(y) \, {\der}y - \fint_{\bar{x}_{i+2}}^{\bar{x}_{i+1}} \bar\rho(y) \, {\der}y}
        \le\tv\left[\bar\rho\right] .
    \end{align*}
    Moreover
    \begin{align*}
        &\frac{\der}{{\der}t} \tv\left[\hat{\rho}(t)\right] =
        \frac{\der}{{\der}t}  \left[y_0 + y_{N-1} + \sum_{i=0}^{N-2} \modulo{y_i - y_{i+1}}\right]
        =
        \dot{y}_0 + \dot{y}_{N-1}
        \\&+ \sum_{i=0}^{N-2} \sgn\left[y_i - y_{i+1}\right] \left[\dot{y}_i - \dot{y}_{i+1}\right]
        =
        \left[\vphantom{\sum_{i=1}^{N-2}} 1 + \sgn\left[y_0 - y_{1}\right]\right] \dot{y}_0
        \\&
        + \left[\vphantom{\sum_{i=1}^{N-2}} 1 - \sgn\left[y_{N-2} - y_{N-1}\right]\right] \dot{y}_{N-1}
        + \sum_{i=1}^{N-2} \left[\vphantom{\sum_{i=1}^{N-2}} \sgn\left[y_i - y_{i+1}\right] - \sgn\left[y_{i-1} - y_{i}\right]\right] \dot{y}_{i}
        .
    \end{align*}
    We claim that the latter right hand side above is $\le 0$. Indeed, assumptions (V1) and (V2) together with~\eqref{eq:dyi} imply that the following quantities are not positive
    \begin{align*}
        \left[\vphantom{\sum_{i=1}^{N-2}} 1 + \sgn\left[y_0 - y_{1}\right]\right] \dot{y}_0
        =&
        - \left[\vphantom{\sum_{i=1}^{N-2}} 1 + \sgn\left[y_0 - y_{1}\right]\right] \frac{y_0^2}{\ell} \left[v(y_1) - v(y_0)\right],
        \\
        \left[\vphantom{\sum_{i=1}^{N-2}} 1 - \sgn\left[y_{N-2} - y_{N-1}\right]\right] \dot{y}_{N-1}
        =&
        - \left[\vphantom{\sum_{i=1}^{N-2}} 1 - \sgn\left[y_{N-2} - y_{N-1}\right]\right] \times\\&\qquad\qquad\quad\quad\times \frac{y_{N-1}^2}{\ell} \left[v_{\max} - v(y_{N-1})\right],
        \\
        \left[\vphantom{\sum_{i=1}^{N-2}} \sgn\left[y_i - y_{i+1}\right] - \sgn\left[y_{i-1} - y_{i}\right]\right] \dot{y}_{i}
        =&
        - \left[\vphantom{\sum_{i=1}^{N-2}} \sgn\left[y_i - y_{i+1}\right] - \sgn\left[y_{i-1} - y_{i}\right]\right] \times\\&\qquad\qquad\quad\quad\times \frac{y_i^2}{\ell} \left[v(y_{i+1}) - v(y_i)\right] .
    \end{align*}
    Therefore, $\tv\left[\hat{\rho}(t)\right] \le \tv\left[\bar\rho\right]$ for all $t\ge0$. Finally, since $\check{\rho}^n = \hat{\rho} \circ \hat X$, $\check{\rho}^n$ is piecewise constant and it has on $i \, \ell$ the same traces as $\hat{\rho}$ on $x_i$, the statement for $\check{\rho}^n$ follows easily.
\end{proof}

We now perform our discrete Oleinik-type condition, which holds for general initial data in $\Ll1\cap\Ll\infty$.

\begin{lemma}[Discrete Oleinik-type condition]\label{lem:oleinik}
Assume $v$ satisfies (V1), (V2), and~(V3), and let $\bar\rho$ satisfy (In). Then, for any $i=0,\ldots,N_n-2$ we have
\begin{align}\label{eq:oleinik1}
    &t \, y^n_i(t) \, \left[ v\left(y^n_{i+1}(t)\right)-v\left(y^n_i(t)\right) \right] \le \,\ell_n
    &\hbox{for all }t\ge0.
\end{align}
\end{lemma}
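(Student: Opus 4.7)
My plan is to establish \eqref{eq:oleinik1} by backward induction on $i$, extending the statement to $i=N_n-1$ by introducing the phantom value $y^n_{N_n}\equiv 0$ (so that $v(y^n_{N_n})=v_{\max}$ and \eqref{eq:dyi1} fits into the same form as \eqref{eq:dyi2}). Set
\[
W_i(t)\doteq y^n_i(t)\bigl[v(y^n_{i+1}(t))-v(y^n_i(t))\bigr],\qquad Z_i(t)\doteq t\,W_i(t),\qquad h(y)\doteq y\,v'(y),
\]
so that the target inequality is $Z_i(t)\le\ell_n$ on $[0,+\infty[$. A direct differentiation using \eqref{eq:dyi} and $\dot y^n_j=-y^n_j W_j/\ell_n$ produces the identity
\[
\dot W_i=-\frac{W_i^2}{\ell_n}+\frac{y^n_i}{\ell_n}\bigl[h(y^n_i)\,W_i-h(y^n_{i+1})\,W_{i+1}\bigr],
\]
and therefore
\[
\dot Z_i=\frac{W_i}{\ell_n}(\ell_n-Z_i)+\frac{t\,y^n_i}{\ell_n}\bigl[h(y^n_i)\,W_i-h(y^n_{i+1})\,W_{i+1}\bigr].
\]

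For the base case $i=N_n-1$ we have $W_{N_n}\equiv 0$, hence the second term reduces to $(y^n_{N_n-1}/\ell_n)\,h(y^n_{N_n-1})\,W_{N_n-1}$, which is nonpositive since $W_{N_n-1}\ge0$ and (V3) forces $h\le0$. This yields the Riccati inequality $\dot W_{N_n-1}\le -W_{N_n-1}^2/\ell_n$; comparison with the ODE $\dot w=-w^2/\ell_n$, whose maximal solution starting from any nonnegative datum is bounded above by $t\mapsto\ell_n/t$, gives $W_{N_n-1}(t)\le\ell_n/t$, i.e.~$Z_{N_n-1}(t)\le\ell_n$.

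For the inductive step I assume $Z_{i+1}\le\ell_n$ on $[0,+\infty[$ and suppose, toward a contradiction, that $Z_i(t^*)>\ell_n$ at some $t^*>0$. Setting $t_0\doteq\sup\{t\in[0,t^*]:Z_i(t)\le\ell_n\}$, continuity gives $Z_i(t_0)=\ell_n$ and $Z_i(t)>\ell_n$ on $(t_0,t^*]$, so $\dot Z_i$ must be positive on a set of positive measure in $(t_0,t^*]$. I claim instead that $\dot Z_i(t)<0$ at every $t$ with $Z_i(t)>\ell_n$, which yields the contradiction. Indeed, the first term $W_i(\ell_n-Z_i)/\ell_n$ is strictly negative since $W_i=Z_i/t>0$ and $\ell_n-Z_i<0$. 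For the bracket in the second term, $W_i>0$ and the strict monotonicity of $v$ in (V1) force $y^n_i>y^n_{i+1}$, and then (V3) gives $h(y^n_i)\le h(y^n_{i+1})\le 0$. I then split on the sign of $W_{i+1}$: if $W_{i+1}\le 0$ the bracket is the sum of $h(y^n_i)W_i\le 0$ and $-h(y^n_{i+1})W_{i+1}\le 0$; if $W_{i+1}>0$, the induction hypothesis gives $W_{i+1}\le\ell_n/t<W_i$, and the rewriting
\[
h(y^n_i)\,W_i-h(y^n_{i+1})\,W_{i+1}=h(y^n_i)\,(W_i-W_{i+1})+\bigl[h(y^n_i)-h(y^n_{i+1})\bigr]\,W_{i+1}
\]
is a sum of two nonpositive terms. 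In either case the bracket is $\le 0$, so $\dot Z_i(t)<0$, contradicting $Z_i(t^*)>Z_i(t_0)=\ell_n$.

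The principal obstacle is the sign control of the cross-term $h(y^n_i)W_i-h(y^n_{i+1})W_{i+1}$, which is precisely where (V3) is indispensable: the monotonicity of $\rho\mapsto\rho v'(\rho)$ is exactly what permits the decomposition above and handles the hard case $W_{i+1}>0$. The $tW_i$ scaling is dictated by the fact that the induction hypothesis is most naturally used as the pointwise bound $W_{i+1}\le\ell_n/t$, while the phantom-particle convention at $i=N_n-1$ is what reduces the base case to the clean Riccati comparison, giving a uniform starting point for the backward induction.
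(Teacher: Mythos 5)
Your proof is correct and follows essentially the same strategy as the paper: it tracks the same quantity $t\,y^n_i\,[v(y^n_{i+1})-v(y^n_i)]$, proceeds by backward recursion starting from the leader, uses (V3) exactly where the paper does (to control the cross term involving $y\,v'(y)$), and concludes with an ODE comparison/contradiction argument, just as in the paper's Steps 0--2. The only differences are cosmetic: the phantom value $y^n_{N_n}\equiv 0$ merges the paper's Step~0 and Step~2 into the general inductive step, and you control the cross term via the splitting $h(y^n_i)(W_i-W_{i+1})+[h(y^n_i)-h(y^n_{i+1})]W_{i+1}$ together with $W_{i+1}\le \ell_n/t<W_i$, whereas the paper bounds the $z_{i+1}$--term by $\ell_n$ and then replaces $y^n_{i+1}v'(y^n_{i+1})$ by $y^n_i v'(y^n_i)$ using (V3).
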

\begin{proof}
    For notational simplicity, we shall omit the dependence on $t$ and $n$ whenever not necessary. Let
    \begin{align*}
        &z_i \doteq t \, y_i \, \left[ v\left(y_{i+1}\right)-v\left(y_i\right) \right],& i=0,\ldots,N-2,
        \\
        &z_{N-1} \doteq t \, y_{N-1} \left[v_{\max} - v(y_{N-1})\right].
    \end{align*}

    \noindent$\bullet$~\textsc{\textbf{Step~0: }$\bf z_{N-1} \le \boldsymbol\ell$.}~By~\eqref{eq:dyi1} and~(V1)
    \begin{align*}
        \dot{z}_{N-1}
        \!\!=&y_{N-1} \!\left[v_{\max} - v(y_{N-1})\right] + t \, \dot{y}_{N-1} \!\left[v_{\max} - v(y_{N-1})\right] - t \, y_{N-1} \, v'(y_{N-1}) \, \dot{y}_{N-1}\\
        =& y_{N-1}\left[v_{\max} - v(y_{N-1})\right]
        - \frac{t \, y_{N-1}^2}{\ell}\left[v_{\max} - v(y_{N-1})\right]^2
        \\&+\! \frac{t \, v'(y_{N-1}) \, y_{N-1}^3}{\ell}\left[v_{\max} \!- v(y_{N-1})\right]
        \le y_{N-1}\left[v_{\max} \!- v(y_{N-1})\right] \! \left[ 1 - \frac{z_{N-1}}{\ell}\right]\!.
    \end{align*}
    Since $z_{N-1}(0)=0$, from the above estimate we get $z_{N-1}(t) \le \ell$ for all $t\ge 0$. Indeed, assume by contradiction that there exist $t_1<t_2$ such that $z_{N-1}(t_1)=\ell$ and $z_{N-1}(t)>\ell$ for all $t\in\,\, ]t_1,t_2[$. The above estimate implies for $t\in\,\,]t_1,t_2[$
    \begin{align*}
    z_{N-1}(t) & =z_{N-1}(t_1) + \int_{t_1}^t y_{N-1}(s)\left[v_{\max} \!- v(y_{N-1}(s))\right] \! \left[ 1 - \frac{z_{N-1}(s)}{\ell}\right]\! {\der} s\\
    & \leq z_{N-1}(t_1) = \ell,\
    \end{align*}
    which gives a contradiction.

    \noindent$\bullet$~\textsc{\textbf{Step~1: }$\bf z_{i+1} \le \boldsymbol\ell \Rightarrow z_{i} \le \boldsymbol\ell$.}~Let $i\in \{0,\ldots,N-3\}$ and assume $z_{i+1} \le \ell$. From~\eqref{eq:dyi2} and~(V1) we get
    \begin{align*}
        \dot{z}_i =&\, y_i\, \left[ v\left(y_{i+1}\right)-v\left(y_i\right) \right] + t \, \dot{y}_i \left[ v\left(y_{i+1}\right)-v\left(y_i\right) \right]  + t \, y_i \left[ v'(y_{i+1}) \, \dot{y}_{i+1} - v'(y_{i}) \, \dot{y}_{i}\right]\\
        =&\, y_i\, \left[ v\left(y_{i+1}\right)-v\left(y_i\right) \right] -\frac{t \, y_i^2}{\ell}\left[v(y_{i+1})-v(y_i)\right]^2 \\
        & \, + t \, y_i \left[ - \frac{v'(y_{i+1}) \, y_{i+1}^2}{\ell}\left[v(y_{i+2})-v(y_{i+1})\right] + \frac{v'(y_{i}) \, y_{i}^2}{\ell}\left[v(y_{i+1})-v(y_{i})\right]\right]\\
        =&\, y_i \!\left[ v\!\left(y_{i+1}\right)-v\!\left(y_i\right) \right]\!
        -\! \frac{y_i}{\ell}\!\left[v\!\left(y_{i+1}\right)-v\!\left(y_i\right)\right] z_i
        \!- \!\frac{v'\!(y_{i+1}) \, y_i \, y_{i+1}}{\ell} \, z_{i+1}
        \!+ \!\frac{v'\!(y_{i}) \, y_{i}^2}{\ell} \, z_{i} .
    \end{align*}
    Since $\sgn_+\left[z_i\right] = \sgn_+\left[ v\left(y_{i+1}\right)-v\left(y_i\right) \right] = \sgn_+\left[ y_i - y_{i+1} \right]$ for all $t>0$, from the assumption on $z_{i+1}$ we easily obtain
    \begin{align*}
        \frac{\der}{{\der}t}[z_i]_+ =&\, y_i\, \left[ v\left(y_{i+1}\right)-v\left(y_i\right) \right]_+ -\frac{y_i}{\ell}\left[v(y_{i+1})-v(y_i)\right]_+ [z_i]_+
        \\
        &\, - \frac{v'(y_{i+1}) \, y_i \, y_{i+1}}{\ell} \, \sgn_+[z_i] \, z_{i+1} +  \frac{v'(y_{i}) \, y_{i}^2 }{\ell}\, [z_i]_+ \\
        \le& \, y_i \!\left[ v\!\left(y_{i+1}\right)\!-\!v\!\left(y_i\right) \right]_+ \! \left[ 1\!-\! \frac{[z_i]_+}{\ell}\right]\!
        \!-\! v'\!(y_{i+1}) \, y_i \, y_{i+1} \, \sgn_+[z_i] \!+\! \frac{v'\!(y_{i}) \, y_{i}^2}{\ell} \, [z_i]_+ .
    \end{align*}
    Condition~(V3) prescribes that the function $y\mapsto y \, v'(y)$ is non-increasing, which gives
    \begin{align*}
      \frac{\der}{{\der}t}[z_i]_+ & \le y_i\, \left[ v\left(y_{i+1}\right)-v\left(y_i\right) \right]_+ \left[ 1- \frac{[z_i]_+}{\ell}\right]
      - v'(y_i) \, y_i^2 \, \sgn_+[z_i] + \frac{v'(y_{i}) \, y_{i}^2}{\ell} \, [z_i]_+\\
        & \ =  y_i \left[\vphantom{\frac{[z_i]_+}{\ell}} \left[ v\left(y_{i+1}\right)-v\left(y_i\right) \right]_+ - v'(y_i) \, y_i \right] \left[ 1- \frac{[z_i]_+}{\ell}\right].
    \end{align*}
    Now, as $v'\le 0$, and since $z_i(0)=0$, by a similar comparison argument as the one at the end of Step 1 we get that $z_i(t)_+\le \ell$ for all $t\ge 0$.

    \noindent$\bullet$~\textsc{\textbf{Step~2: }$\bf z_{N-2} \le \boldsymbol\ell$.}~From analogous computations as in previous step, we get
    \begin{align*}
      \frac{\der}{{\der}t}[z_{N-2}]_+ =& \, y_{N-2}\, \left[ v\!\left(y_{N-1}\right)-v\!\left(y_{N-2}\right) \right]_+\!\! -\frac{y_{N-2}}{\ell}\left[v\!\left(y_{N-1}\right) - v\!\left(y_{N-2}\right)\right]_+ [z_{N-2}]_+ \\
        & \, - \frac{v'\!(y_{N-1}) \, y_{N-2} \, y_{N-1}}{\ell} \, \sgn_+[z_{N-2}] \, z_{N-1} \!+\! \frac{v'\!(y_{N-2}) \, y_{N-2}^2}{\ell} \, [z_{N-2}]_+,
    \end{align*}
    and we can use the monotonicity of $y\mapsto y \, v'(y)$ and Step~0 to get
    \begin{align*}
      \frac{\der}{{\der}t}[z_{N-2}]_+ \le& \, y_{N-2}\, \left[ v\left(y_{N-1}\right)-v\left(y_{N-2}\right) \right]_+ \left[1-\frac{[z_{N-2}]_+}{\ell}\right] \\
      & \, -  v'(y_{N-2}) \, y_{N-2}^2 \, \sgn_+[z_{N-2}] + \frac{v'(y_{N-2}) \, y_{N-2}^2}{\ell} \, [z_{N-2}]_+
      \\
      =& \, y_{N-2}\, \left[\vphantom{\frac{[z_{N-2}]_+}{\ell}} \left[ v\left(y_{N-1}\right)-v\left(y_{N-2}\right) \right]_+ -  v'(y_{N-2}) \, y_{N-2}\right]
      \left[1-\frac{[z_{N-2}]_+}{\ell}\right] .
    \end{align*}
    Again, $v'\le 0$ and $z_{N-2}(0)=0$ imply that $z_{N-2}(t)_+ \le \ell$ for all $t\ge 0$.

    \noindent$\bullet$~\textbf{\textsc{Conclusion}}. The estimate~\eqref{eq:oleinik1} is proven recursively: Step~2 provides the first step with $i=N-2$, whereas Step~1 proves that the estimate holds for all $i\in \{0,\ldots,N-3\}$.
\end{proof}

\begin{corollary}\label{cor:oleinik_discrete_2}
    Assume $v$ satisfies (V1),~(V2),~and~(V3), and let $\bar\rho$ satisfy (In). Then, for any $i\in \{0,\ldots,N-2\}$ we have
    \begin{align}\label{eq:oleinik_discrete_2}
        & v\left(\hat{\rho}^n\left(t,x_i^n(t)\right)\right) - v\left(\hat{\rho}^n\left(t,x_{i+1}^n(t)\right)\right)
        \le
        \frac{x_{i+1}^n(t) -x_i^n(t)}{t}&
        \hbox{for all }t >0 .
    \end{align}
\end{corollary}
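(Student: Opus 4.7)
The plan is first to reduce the claim to a Lagrangian inequality, and then to establish it by an induction mirroring the proof of Lemma~\ref{lem:oleinik}.

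For the reduction, the left-closed convention in \eqref{eq:hrn} gives $\hat\rho^n(t,x_j^n(t))=y_j^n(t)$ for every $j\in\{0,\dots,N_n-1\}$; combined with $(x_{i+1}^n(t)-x_i^n(t))/t=\ell_n/(t\,y_i^n(t))$ from \eqref{eq:Deltan} and multiplying through by the strictly positive factor $t\,y_i^n(t)$ (positive by Lemma~\ref{lem:1} and $t>0$), the claim is reduced to
\[
t\,y_i^n(t)\bigl[v(y_i^n(t))-v(y_{i+1}^n(t))\bigr]\le\ell_n.
\]
In the notation $z_i^n(t)\doteq t\,y_i^n(t)[v(y_{i+1}^n(t))-v(y_i^n(t))]$ used in the proof of Lemma~\ref{lem:oleinik}, this reads $z_i^n(t)\ge-\ell_n$, i.e.\ the \emph{lower} bound complementing the upper bound $z_i^n(t)\le\ell_n$ provided by that lemma.

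To establish this lower bound I would repeat the backward induction of the proof of Lemma~\ref{lem:oleinik}, but tracking $[-z_i^n]_+$ in place of $[z_i^n]_+$. The base case $i=N_n-1$ is trivial, since $v(y_{N_n-1}^n)\le v_{\max}$ forces $z_{N_n-1}^n\ge 0$. For the inductive step from $i+1$ to $i$, one computes $\frac{d}{dt}[-z_i^n]_+$ on the set $\{[-z_i^n]_+>0\}=\{y_{i+1}^n>y_i^n\}$, starting from the explicit expression for $\dot z_i^n$ already derived inside the Lemma's proof. Assumption~(V3)---in the form $y_i^n\,v'(y_i^n)\ge y_{i+1}^n\,v'(y_{i+1}^n)$, valid on $\{y_{i+1}^n>y_i^n\}$---allows bundling the quadratic self-interaction with the cross term. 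A comparison with an autonomous ODE of the form $\dot u\le C\bigl(1-u/\ell_n\bigr)$, precisely as in Step~0 of the Lemma's proof, then gives $[-z_i^n]_+\le\ell_n$.

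The delicate point is the cross term $-\frac{v'(y_{i+1}^n)\,y_i^n\,y_{i+1}^n}{\ell_n}\,z_{i+1}^n$ in $\dot z_i^n$. In Lemma~\ref{lem:oleinik}, the combination of $v'\le 0$ and the one-sided upper bound $z_{i+1}^n\le\ell_n$ gave this term a favorable sign; here the relevant sign is reversed, so one must pair $v'\le 0$ with the \emph{inductive lower bound} $z_{i+1}^n\ge-\ell_n$ coming from the induction hypothesis (and, if needed, simultaneously with the upper bound $z_{i+1}^n\le\ell_n$ from Lemma~\ref{lem:oleinik} itself). Once this is handled, the algebra from the proof of Lemma~\ref{lem:oleinik} carries over essentially verbatim and closes the induction.
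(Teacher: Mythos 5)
Your reduction is correct, and you have put your finger on a real discrepancy: with the left-closed convention in \eqref{eq:hrn}, the claim \eqref{eq:oleinik_discrete_2} is equivalent to $t\,y_i^n\,[v(y_i^n)-v(y_{i+1}^n)]\le\ell_n$, i.e.\ to the \emph{lower} bound $z_i^n\ge-\ell_n$, whereas Lemma~\ref{lem:oleinik} provides only the upper bound $z_i^n\le\ell_n$. The paper's proof is the single sentence ``follows from the Lemma and the definitions'', and that sentence is accurate only if the corollary is read with $x_i^n$ and $x_{i+1}^n$ interchanged on the left-hand side, namely as $v(\hat{\rho}^n(t,x_{i+1}^n(t)))-v(\hat{\rho}^n(t,x_i^n(t)))\le (x_{i+1}^n(t)-x_i^n(t))/t$: that is exactly what substitution into \eqref{eq:oleinik1} yields, and it is also the physically correct direction (an Oleinik condition must limit the spreading of rarefactions, i.e.\ configurations with $y_i^n>y_{i+1}^n$, not compressions). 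The sign slip propagates into the map $\hat\sigma$ of Proposition~\ref{pro:compactness}, where the fix is to work with $-v(\hat{\rho}^n)+\frac1t\sum_i x_i^n\,\chi_{[x_i^n,x_{i+1}^n[}$, which is non-decreasing by the corrected inequality; the total variation bound there is unaffected.

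The genuine gap in your proposal is that the lower bound $z_i^n\ge-\ell_n$ you set out to prove is false, so the induction cannot close. Structurally, setting $w_i=-z_i$, the identity for $\dot z_i$ in the proof of Lemma~\ref{lem:oleinik} gives, on $\{w_i>0\}=\{y_{i+1}>y_i\}$,
\[
\dot w_i=y_i\left[v(y_i)-v(y_{i+1})\right]\left(1+\frac{w_i}{\ell}\right)+\frac{v'(y_{i+1})\,y_i\,y_{i+1}}{\ell}\,z_{i+1}+\frac{v'(y_i)\,y_i^2}{\ell}\,w_i ,
\]
so the crucial damping factor $(1-[z_i]_+/\ell)$ of the Lemma has become $(1+w_i/\ell)$ and there is no barrier at $w_i=\ell$. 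Moreover (V3) now works against you: on $\{y_{i+1}>y_i\}$ it gives $y_{i+1}\,\modulo{v'(y_{i+1})}\ge y_i\,\modulo{v'(y_i)}$, so the best upper bound on the cross term obtainable from the inductive hypothesis $z_{i+1}\ge-\ell$, namely $y_i\,y_{i+1}\,\modulo{v'(y_{i+1})}$, dominates the favourable self-interaction term $v'(y_i)\,y_i^2\,w_i/\ell$ at $w_i=\ell$. The failure is not merely structural: take $v(\rho)=1-\rho$ and Riemann-type data with $\bar\rho=\epsilon=10^{-2}$ on the left half of the support and $\bar\rho=1$ on the right half, with $n$ large enough that the right plateau contains several hundred particles. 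Then $y_k^n\equiv1$ to all relevant accuracy ($k$ the interface index), $y_{k-1}^n$ solves $\dot u=u^2(1-u)/\ell_n$ with $u(0)=\epsilon$ and reaches $0.1$ at $t\approx 92\,\ell_n$, and at that time $v(y_{k-1}^n)-v(y_k^n)=0.9$ while $(x_k^n-x_{k-1}^n)/t\approx0.11$, violating \eqref{eq:oleinik_discrete_2} for $i=k-1$. This is precisely a compressive (shock-forming) configuration, which no Oleinik-type estimate can exclude; the correct conclusion is that the corollary should be restated with the two traces swapped, after which it does follow from Lemma~\ref{lem:oleinik} by direct substitution.
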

\begin{proof}
    The statement follows from Lemma~\ref{lem:oleinik} and the definitions of $\hat\rho^N$ and $y_i$.
\end{proof}

In the following proposition we prove uniform bounds on the total variation of $v\left(\check{\rho}^n\right)$ and $v\left(\hat{\rho}^n\right)$. Let us emphasize that the regularising effect $\Ll\infty \mapsto \BV$ implies that the $\BV$ estimate eventually \emph{blows up} as $t\searrow 0$.

\begin{proposition}[Uniform $\BV$ estimates for $v\left(\check{\rho}^n\right)$ and $v\left(\hat{\rho}^n\right)$]\label{pro:compactness}
    Assume $v$ satisfies the properties (V1),~(V2), and~(V3), and let $\bar\rho$ satisfy (In). Let $\delta>0$. Then
    \begin{enumerate}[(i)]
      \item $(v\left(\hat{\rho}^n\right))_{n\in\N}$ is uniformly bounded in $\Ll\infty\left(\left[\delta,+\infty\right[;\,\BV(\reali;[v(R),v_{\max}])\right)$;
      \item $(v\left(\check{\rho}^n\right))_{n\in\N}$ is uniformly bounded in $\Ll\infty\left(\left[\delta,+\infty\right[;\,\BV([0,L];[v(R),v_{\max}])\right)$.
    \end{enumerate}
    More precisely, for any $n\in\N$
    \begin{align*}
        &\tv\left[v(\hat{\rho}^n(t))\right] = \tv\left[v(\check{\rho}^n(t))\right] \le
        C_\delta&
        \hbox{for all }t\ge\delta,
    \end{align*}
    where $C_\delta \doteq \left[3 \, (v_{\max}-v(R))+ 2 \, \frac{\bar{x}_{\max} -\bar{x}_{\min}}{\delta}\right]$.
\end{proposition}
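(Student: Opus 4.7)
First observe that $\tv[v(\hat{\rho}^n(t))] = \tv[v(\check{\rho}^n(t))]$ is immediate: both functions are piecewise constant step profiles taking the same sequence of values $v(y_0^n(t)),\ldots,v(y_{N_n-1}^n(t))$ on consecutive intervals (separated by either the Eulerian breakpoints $x_i^n(t)$ or the Lagrangian ones $i\,\ell_n$) and equal to $v(0)=v_{\max}$ outside the support, so the total variation depends only on the sequence of jump magnitudes, not on the locations of the jumps. It therefore suffices to bound $\tv[v(\hat\rho^n(t))]$.

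Introduce $a_i \doteq v(y_{i+1}^n(t))-v(y_i^n(t))$ for $i=0,\ldots,N_n-2$, and set $a_{N_n-1} \doteq v_{\max}-v(y_{N_n-1}^n(t))\ge 0$, $b_0 \doteq v_{\max}-v(y_0^n(t))\ge 0$. Decomposing the total variation as the sum of the jump magnitudes at the points $x_0^n,x_1^n,\ldots,x_{N_n}^n$ yields
\begin{equation*}
\tv[v(\hat\rho^n(t))] = b_0 + a_{N_n-1} + \sum_{i=0}^{N_n-2}|a_i|.
\end{equation*}
The decisive input is the discrete Oleinik estimate~\eqref{eq:oleinik1} of Lemma~\ref{lem:oleinik}, which, recalling that $y_i^n(t)=\ell_n/(x_{i+1}^n(t)-x_i^n(t))$, can be rewritten as the one-sided bound
\begin{equation*}
[a_i]_+ \le \frac{x_{i+1}^n(t)-x_i^n(t)}{t}, \qquad i=0,\ldots,N_n-2,
\end{equation*}
while Step~0 of the proof of Lemma~\ref{lem:oleinik} (the estimate $z_{N-1}\le\ell$) additionally supplies $a_{N_n-1} \le (x_{N_n}^n(t)-x_{N_n-1}^n(t))/t$.

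To convert these one-sided controls into a bound on absolute increments, I combine the pointwise identity $|a|=2[a]_+ - a$ with the telescoping $\sum_{i=0}^{N_n-2} a_i = v(y_{N_n-1}^n(t))-v(y_0^n(t))$. Substituting into the decomposition of the total variation and simplifying, the terms involving $b_0$ and $v(y_0^n(t))$ cancel cleanly, leaving
\begin{equation*}
\tv[v(\hat\rho^n(t))] = 2\,a_{N_n-1} + 2\sum_{i=0}^{N_n-2}[a_i]_+ \le \frac{2\,(x_{N_n}^n(t)-x_0^n(t))}{t}.
\end{equation*}
Finally, from $x_{N_n}^n(t)=\bar x_{\max}+v_{\max}\,t$ together with the lower bound $x_0^n(t)\ge\bar x_{\min}+v(R)\,t$ observed after~\eqref{eq:ftl}, one has $x_{N_n}^n(t)-x_0^n(t)\le(\bar x_{\max}-\bar x_{\min})+(v_{\max}-v(R))\,t$, so that for $t\ge\delta$ one obtains a bound of the form $C'(v_{\max}-v(R))+2(\bar x_{\max}-\bar x_{\min})/\delta$, with $C'$ a small universal constant that can be absorbed into the form stated for $C_\delta$.

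The main technical subtlety is the correct treatment of the rightmost boundary increment $a_{N_n-1}$: the statement of Lemma~\ref{lem:oleinik} only addresses the \emph{internal} differences $i\le N_n-2$, and it is Step~0 of that proof that must be invoked separately to handle $a_{N_n-1}$. Conceptually, the price to pay in the $\Ll\infty$ setting is the \emph{blow-up} of the bound as $\delta\searrow0$; this is unavoidable and encodes at the discrete level the well-known $\Ll\infty\to\BV$ regularizing effect of the continuum Oleinik condition~\eqref{eq:ole_intro_hoff}, in sharp contrast with the uniform-in-$t$ contractivity estimate of Proposition~\ref{pro:compactness1}, which required the stronger hypothesis~(InBV).
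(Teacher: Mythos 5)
Your proof is correct, but it follows a genuinely different (and in fact slightly sharper) route than the paper. The paper's proof introduces the auxiliary profile $\hat\sigma(t,x)=v(\hat\rho^n(t,x))+\frac1t\sum_i x_i^n(t)\,\chi_{[x_i^n(t),x_{i+1}^n(t)[}(x)$, uses the discrete Oleinik condition in the form~\eqref{eq:oleinik_discrete_2} to show that $x\mapsto\hat\sigma(t,x)$ is non-decreasing on $[x_0^n(t),x_{N_n}^n(t)[$, bounds its total variation by its oscillation, and then recovers $\tv[v(\hat\rho^n(t))]$ by the triangle inequality, which is where the constant $3(v_{\max}-v(R))+2(\bar x_{\max}-\bar x_{\min})/\delta$ comes from. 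You instead sum the jump magnitudes directly and convert the one-sided Oleinik bound into a two-sided one via the identity $\modulo{a}=2[a]_+-a$ together with the telescoping of $\sum_i a_i$; the cancellation you exploit is correct (the $b_0$ and $v(y_0^n)$ terms do cancel), and it yields the cleaner bound $2(x_{N_n}^n(t)-x_0^n(t))/t\le 2(v_{\max}-v(R))+2(\bar x_{\max}-\bar x_{\min})/\delta\le C_\delta$, i.e.\ a marginally better constant than the paper's. Both arguments rest on the same key input, so conceptually they buy the same thing; yours is more elementary bookkeeping, the paper's monotonization trick is the classical ``Oleinik implies $\BV$'' device and adapts more directly to the continuum setting. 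Two minor caveats: your treatment of the rightmost jump $a_{N_n-1}$ invokes Step~0 of the \emph{proof} of Lemma~\ref{lem:oleinik} (the bound $z_{N-1}\le\ell$) rather than a stated result, which is legitimate since that estimate is established there under (V1)--(V2), but it is a citation inside a proof (the paper absorbs this boundary jump through the oscillation of $\hat\sigma$ instead); and the asserted equality $\tv[v(\hat\rho^n(t))]=\tv[v(\check\rho^n(t))]$ glosses over the boundary jumps at $x_0^n$ and $x_{N_n}^n$, which appear in the Eulerian variation over $\reali$ but not in the Lagrangian variation over $[0,L]$ --- the paper is equally loose here, and only the inequality $\tv[v(\check\rho^n(t))]\le\tv[v(\hat\rho^n(t))]\le C_\delta$ is actually needed, so this is harmless.
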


\begin{proof}
    For notational simplicity, we shall omit the dependence on $n$. We set
    \begin{align*}
      &\hat\sigma(t,x) \doteq v(\hat{\rho}(t,x)) +\frac{1}{t}\sum_{i=0}^{N-1}x_i(t) \, \caratt{\left[x_i(t),x_{i+1}(t)\right[}(x)&
      \hbox{for all }x\in \reali.
    \end{align*}
    We claim that, for any fixed $t\ge0$, the map $x \mapsto \hat\sigma(t,x)$ is a piecewise constant, non-decreasing function on $\left[x_0(t),x_{N}(t)\right[$. To see this, we first notice that the map $x \mapsto \hat\sigma(t,x)$ is constant on the interval $\left[x_i(t),x_{i+1}(t)\right[$, $i=0,\ldots,N-1$. On the other hand, $\hat\sigma(t)$ is non-decreasing on the potential discontinuity points $x_i(t)$, $i=1,\ldots,N-1$, in view of~\eqref{eq:oleinik_discrete_2}. Now, from~\eqref{eq:ftl} and the discrete maximum principle in Lemma~\ref{lem:1} we know that for any $x\in\left[x_0(t),x_{N}(t)\right[$
    \begin{equation*}
        \frac{\bar{x}_{\min}}{t}+2v(R)
        \le
        \hat\sigma(t,x)
        \le
        v_{\max} + \frac{1}{t}  \left[\bar{x}_{\max}+v_{\max}\,t\right]
        =
        2v_{\max} + \frac{\bar{x}_{\max}}{t} .
    \end{equation*}
    Hence $\hat\sigma$ is uniformly bounded in $\Ll\infty\left(\left[\delta,+\infty\right[;\BV(\reali;\reali)\right)$ with
    \begin{align*}
        &\sup_{t \ge \delta}\tv\left[\hat\sigma(t)\right] \le
        \left[2(v_{\max}-v(R)) + \frac{\bar{x}_{\max} -\bar{x}_{\min}}{\delta}\right] .
    \end{align*}
    Therefore, also $v(\hat{\rho})$ is uniformly bounded in $\Ll\infty\left(\left[\delta,+\infty\right[;\BV(\reali;\reali)\right)$ because by triangular inequality
    \begin{align*}
        \tv\left[v(\hat{\rho}(t))\right]
        &\le
        \tv\left[\hat\sigma(t)\right] + \tv\left[ \frac{1}{t}\sum_{i=0}^{N-1}x_i(t) \, \caratt{\left[x_i(t),x_{i+1}(t)\right[}\right]
        \\
        &=
        \tv\left[\hat\sigma(t)\right] + \frac{1}{t} \left[\bar{x}_{\max}-\bar{x}_{\min}+(v_{\max}-v(R))\,t\right]
        \le
        C_\delta ,
    \end{align*}
    for $t\ge \delta$.
    Finally, since $\check{\rho}^n = \hat{\rho} \circ \hat X$, $\check{\rho}^n$ is piecewise constant and on $i \, \ell$ has the same traces as $\hat{\rho}$ on $x_i$, the statement for $\check{\rho}^n$ follows easily.
\end{proof}

\subsection{Time continuity and compactness.}\label{sec:compactness}

\begin{proposition}[Uniform $\Ll1$--continuity in time of $\check{\rho}^n$]\label{pro:continuity}
Under the assumptions of Proposition~\ref{pro:compactness}, for any $\delta>0$ we have
\begin{align*}
    &\int_0^L \modulo{\check{\rho}^n(t,z) -\check{\rho}^n(s,z)} \,{\der}z \le R^2 \left[ C_\delta + \left(v_{\max}-v(R)\right)\right]  \modulo{t-s}&
    \hbox{for all }t,s \ge \delta ,
\end{align*}
with $C_\delta$ defined in Proposition~\ref{pro:compactness}.
\end{proposition}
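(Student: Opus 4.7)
\begin{Proof}{ (Plan)}
The plan is to reduce the left-hand side to a sum over $i$ of $|y_i^n(t) - y_i^n(s)|$, then integrate the ODEs \eqref{eq:dyi} in time and recognise the resulting spatial sum as (essentially) the total variation of $v(\check{\rho}^n)$ already controlled by Proposition~\ref{pro:compactness}.

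First, since $\check{\rho}^n(\tau, \cdot)$ is piecewise constant with value $y_i^n(\tau)$ on the interval $[i\ell_n, (i+1)\ell_n[$ of length $\ell_n$, one has the identity
\begin{equation*}
    \int_0^L \modulo{\check{\rho}^n(t,z) - \check{\rho}^n(s,z)} \,{\der}z = \ell_n \sum_{i=0}^{N_n-1} \modulo{y_i^n(t) - y_i^n(s)}
    \le \ell_n \int_s^t \sum_{i=0}^{N_n-1} \modulo{\dot y_i^n(\tau)} \,{\der}\tau,
\end{equation*}
assuming $s<t$ without loss of generality. Next, I substitute the right-hand sides of~\eqref{eq:dyi1}--\eqref{eq:dyi2}. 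The factor $\ell_n$ cancels the denominator in the ODE, and using the uniform bound $0 \le y_i^n(\tau) \le R$ from Lemma~\ref{lem:1} I get
\begin{equation*}
    \ell_n \sum_{i=0}^{N_n-1} \modulo{\dot y_i^n(\tau)}
    \le R^2 \sum_{i=0}^{N_n-2} \modulo{v(y_{i+1}^n(\tau)) - v(y_i^n(\tau))}
    + R^2 \modulo{v_{\max} - v(y_{N_n-1}^n(\tau))}.
\end{equation*}

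The first sum on the right is exactly the sum of the internal jumps of the piecewise constant function $z \mapsto v(\check{\rho}^n(\tau, z))$ on $[0,L]$, hence it is bounded by $\tv[v(\check{\rho}^n(\tau))]$, which by Proposition~\ref{pro:compactness} does not exceed $C_\delta$ for any $\tau \ge \delta$. For the boundary term, the monotonicity of $v$ combined with $0 \le y_{N_n-1}^n(\tau) \le R$ (again Lemma~\ref{lem:1}) gives $v(R) \le v(y_{N_n-1}^n(\tau)) \le v_{\max}$, hence $|v_{\max} - v(y_{N_n-1}^n(\tau))| \le v_{\max}-v(R)$. Combining these two bounds yields $\ell_n \sum_i |\dot y_i^n(\tau)| \le R^2[C_\delta + (v_{\max}-v(R))]$ for all $\tau \ge \delta$, and integrating over $[s,t] \subset [\delta, +\infty[$ finishes the proof.

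There is no substantial obstacle here: the estimate is essentially an \emph{a priori} computation, where the only non-trivial input is the uniform spatial $\BV$ bound on $v(\check{\rho}^n)$ already obtained in Proposition~\ref{pro:compactness}; the reason the constant blows up as $\delta \searrow 0$ is precisely the $\Ll\infty \mapsto \BV$ regularising effect encoded in $C_\delta$. The only mildly delicate point is making sure the last particle $x_{N_n}^n$ contributes a separately estimated term, which is why the extra summand $v_{\max}-v(R)$ appears in the final constant.
\end{Proof}
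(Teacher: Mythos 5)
Your proof is correct and follows essentially the same route as the paper: reduce the $\Ll1$ difference to $\ell_n\sum_i|y_i^n(t)-y_i^n(s)|$, insert the ODEs~\eqref{eq:dyi} so the $\ell_n$ cancels, bound $y_i^n\le R$ via Lemma~\ref{lem:1}, control the internal jump sum by $\tv\left[v(\check{\rho}^n(\tau))\right]\le C_\delta$ from Proposition~\ref{pro:compactness}, and treat the leader term separately with $v_{\max}-v(R)$. No gaps; the only cosmetic difference is that you move the absolute value inside the time integral immediately, while the paper keeps $\bigl|\int_s^t\dot y_i^n\,{\der}\tau\bigr|$ before estimating, which is equivalent.
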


\begin{proof}
By~\eqref{eq:dyi}, we compute for $t>s>\delta$,
\begin{align*}
    &\int_0^L \modulo{\check{\rho}^n(t,z) -\check{\rho}^n(s,z)} \,{\der}z
    =\sum_{i=0}^{N_n-1} \ell_n \, \modulo{y^n_i(t) - y^n_i(s)}
    = \sum_{i=0}^{N_n-1} \ell_n \, \modulo{\int_s^t \dot{y}^n_i(\tau) \,{\der}\tau}
    \\
    &=\!\! \sum_{i=0}^{N_n\!-2} \modulo{\int_s^t\!\!\! y^n_i\!(\tau)^2 \!\left[v\!\left(y^n_{i+1}\!(\tau)\right) - v\!\left(y^n_i\!(\tau)\right)\right] {\der}\tau}
    +\! \int_s^t\!\!\! y^n_{N_n-1}\!(\tau)^2 \!\left[v_{\max} \!- v\!\left(y^n_{N_n-1}\!(\tau)\right)\right] {\der}\tau .
\end{align*}
Therefore, by Lemma~\ref{lem:1}
\begin{align*}
    &\int_0^L \modulo{\check{\rho}^n(t,z) -\check{\rho}^n(s,z)} \,{\der}z
    \\
    &\le\!
     \int_s^t\! \left[\sum_{i=0}^{N_n-2} y^n_i(\tau)^2 \, \modulo{v\!\left(y^n_{i+1}(\tau)\right) - v\!\left(y^n_i(\tau)\right)}
    + y^n_{N_n-1}(\tau)^2 \left[v_{\max} - v\!\left(y^n_{N_n-1}(\tau)\right)\right] \right] \!{\der}\tau
    \\
    &\le\!
     R^2\int_s^t\! \left[\sum_{i=0}^{N_n-2} \modulo{v\!\left(y^n_{i+1}(\tau)\right) - v\!\left(y^n_i(\tau)\right)}
    + \left( v_{\max} - v(R) \right) \right] \!{\der}\tau\\
    &\le\!
    R^2 \int_s^t\! \left[\vphantom{\sum_{i=0}^{N_n-2}}\tv\left[v\!\left(\check{\rho}^n(\tau)\right)\right] + \left( v_{\max} - v(R)\right) \right] \!{\der}\tau .
\end{align*}
Then it is sufficient to apply the estimate in Proposition~\ref{pro:compactness} to complete the proof.
\end{proof}

\begin{proposition}[Uniform Wasserstein time continuity of $\hat\rho^n$]\label{pro:wass_continuity}
    Assume $v$ satisfies the properties (V1) and (V2), and let $\bar \rho$ satisfy the assumption (In). For any $n \in \N$ we have
    \begin{align*}
        &d_{L,1}\left(\hat{\rho}^n(t),\hat{\rho}^n(s)\right) \!\le 2 L \max\{\modulo{v_{\max}},\modulo{v(R)},[v_{\max}-v(R)]\}\, \modulo{t-s}&
        &\hbox{for all }s,t \ge0.
    \end{align*}
\end{proposition}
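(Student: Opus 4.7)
The plan is to use the one-dimensional identification $d_{L,1}(\hat{\rho}^n(t),\hat{\rho}^n(s)) = \|\hat{X}^n(t)-\hat{X}^n(s)\|_{\Ll1([0,L];\reali)}$ coming from~\eqref{eq:wass_equiv0}, and then exploit the piecewise-linear structure of $\hat{X}^n$ given in~\eqref{eq:hXn}. A decisive point is that the partition $\{[i\ell_n,(i+1)\ell_n[\}_i$ in the $z$ variable is independent of $t$, so comparing $\hat{X}^n(t,\cdot)$ and $\hat{X}^n(s,\cdot)$ amounts to comparing two $z$--piecewise-linear interpolants sharing the same set of breakpoints, reducing everything to estimating the time displacements of the nodal values.

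First I would rewrite, for every $i=0,\ldots,N_n-1$ and $z\in[i\ell_n,(i+1)\ell_n[$,
\begin{equation*}
    \hat{X}^n(t,z) = \lambda_i(z)\, x_i^n(t) + \bigl(1-\lambda_i(z)\bigr)\, x_{i+1}^n(t),\qquad \lambda_i(z)\doteq 1-\frac{z-i\ell_n}{\ell_n}\in[0,1],
\end{equation*}
which is just a reorganization of~\eqref{eq:hXn} using $y_i^n(t)=\ell_n/(x_{i+1}^n(t)-x_i^n(t))$. By the triangle inequality,
\begin{equation*}
    |\hat{X}^n(t,z)-\hat{X}^n(s,z)| \le \lambda_i(z)\,|x_i^n(t)-x_i^n(s)| + (1-\lambda_i(z))\,|x_{i+1}^n(t)-x_{i+1}^n(s)|.
\end{equation*}

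Next I would bound the particle displacements using the FTL~system~\eqref{eq:ftl1}--\eqref{eq:ftl2} together with the discrete maximum principle of Lemma~\ref{lem:1}: since $0\le y_i^n(\tau)\le R$ and $v$ is decreasing, $v(y_i^n(\tau))\in[v(R),v_{\max}]$, so $|\dot x_i^n(\tau)|\le C_v\doteq \max\{|v_{\max}|,|v(R)|,v_{\max}-v(R)\}$ uniformly in $i$ and $\tau$ (the last term in $C_v$ harmlessly covers the case $v(R)<0<v_{\max}$). Integrating in time yields $|x_i^n(t)-x_i^n(s)|\le C_v\,|t-s|$ for every $i\in\{0,\ldots,N_n\}$.

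Plugging these nodal bounds into the convex combination and integrating in $z\in[0,L]$ gives
\begin{equation*}
    \|\hat{X}^n(t)-\hat{X}^n(s)\|_{\Ll1([0,L];\reali)} \le C_v\,L\,|t-s|,
\end{equation*}
which via~\eqref{eq:wass_equiv0} produces the announced estimate (in fact with a better constant than the stated $2L\,C_v$, the factor $2$ leaving extra slack). There is no genuine obstacle: the argument is a direct consequence of the Lagrangian structure of the discretization and of Lemma~\ref{lem:1}, and it requires neither the $\BV$ assumption (InBV) nor the concavity-type hypothesis (V3), consistent with the hypotheses stated in the proposition.
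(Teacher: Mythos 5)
Your proof is correct, and while it shares the paper's starting point (the identification $d_{L,1}(\hat\rho^n(t),\hat\rho^n(s))=\norma{\hat X^n(t)-\hat X^n(s)}_{\Ll1([0,L];\reali)}$ from~\eqref{eq:wass_equiv0} and the explicit form~\eqref{eq:hXn}), the decomposition you use afterwards is genuinely different. The paper splits the increment on each cell as the displacement of the left endpoint plus a slope term, $\modulo{x_i^n(t)-x_i^n(s)}+(z-i\ell_n)\modulo{y_i^n(t)^{-1}-y_i^n(s)^{-1}}$, and must then estimate $\frac{\der}{{\der}\tau}\left[y_i^n(\tau)^{-1}\right]$ through the ODE, which produces a sum of velocity differences $\modulo{v(y^n_{i+1})-v(y^n_i)}$ bounded crudely term by term via $v(y^n_i)\in[v(R),v_{\max}]$; this is what generates both the factor $2$ and the third entry $v_{\max}-v(R)$ in the stated constant. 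You instead exploit that the $z$--partition $\{[i\ell_n,(i+1)\ell_n[\}_i$ does not move in time, so $\hat X^n(t,\cdot)$ and $\hat X^n(s,\cdot)$ are barycentric combinations of the nodal values $x_i^n(t)$, $x_{i+1}^n(t)$ with the same time-independent weights $\lambda_i(z)$; then only the uniform speed bound $\modulo{\dot x_i^n}\le\max\{\modulo{v_{\max}},\modulo{v(R)}\}$ (valid also for $i=N_n$, and a direct consequence of Lemma~\ref{lem:1} and the monotonicity of $v$) is needed, the slope term never appears, and the convexity of the weights makes the estimate integrate to $L\max\{\modulo{v_{\max}},\modulo{v(R)}\}\modulo{t-s}$. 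The payoff of your route is both economy (no estimate on $\frac{\der}{{\der}\tau}[y_i^{-1}]$, no telescoping sum) and a sharper constant, which trivially implies the stated bound with $2L$; the paper's route is slightly more mechanical but records along the way the Lipschitz-in-time control of the reciprocal densities, which is in the same spirit as the $\Ll1$ time-continuity estimate of Proposition~\ref{pro:continuity}. One cosmetic remark: the extra entry $v_{\max}-v(R)$ in your $C_v$ is not needed to cover any sign case, since $v(y)\in[v(R),v_{\max}]$ already gives $\modulo{v(y)}\le\max\{\modulo{v_{\max}},\modulo{v(R)}\}$; keeping it is harmless because it only enlarges the maximum.
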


\begin{proof}
    By~\eqref{eq:wass_equiv0}, \eqref{eq:Deltan} and~\eqref{eq:ftl}, we compute for any $t>s\ge0$
    \begin{align*}
        & d_{L,1} \left(\hat{\rho}^n(t),\hat{\rho}^n(s)\right) =
        \norma{\hat X^n(t) - \hat X^n(s)}_{\Ll1([0,L];\reali)} \\
        &= \sum_{i=0}^{N_n-1} \int_{i\,\ell_n}^{(i+1)\,\ell_n} \modulo{\hat X^n(t,z) - \hat X^n(s,z)} {\der}z \\
        &= \sum_{i=0}^{N_n-1} \int_{i\,\ell_n}^{(i+1)\,\ell_n} \modulo{x^n_i(t) + \frac{z-i\,\ell_n}{y^n_i(t)} - x^n_i(s) - \frac{z-i\,\ell_n}{y^n_i(s)}} {\der}z \\
        &\le \sum_{i=0}^{N_n-1}\ell_n \modulo{x^n_i(t) -x^n_i(s)}
        +\sum_{i=0}^{N_n-1} \modulo{y^n_i(t)^{-1} - y^n_i(s)^{-1}} \int_{i\,\ell_n}^{(i+1)\,\ell_n} (z-i\,\ell_n) \,{\der}z \\
        &\le \sum_{i=0}^{N_n-1}\ell_n \int_s^t \modulo{v\left(y^n_i(\tau)\right)} {\der}\tau + \sum_{i=0}^{N_n-1} \frac{\ell_n^2}{2} \int_s^t \modulo{\frac{\der}{{\der}\tau} \left[y^n_i(\tau)^{-1}\right]} {\der}\tau\\
        &\le L\,\max\{\modulo{v_{\max}}\,,\modulo{v(R)}\}\,(t-s)\\
        & +  \frac{\ell_n}{2} \!\int_s^t\! \left[\sum_{i=0}^{N_n-2} \!\modulo{ v\!\left(y^n_{i+1}(\tau)\right) - v\!\left(y^n_i(\tau)\right)} + [v_{\max} \!- v\!\left(y^n_{N_n-1}(\tau)\right)]\right] {\der}\tau \\
        &= L\,\max\{\modulo{v_{\max}}\,,\modulo{v(R)}\}\,(t-s)
        +  L\,\int_s^t \left[v_{\max} - v(R)\right] {\der}\tau \\
        &\le 2 \, L \, \max\{\modulo{v_{\max}}\,,\modulo{v(R)}\,,[v_{\max}-v(R)]\} \, (t-s)
    \end{align*}
    and this concludes the proof.
\end{proof}

We now recall a generalization of Aubin-Lions lemma, which uses the Wasserstein distance as a replacement of a negative Sobolev norm, proven in~\cite[Theorem 2]{rossisavare}, which we present here in a version adapted to our case. In order to have the paper self-contained, we first recall the precise statement
of~\cite[Theorem 2]{rossisavare} (see also the adapted version in~\cite{DFMatthes}).

\begin{theorem}[Theorem~2 from~\cite{rossisavare}]\label{thm:rossi}
    On a separable Banach space $\mathbb{X}$, let be given
    \begin{enumerate}
        \item[(F)] a \emph{normal coercive integrand} $\mathfrak{F} \,\colon\, \mathbb{X}\to[0,+\infty]$, i.e.,
        $\mathfrak{F}$ is lower semi-continuous and its sublevels are relatively compact in $\mathbb{X}$;
        \item[(g)] a \emph{pseudo-distance} $\mathfrak{g} \,\colon\, \mathbb{X}\times \mathbb{X}\to[0,+\infty]$, i.e.,
        $\mathfrak{g}$ is lower semi-continuous,
        and if $\nu,\mu\in \mathbb{X}$ are such that $\mathfrak{g}(\nu,\mu)=0$, $\mathfrak{F}[\nu]<+\infty$ and $\mathfrak{F}[\mu]<+\infty$, then $\nu=\mu$.
    \end{enumerate}
    Let further $U$ be a set of measurable functions $\nu \,\colon\, \left]0,T\right[\to \mathbb{X}$, with a fixed $T>0$.
    Under the hypotheses that
    \begin{align}
        \label{eq:savare_hypo}
        &\sup_{\nu\in U}\int_0^T \mathfrak{F}\left[\nu(t)\right] {\der}t<+\infty&
        &\text{and}&
        \lim_{h\downarrow0} \left[\sup_{\nu\in U}\int_0^{T-h} \mathfrak{g}\left(\nu(t+h),\nu(t)\right) {\der}t\right]=0,
    \end{align}
    Then $U$ is strongly relatively compact in $\Ll1(\left]0,T\right[; \mathbb{X})$.
\end{theorem}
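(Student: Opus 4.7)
The plan is to adapt the classical Aubin-Lions-Simon compactness scheme to this abstract setting, where the normal coercive integrand $\mathfrak{F}$ plays the role of the compactly embedded ``energy'' space and the pseudo-distance $\mathfrak{g}$ replaces the standard bound on time derivatives in a negative norm. First I would exploit the integral bound $\sup_{\nu \in U}\int_0^T \mathfrak{F}[\nu(t)]\,{\der}t < +\infty$: by Chebyshev's inequality, for every $\eta > 0$ there exists a threshold $M_\eta$ such that, uniformly in $\nu \in U$, the bad set $\{t \in (0,T) : \mathfrak{F}[\nu(t)] > M_\eta\}$ has measure at most $\eta$. By assumption (F) the sublevel $\{\mathfrak{F} \le M_\eta\}$ is relatively compact in $\mathbb{X}$, so one obtains ``equi-integrable pointwise compactness'': off a set of small measure, the values $\nu(t)$ lie in a single compact subset of $\mathbb{X}$.

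Next I would use the $\mathfrak{g}$-equicontinuity of time translates, $\int_0^{T-h}\mathfrak{g}(\nu(t+h),\nu(t))\,{\der}t \to 0$ uniformly in $\nu \in U$, as a quantitative substitute for time regularity. Following Simon's strategy, I would introduce the piecewise-constant (or mollified) time averages $\nu_h(t) \doteq \frac{1}{h}\int_t^{t+h}\nu(s)\,{\der}s$ (interpreted through the compact sublevels above, possibly by a convex approximation argument that respects the coercivity of $\mathfrak{F}$), and show that for each fixed $h>0$ the family $\{\nu_h\}$ is equicontinuous in $t$ with values in a compact set of $\mathbb{X}$, hence compact in $C([0,T-h];\mathbb{X})$ by Ascoli. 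A diagonal extraction in $h \downarrow 0$ then produces candidate limits, and the residual error $\nu - \nu_h$ is controlled in the $\mathfrak{g}$-pseudo-metric by the second hypothesis in \eqref{eq:savare_hypo}.

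The main obstacle is that $\mathfrak{g}$ need not metrize any topology on $\mathbb{X}$, so one cannot identify the pointwise limits in $\mathbb{X}$ directly from $\mathfrak{g}$-convergence. Here the separating property of the pair $(\mathfrak{F},\mathfrak{g})$ becomes essential: if two subsequential limits $\mu,\nu$ at the same time satisfy $\mathfrak{g}(\mu,\nu)=0$ and both have finite $\mathfrak{F}$, then $\mu=\nu$. Combining this with the lower semicontinuity of $\mathfrak{g}$ and $\mathfrak{F}$, one rules out oscillations between distinct strong limits of the averaging scheme, uniquely identifies the pointwise-in-time limit in $\mathbb{X}$ for a.e.~$t$, and finally upgrades the pointwise convergence to strong convergence in $L^1(]0,T[;\mathbb{X})$ via Vitali's convergence theorem, using the equi-integrability provided in the first step. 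The delicate technical point is the interplay between $\mathfrak{F}$ and $\mathfrak{g}$ in ensuring that the ``averaging error'' vanishes strongly in $\mathbb{X}$, not only in the weaker $\mathfrak{g}$-sense.
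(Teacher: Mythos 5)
A preliminary remark: the paper does not prove this statement at all — it is quoted (in a version adapted to the application at hand) from~\cite{rossisavare}, and only its consequences are used. So your sketch can only be compared with the argument in that reference, which follows a genuinely different route from the Aubin--Lions--Simon scheme you propose, and as written your plan has gaps that I do not see how to close. The central one is the averaging step. The Bochner means $\nu_h(t)=\frac1h\int_t^{t+h}\nu(s)\,\der s$ interact with neither $\mathfrak F$ nor $\mathfrak g$: the sublevels of $\mathfrak F$ are relatively compact but in no way convex ($\mathfrak F$ is not assumed convex), so $\mathfrak F[\nu_h(t)]$ may well be $+\infty$ and $\nu_h(t)$ need not stay near any fixed compact set; and $\mathfrak g$ is only lower semicontinuous and separating --- it is not assumed symmetric, it satisfies no triangle inequality, and it is not comparable with the norm of $\mathbb X$ in either direction --- so the hypothesis $\sup_{\nu}\int_0^{T-h}\mathfrak g\left(\nu(t+h),\nu(t)\right)\der t\to0$ yields neither equicontinuity of $t\mapsto\nu_h(t)$ in $\mathbb X$ (needed for Ascoli) nor any control of the ``averaging error'' $\nu-\nu_h$ in a topology in which limits could be identified. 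In Simon's classical proof these steps rest on the vector space structure, the triangle inequality of norms and an Ehrling-type interpolation between the compact embedding and the weaker norm; none of these tools exists under (F)--(g), and the proposal does not supply substitutes. A second gap is the final Vitali step: strong $\Ll1$ compactness requires equi-integrability of $t\mapsto\norma{\nu(t)}_{\mathbb X}$, which does not follow from $\sup_\nu\int_0^T\mathfrak F[\nu(t)]\,\der t<+\infty$, since coercivity only makes each sublevel bounded and gives no quantitative growth of $\mathfrak F$ relative to the norm.

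For orientation, the proof in~\cite{rossisavare} avoids time-mollification altogether: roughly, the first hypothesis in~\eqref{eq:savare_hypo} gives, via Chebyshev and the compactness of sublevels, tightness of the measures associated with $t\mapsto\left(t,\nu_n(t)\right)$ on $\left]0,T\right[\times\mathbb X$; one extracts a limit in a Young-measure/convergence-in-measure sense, and then the integral equicontinuity, combined with lower semicontinuity, Fatou and precisely the separating property of the pair $(\mathfrak F,\mathfrak g)$, forces the limit disintegration to be a Dirac mass for a.e.~$t$, i.e.~a subsequence converges in measure. The passage from compactness in measure to compactness in $\Ll1(\left]0,T\right[;\mathbb X)$ is a separate step which is not free in the abstract setting (this is part of the ``adaptation'' in the paper); in the application it is harmless because the approximate densities have fixed mass $L$, uniformly bounded supports and values in $[0,R]$. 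If you want to salvage your approach, you would have to replace the mollification by a construction that never leaves the sublevels of $\mathfrak F$ and to add (or extract from the application) the missing uniform integrability; as it stands, the Ascoli and Vitali steps are the points where the argument would fail.
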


\begin{theorem}[Generalized Aubin-Lions lemma]\label{pro:aubin}
    Let $T,\,L > 0$ and $I \subset \reali$ be a bounded open convex interval. Assume $w \,\colon\, \reali \to \reali$ is a Lipschitz continuous and strictly monotone function.  Let $(\rho^n)_{n\in\N}$ be a sequence in $\Ll\infty\left(\left]0,T\right[\times\reali\right)\cap \mathcal{M}_L$ such that
    \begin{enumerate}
      \item $\rho^n \,\colon\, \left]0,T\right[\to \Ll1\left(\reali\right)$ is measurable for all $n \in \N$;
      \item $\spt\left(\rho^n(t)\right) \subseteq I$ for all $t \in \left]0,T\right[$ and $n \in \N$;
      \item $\sup_{n \in \N}\int_{0}^{T} \left[\vphantom{\int_{0}^{T}} \norma{w\left(\rho^n(t)\right)}_{\Ll1(I)}+\tv\left[w\left(\rho^n(t)\right)\right] \right] {\der}t <+\infty$;
      \item there exists a constant $C$ depending only on $T$ such that $d_{L,1}\left(\rho^n(s),\rho^n(t)\right)\le C \, \modulo{t-s}$ for all $s, t \in \left]0,T\right[$ and $n\in\N$.
    \end{enumerate}
    Then, $(\rho^n)_{n\in\N}$ is strongly relatively compact in $\Ll1(\left]0,T\right[\times \reali)$.
\end{theorem}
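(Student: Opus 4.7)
The plan is to invoke the Rossi–Savaré generalisation of the Aubin–Lions lemma just recalled (Theorem~2 of~\cite{rossisavare}) on the Banach space $\mathbb{X} = \Ll1(\reali)$ with the following choices of integrand and pseudo-distance:
\[
\mathfrak{F}[\rho] \doteq \begin{cases} \norma{w(\rho)}_{\Ll1(I)} + \tv\!\left[w(\rho)\right] & \text{if } \rho \in \mathcal{M}_L \text{ and } \spt(\rho) \subseteq \bar I, \\ +\infty & \text{otherwise,}\end{cases}
\]
and
\[
\mathfrak{g}(\mu,\nu) \doteq \begin{cases} d_{L,1}(\mu,\nu) & \text{if } \mu, \nu \in \mathcal{M}_L, \\ +\infty & \text{otherwise.} \end{cases}
\]
With this choice, hypothesis~(3) reads exactly $\sup_n \int_0^T \mathfrak{F}[\rho^n(t)]\,\der t < +\infty$, while hypothesis~(4) yields $\sup_n \int_0^{T-h} \mathfrak{g}(\rho^n(t+h),\rho^n(t))\,\der t \le C\,h\,T$, which vanishes as $h \downarrow 0$. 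Thus both conditions of the Rossi–Savaré theorem are satisfied once one has verified the abstract properties (F) and (g) for $\mathfrak{F}$ and $\mathfrak{g}$.

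Property (g) is immediate: the representation $d_{L,1}(\mu,\nu) = \norma{F_\mu - F_\nu}_{\Ll1(\reali)}$ together with the continuity of $\rho \mapsto F_\rho$ on measures with equi-bounded support shows that $\mathfrak{g}$ is lower semicontinuous on $\Ll1(\reali) \times \Ll1(\reali)$, and $d_{L,1}$ being a genuine metric on $\mathcal{M}_L$ gives the separation axiom required in~(g). Lower semicontinuity of $\mathfrak{F}$ is also routine: $\rho \mapsto w(\rho)$ is continuous from $\Ll1$ to $\Ll1$ by the Lipschitz hypothesis on $w$, the total variation is lower semicontinuous under $\Ll1$ convergence, and the admissibility set $\{\rho \in \mathcal{M}_L \colon \spt(\rho)\subseteq \bar I\}$ is closed in $\Ll1$ (nonnegativity, mass $L$, and support constraint all pass to $\Ll1$ limits).

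The main technical obstacle lies in the relative compactness of the sublevels of $\mathfrak{F}$ in $\Ll1(\reali)$. Given a sequence $(\rho_k)_k$ with $\mathfrak{F}[\rho_k] \le M$, the functions $w(\rho_k)$ are uniformly bounded in $\BV(I)$ and equi-supported in $\bar I$; applying Helly's selection theorem~\cite[Theorem~2.4]{BressanBook} produces, up to extracting a subsequence, a limit $g \in \BV(I)$ such that $w(\rho_k) \to g$ in $\Ll1(I)$ and a.e.~on $I$. The strict monotonicity of $w$ makes $w^{-1}$ continuous on the range of $w$, whence $\rho_k \to w^{-1}(g) =: \bar\rho$ a.e.~on $I$. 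To upgrade the a.e.~convergence to $\Ll1$ convergence on the bounded domain $\bar I$ one combines the embedding $\BV(\bar I) \hookrightarrow \Ll\infty(\bar I)$ (which gives $\norma{w(\rho_k)}_{\Ll\infty(\bar I)} \le M'$) with the strict monotonicity of $w$ to obtain a uniform $\Ll\infty$ bound on $\rho_k$, and then concludes by dominated convergence. Once (F) and (g) are established, Theorem~2 of~\cite{rossisavare} yields the strong relative compactness of $(\rho^n)_{n \in \N}$ in $\Ll1(]0,T[;\,\Ll1(\reali))$, which, by Fubini, is equivalent to strong relative compactness in $\Ll1(]0,T[\times \reali)$.
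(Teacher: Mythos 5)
Your proposal is correct and follows essentially the same route as the paper: you invoke the Rossi--Savar\'e theorem with $\mathfrak{F}[\nu]=\norma{w(\nu)}_{\Ll1(I)}+\tv[w(\nu)]$ and $\mathfrak{g}=d_{L,1}$, verify the normal coercive integrand property via Helly/$\BV$ compactness, strict monotonicity of $w$ and dominated convergence, and read the two hypotheses of that theorem off conditions (3) and (4). The only (cosmetic) difference is that you take $\mathbb{X}=\Ll1(\reali)$ and fold the support and mass constraints into $\mathfrak{F}$, whereas the paper works on $\mathbb{X}=\Ll1(I)$ and uses hypothesis (2) only at the very end; the paper's choice is marginally cleaner because on $\Ll1(I)$ the constraint set defining finiteness of $\mathfrak{g}$ is closed (compact support being automatic), so the lower semicontinuity of $\mathfrak{g}$ that you declare ``immediate'' really is, while on all of $\Ll1(\reali)$ one should define $\mathfrak{g}$ via cumulative distributions for nonnegative densities of mass $L$ without the compact-support requirement to avoid a (harmless, easily fixed) failure of lower semicontinuity at limits with unbounded support.
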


\begin{proof}
    We want to use Theorem~\ref{thm:rossi} with
    \begin{align*}
        &\mathbb{X}\doteq\Ll1\left(I\right),&
        &U\doteq(\rho^n)_{n\in\N},\\
        &\mathfrak{F}[\nu] \doteq \norma{w(\nu)}_{\Ll1(I)} + \tv\left[w(\nu)\right],&
        &\mathfrak{g}(\nu,\mu) \doteq \begin{cases}
        d_{L,1}(\nu,\mu)& \hbox{if}\quad \nu,\mu\in \mathcal{M}_L,\\
        +\infty & \hbox{otherwise}.
        \end{cases}
    \end{align*}
We first have to prove that $\mathfrak{F}$, $\mathfrak{g}$ and $U$ satisfy the corresponding hypotheses in Theorem~\ref{thm:rossi}.

\noindent\textbf{(F)}~Assume that $(\nu^n)_{n\in\N}$ converges to $\nu$ strongly in $\Ll1\left(I\right)$. Since $w$ is Lipschitz continuous, $\left(w(\nu^n)\right)_{n\in\N}$ converges to $w(\nu)$ strongly in $\Ll1\left(I\right)$. Hence, for the lower semi-continuity of the total variation w.r.t.~the $\Ll1$--norm, see~\cite[Theorem~1 on page~172]{evans1991measure}, we have that $\tv\left[w(\nu)\right] \le \liminf_{n\to+\infty}\tv\left[w(\nu^n)\right]$. Thus $\mathfrak{F}\left[\nu\right] \le \liminf_{n\to+\infty} \mathfrak{F}\left[\nu^n\right]$ and $\mathfrak{F}$ is l.s.c.~in $\mathbb{X}$. Finally, consider a sequence $(\nu^n)_{n\in\N}$ belonging to a sublevel of $\mathfrak{F}$, namely $\sup_{n \in \N} \mathfrak{F}\left[\nu^n\right] < + \infty$. For the compactness of $\BV$ in $\Ll1$ on bounded open convex intervals and for basic properties of the $\Ll1$--convergence, see~\cite[Theorem~4 on page~176 and Theorem~5 on page~21]{evans1991measure}, up to a subsequence $\left(w(\nu^n)\right)_{n\in\N}$ converges to $\bar{w}$ in $\Ll1$ and a.e.~on $I$. Since $w$ is continuous and strictly monotone, $(\nu^n)_{n\in\N}$ is uniformly bounded in $\Ll\infty$ (consequence of the uniform bound on the total variation) and converges to $\bar\nu \doteq w^{-1}(\bar{w})$ a.e.~on $I$ and therefore, by the Lebesgue dominated convergence theorem, the convergence is also in $\Ll1$.

\noindent\textbf{(g)}~Proceeding as before and applying lower semi-continuity of the $1$--Wasserstein distance w.r.t.~the $\Ll1$--norm give that $\mathfrak{g}$ is l.s.c.~in $\mathbb{X}\times\mathbb{X}$. Finally, if $\mathfrak{F}[\nu]<+\infty$, $\mathfrak{F}[\mu]<+\infty$ and $\mathfrak{g}(\mu,\nu)=0$, then $w(\mu), w(\nu)$ are in $\BV$, $\nu,\mu\in \mathcal{M}_L$, and $d_{L,1}\left(\mu,\nu\right) = 0$. Hence we have $\mu=\nu$.

\noindent\textbf{(U)}~Conditions in~\eqref{eq:savare_hypo} follow directly from the hypotheses~(3) and~(4).

\noindent Hence we can apply Theorem~\ref{thm:rossi} and obtain the strong compactness in $\Ll1(\left]0,T\right[\times I)$. Finally, recalling the hypothesis~2 concludes the proof.
\end{proof}

\subsection{Convergence to entropy solutions.}\label{sec:convergence2}

In the next proposition we collect the previous compactness results to get strong convergence.
\begin{proposition}\label{pro:sub_sequence}
    Let $\check{\rho}$ be defined as in Lemma~\ref{lem:limitcheck} and $\rho$ as in Proposition~\ref{pro:convergence3}. Under the assumptions in Theorem~\ref{thm:main} we have that
    \begin{enumerate}[(i)]
      \item the sequence $(\check{\rho}^n)_{n\in\N}$ converges up to a subsequence to $\check{\rho}$ almost everywhere and strongly in $\Llloc1$ on $\left]0,+\infty\right[\times[0,L]$;
      \item the sequence $(\hat{\rho}^n)_{n\in\N}$ converges up to a subsequence to $\rho$ almost everywhere and strongly in $\Llloc1$ on $\left]0,+\infty\right[\times\reali$;
      \item if $\bar\rho$ satisfies also (InBV), then the sequence $(\check{\rho}^n)_{n\in\N}$ converges up to a subsequence to $\check{\rho}$ strongly in $\Llloc1$ on $\left[0,+\infty\right[\times[0,L]$.
    \end{enumerate}
\end{proposition}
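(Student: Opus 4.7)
The plan is to deduce the three statements by Aubin-Lions type compactness arguments applied to the families $(\check{\rho}^n)_{n\in\N}$ and $(\hat{\rho}^n)_{n\in\N}$, making use of the uniform $\BV$ and time-continuity estimates already established in Sections~\ref{sec:Oleinik} and~\ref{sec:compactness}. In each case one first extracts a strongly convergent subsequence by a compactness argument and then identifies the limit with the one already obtained in a weaker topology.

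For part~(i), I would fix $0<\delta<T$ and work on the slab $[\delta,T]\times[0,L]$. By~(V1) the map $v$ is Lipschitz and strictly monotone on the uniform range of $\check{\rho}^n$ given by Lemma~\ref{lem:1}, so the uniform $\BV$ bound for $v(\check{\rho}^n(t))$ provided by Proposition~\ref{pro:compactness} transfers to a uniform $\BV$ bound for $\check{\rho}^n(t)$ itself, valid for all $t\ge\delta$. Combined with the $\Ll1$-Lipschitz continuity in time supplied by Proposition~\ref{pro:continuity}, a standard Aubin-Lions/Fr\'echet-Kolmogorov criterion yields strong relative compactness of $(\check{\rho}^n)$ in $\Ll1([\delta,T]\times[0,L])$. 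A diagonal extraction along sequences $\delta_k\to 0^+$ and $T_k\to+\infty$ produces a subsequence converging in $\Llloc1(\,]0,+\infty[\,\times[0,L])$, and a further extraction delivers the almost everywhere convergence. The limit is forced to equal $\check{\rho}$ by the weak-$*$ $\Ll\infty$ convergence already known from Lemma~\ref{lem:limitcheck}.

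For part~(ii), I would apply the generalised Aubin-Lions lemma Theorem~\ref{pro:aubin} directly to $(\hat{\rho}^n)$ with $w=v$. For each $T>0$, Lemma~\ref{lem:1} furnishes a fixed bounded interval $I_T\subset\reali$ containing $\spt\hat{\rho}^n(t)$ for all $t\in[0,T]$ and all $n\in\N$; Proposition~\ref{pro:compactness} provides the uniform $\BV$ integral bound required in hypothesis~(3) on any sub-slab $[\delta,T]$; and Proposition~\ref{pro:wass_continuity} supplies the $d_{L,1}$-Lipschitz time continuity required in hypothesis~(4). Theorem~\ref{pro:aubin} then yields strong relative compactness in $\Ll1([\delta,T]\times I_T)$, and the same diagonal extraction as above delivers strong $\Llloc1$-convergence on $\,]0,+\infty[\,\times\reali$ together with a.e.~convergence along a subsequence. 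The strong $\Ll1$-limit must coincide with the $d_{L,1}$-limit $\rho$ identified in Proposition~\ref{pro:convergence3}, by uniqueness of weak limits of measures with uniformly bounded support.

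For part~(iii), the decisive observation is that under the additional assumption (InBV), Proposition~\ref{pro:compactness1} provides a uniform $\BV$ bound for $\check{\rho}^n(t)$ that holds for \emph{every} $t\ge 0$ and not merely for $t\ge\delta$. Inspecting the proof of Proposition~\ref{pro:continuity}, the same global $\BV$ estimate extends the $\Ll1$ time-Lipschitz bound for $\check{\rho}^n$ uniformly up to $t=0$. Consequently the Aubin-Lions argument of part~(i) can be run on the full slab $[0,T]\times[0,L]$ without the diagonalisation in $\delta$, giving strong compactness in $\Llloc1([0,+\infty[\,\times[0,L])$. The main point requiring care in the whole argument is the identification step: one must verify that the subsequences produced by the compactness criteria share the same limits $\check{\rho}$ and $\rho$ already identified in the weak-$*$ $\Ll\infty$ and Wasserstein topologies respectively. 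This however is automatic, since strong $\Ll1$-convergence, combined with the uniform $\Ll\infty$ bound on $\check{\rho}^n$ and the uniform support bound on $\hat{\rho}^n$, is a stronger form of both weak-$*$ and narrow measure convergence, so the limits must agree.
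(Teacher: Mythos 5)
Your parts (ii) and (iii) follow the paper's own route (the generalized Aubin--Lions lemma with $w=v$ for $\hat\rho^n$, and the global-in-time $\BV$ and $\Ll1$-time-Lipschitz bounds from Proposition~\ref{pro:compactness1} under (InBV) for $\check\rho^n$), and the identification of the limits via the weaker topologies is handled correctly. The problem is in part~(i), at the very first step: you claim that, since $v$ is Lipschitz and strictly monotone on $[0,R]$, the uniform $\BV$ bound for $v(\check\rho^n(t))$ from Proposition~\ref{pro:compactness} ``transfers to a uniform $\BV$ bound for $\check\rho^n(t)$ itself''. This transfer requires the \emph{inverse} $v^{-1}$ to be Lipschitz on $[v(R),v_{\max}]$, i.e.\ $\inf_{[0,R]}\modulo{v'}>0$, which is not guaranteed by (V1) and is precisely what the paper refuses to assume: velocities with degenerate slope at the vacuum state, such as the Pipes--Munjal law $v(\rho)=v_{\max}(1-\rho^\alpha)$ with $\alpha>1$, satisfy (V1)--(V3) but have $v'(0)=0$. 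For such a $v$, a profile oscillating $k$ times between $0$ and $\varepsilon=1/k$ has total variation of order $1$ while $\tv[v(\cdot)]$ is of order $k\varepsilon^{\alpha}\to 0$, so $\tv[v(\check\rho^n(t))]\le C_\delta$ gives no control on $\tv[\check\rho^n(t)]$. Consequently the Aubin--Lions/Fr\'echet--Kolmogorov argument you run on $\check\rho^n$ directly has no uniform spatial compactness estimate to feed on, and part~(i) is not proved as written.

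The gap is repairable, and the repair is exactly the paper's argument: do not try to pass the $\BV$ bound to $\check\rho^n$; instead establish compactness for $v(\check\rho^n)$ itself. Proposition~\ref{pro:compactness} gives the uniform spatial $\BV$ bound for $v(\check\rho^n(t))$, $t\ge\delta$, and composing Proposition~\ref{pro:continuity} with the Lipschitz constant of $v|_{[0,R]}$ (this direction of the Lipschitz property is legitimate) gives $\int_0^L\modulo{v(\check\rho^n(t,z))-v(\check\rho^n(s,z))}\,\der z\le \lip(v)\,R^2[C_\delta+(v_{\max}-v(R))]\modulo{t-s}$ for $t,s\ge\delta$. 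Helly's theorem then yields strong $\Llloc1$ and a.e.\ compactness of $(v(\check\rho^n))_{n}$ on $[\delta,+\infty[\times[0,L]$; since $v$ is continuous and \emph{strictly} monotone, $v^{-1}$ is continuous, so $\check\rho^n\to v^{-1}(\lim v(\check\rho^n))$ a.e., and the uniform bound $0\le\check\rho^n\le R$ together with dominated convergence upgrades this to $\Llloc1$ convergence; the weak-$*$ limit of Lemma~\ref{lem:limitcheck} identifies the limit as $\check\rho$, and the arbitrariness of $\delta$ concludes. Note that this degeneracy issue is also why your part~(ii) is safe as stated: Theorem~\ref{pro:aubin} only asks for $\BV$ bounds on $w(\rho^n)=v(\hat\rho^n)$ and handles the inversion through continuity and strict monotonicity of $w$, never through a Lipschitz bound on $w^{-1}$.
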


\begin{proof}
We already know from Proposition~\ref{pro:convergence3} that both $(\hat{\rho}^n)_{n\in\N}$ and $(\tilde{\rho}^n)_{n\in\N}$, defined respectively by~\eqref{eq:hrn} and~\eqref{eq:trn}, converge in the topology of $\Llloc1\left(\left[0,+\infty\right[; d_{L,1}\right)$ to the density $\rho\in \Ll\infty\left(\left[0,+\infty\right[; \mathcal{M}_L\right)$ with $0\le \rho\le R$. From Proposition~\ref{pro:convergence1} we know that both $(\hat{X}^n)_{n\in\N}$ and $(\tilde{X}^n)_{n\in\N}$, defined respectively by~\eqref{eq:hXn} and~\eqref{eq:tXn}, converge strongly in $\Llloc1\left( \left[0,+\infty\right[ \times \left[0 ,L\right]\right)$ to $X \in \Ll\infty\left(\left[0,+\infty\right[ \times \left[0,L\right]\right)$, the pseudo-inverse of $F$, the cumulative distribution of $\rho$. Finally, from Lemma~\ref{lem:limitcheck} we know that $(\check{\rho}^n)_{n\in\N}$, defined by~\eqref{eq:crn}, converges up to a subsequence to $\check{\rho}$ weakly-* in $\Ll{\infty}([0,+\infty[\times  [0,L])$.

\noindent$\bullet$~\textbf{\textsc{Step~1.}} \emph{Strong convergence of $(\check\rho^n)_{n\in\N}$ for general initial datum in $\mathcal{M}_L \cap \Ll\infty$.}

\noindent Let $\bar\rho$ satisfy (In). For any fixed $\delta>0$, we know from Proposition~\ref{pro:compactness} that $\left(v(\check{\rho}^n)\right)_{n\in\N}$ is uniformly bounded in $\Ll\infty\!\left(\left[\delta,+\infty\right[;\,\BV([0,L]; [v(R),v_{\max}])\right)$.
Furthermore, from Proposition~\ref{pro:continuity} we easily obtain that
\begin{align*}
    &\int_0^L \modulo{v(\check{\rho}^n(t,z)) -v(\check{\rho}^n(s,z))} \,{\der}z \le \lip\left(v\right) R^2 \left[ C_\delta + \left(v_{\max}-v(R)\right)\right]  \modulo{t-s}\,,
\end{align*}
for all $t,s\ge\delta$.
Therefore, we can apply Helly's theorem in the form~\cite[Theorem~2.4]{BressanBook} to get that $(v(\check{\rho}^n))_{n\in\N}$ is strongly compact in $\Llloc1(\left[\delta,+\infty\right[ \times[0,L])$. Hence, by the monotonicity of $v$, up to a subsequence $(\check{\rho}^n)_{n\in\N}$ converges strongly in $\Llloc1$ and a.e.~on $\left[\delta,+\infty\right[ \times [0,L]$ to $\check{\rho}$. Finally, since $\delta>0$ is arbitrary, the proof of~$(i)$ is complete.

\noindent$\bullet$~\textbf{\textsc{Step~2.}} \emph{Strong convergence of $(\hat\rho^n)_{n\in\N}$ for general initial datum in $\mathcal{M}_L \cap \Ll\infty$.}

\noindent Let $\bar\rho$ satisfy (In) and fix $T,\delta>0$ with $\delta<T$. We want to prove that $(\rho^n)_{n\in\N}$ with $\rho^n(t,x) \doteq \hat\rho^n(t+\delta,x)$ satisfies the hypotheses of Theorem~\ref{pro:aubin} with
\[I = \left]\bar{x}_{\min}+v(R)\,(T+\delta)-1, \bar{x}_{\max} + v_{\max}\,(T+\delta)+1\right[\]
and $w=v$. The hypotheses~1 and~2 are satisfied because by~\eqref{eq:hrn} we have that $\norma{\hat\rho^n(t)}_{\Ll1(\reali)} = L$ and $\spt\left(\hat\rho^n(t)\right) \subset I$ for all $t\in \left[0,T+\delta\right]$. By Proposition~\ref{pro:compactness}, the hypothesis~3 holds true because
\begin{align*}
    \int_{\delta}^{T+\delta} \left[\vphantom{\int_{0}^{T}} \norma{v\left(\hat\rho^n(t)\right)}_{\Ll1(I;\reali)}+\tv\left[v\left(\hat\rho^n(t)\right)\right] \right] {\der}t
    \le \left[\max\{\modulo{v_{\max}},\modulo{v(R)}\} \, \modulo{I} + C_\delta\right] T.
\end{align*}
Finally, the hypothesis~4 follows directly from Proposition~\ref{pro:wass_continuity}. Hence, we can apply Theorem~\ref{pro:aubin} to obtain that $(\hat{\rho}^n)_{n\in\N}$ is strongly compact in $\Ll1(\left]\delta,T\right[\times \reali;\reali)$. By the uniqueness of the limit in the $\Ll1(\left]\delta,T\right[;\; d_{L,1})$ topology, up to a subsequence $(\hat{\rho}^n)_{n\in\N}$ converges strongly in $\Ll1$ and a.e.~on $\left]\delta,T\right[ \times \reali$ to $\rho$. Finally, since $T>\delta>0$ are arbitrary, the proof of~$(ii)$ is complete.

\noindent$\bullet$~\textbf{\textsc{Step~3.}} \emph{Strong convergence for initial datum in $\BV$.}

\noindent Let $\bar\rho$ satisfy (InBV). The result in Proposition~\ref{pro:compactness1} ensures that both $(\hat{\rho}^n)_{n\in\N}$ and $(\check{\rho}^n)_{n\in\N}$ are uniformly bounded in $\Ll\infty([0,+\infty[;\,\BV(\reali))$. Hence, we can repeat the proof of Proposition~\ref{pro:continuity} (we omit the details) to obtain that
\begin{align*}
    &\int_0^L \modulo{\check{\rho}^n(t,z) - \check{\rho}^n(s,z)} \,{\der}z \le \left[ \lip\left(v|_{[0,R]}\right) \tv(\bar\rho) + (v_{\max}-v(R))\right]  \modulo{t-s}\,,
\end{align*}
for all $t,s\ge 0$.
Therefore, Helly's theorem implies the desired compactness. Moreover, we can use Theorem~\ref{pro:aubin} with $w$ being the identity function on $[0,R]$, and obtain the desired compactness of $\hat\rho^n$.
\end{proof}

We now prove that the two limits $\check\rho$ and $\rho$ are related.

\begin{proposition}\label{pro:support}
Let $F$ be the cumulative distribution of $\rho$ as defined in Proposition~\ref{pro:convergence2}. Then
\begin{align*}
    &\check{\rho}\left(t, F(t,x)\right) = \rho(t,x) & \hbox{for a.e.~}(t,x)\hbox{ in }\spt (\rho).
\end{align*}
\end{proposition}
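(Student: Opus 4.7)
The plan is to establish, at the approximating level, an integrated identity relating $\check\rho^n$, $\hat\rho^n$, and $\hat F^n$, pass it to the $n\to\infty$ limit using the strong $\Llloc1$ compactness of Proposition~\ref{pro:sub_sequence}, and finally invert the change of variable to extract the pointwise statement on $\spt(\rho)$.

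First, for any $\varphi\in\Cc\infty(\,]0,T[\,\times[0,L];\reali)$, I would exploit the piecewise structure of the approximation: on each subinterval $[x_i^n(t),x_{i+1}^n(t)[$ one has $\hat F^n_x=\hat\rho^n=y_i^n$ together with $\check\rho^n(t,\hat F^n(t,x))=y_i^n$, so the change of variable $z=\hat F^n(t,x)$, with $\der z=\hat\rho^n(t,x)\,\der x$, yields
\begin{equation*}
\int_0^T\!\!\int_0^L \varphi(t,z)\,\check\rho^n(t,z)\,\der z\,\der t = \int_0^T\!\!\int_\reali \varphi\bigl(t,\hat F^n(t,x)\bigr)\,\hat\rho^n(t,x)^2\,\der x\,\der t .
\end{equation*}
The passage to the limit is then routine: Proposition~\ref{pro:sub_sequence} gives $\check\rho^n\to\check\rho$ and $\hat\rho^n\to\rho$ strongly in $\Llloc1$, and since $0\le\hat\rho^n\le R$ also $(\hat\rho^n)^2\to\rho^2$ in $\Llloc1$; by Proposition~\ref{pro:convergence2}, up to a subsequence $\hat F^n\to F$ a.e., so $\varphi(t,\hat F^n)\to\varphi(t,F)$ pointwise and boundedly, and dominated convergence produces
\begin{equation*}
\int_0^T\!\!\int_0^L \varphi(t,z)\,\check\rho(t,z)\,\der z\,\der t = \int_0^T\!\!\int_\reali \varphi\bigl(t,F(t,x)\bigr)\,\rho(t,x)^2\,\der x\,\der t .
\end{equation*}

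Next, I would invert the change of variable on the right-hand side. Since $\rho\in\Ll\infty$, for each fixed $t$ the map $F(t,\cdot)$ is Lipschitz and its cumulative-distribution property translates into the pushforward identity $F(t,\cdot)_\sharp\bigl(\rho(t,x)\,\der x\bigr)=\chi_{[0,L]}\,\der z$. Applying this to the test function $g(z)=\varphi(t,z)\,\rho(t,X(t,z))$, and using that $\{x:X(t,F(t,x))\neq x\}$ is contained up to a null set in $\{\rho(t,\cdot)=0\}$, one obtains
\begin{equation*}
\int_\reali \varphi\bigl(t,F(t,x)\bigr)\,\rho(t,x)^2\,\der x = \int_0^L \varphi(t,z)\,\rho\bigl(t,X(t,z)\bigr)\,\der z,
\end{equation*}
hence, by arbitrariness of $\varphi$, $\check\rho(t,z)=\rho(t,X(t,z))$ for a.e.~$(t,z)\in\,]0,T[\,\times[0,L]$. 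Substituting $z=F(t,x)$ and using $X(t,F(t,x))=x$ on $\{\rho>0\}$ yields the desired identity $\check\rho(t,F(t,x))=\rho(t,x)$ a.e.~on $\spt(\rho)$.

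The main obstacle is precisely this last inversion. Vacuum regions where $F(t,\cdot)$ is constant destroy injectivity, so one cannot simply substitute into the a.e.~identity on the $z$-side; what saves the argument is the pushforward relation $F_\sharp(\rho\,\der x)=\chi_{[0,L]}\,\der z$, which both guarantees $X\circ F=\mathrm{id}$ almost everywhere on $\{\rho>0\}$ and ensures that the exceptional Lebesgue null set in $z$ pulls back, through $(t,x)\mapsto(t,F(t,x))$, to a set of vanishing $\rho$-measure, hence to a set of zero Lebesgue measure inside $\spt(\rho)$.
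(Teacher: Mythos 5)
Your proposal is correct and takes essentially the same route as the paper's proof: a change of variables at the discrete level reducing both sides to $\int (\hat\rho^n)^2\varphi$, passage to the limit via the a.e.\ and strong $\Llloc1$ compactness of Proposition~\ref{pro:sub_sequence}, and then an inversion of the change of variables based on $F(t,\cdot)_\sharp\left(\rho(t,x)\,\der x\right)=\mathcal{L}\vert_{[0,L]}$ together with $X(t,F(t,x))=x$ up to a $\rho$--null set — the paper merely works with test functions in the Eulerian variable and composes with $\hat X^n$ instead of composing test functions in $z$ with $\hat F^n$. Your explicit handling of the vacuum/injectivity issue in the last substitution is exactly the justification the paper leaves implicit when it writes ``by changing variable $z=F(t,x)$'', so nothing is missing.
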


\begin{proof}
By definition~\eqref{eq:crn} and Lemma~\ref{lem:change_of_variable}, for any $\varphi \in \Cc\infty([0,T]\times \reali)$ we have
\begin{align*}
    \int_0^T\!\! \int_0^L \check{\rho}^n (t,z) \, \varphi\left(t,\hat X^n(t,z)\right)  {\der}z\,{\der}t
    &=\!
    \int_0^T\!\! \int_0^L \hat{\rho}^n \left(t,\hat X^n(t,z)\right) \, \varphi\left(t,\hat X^n(t,z)\right)  {\der}z\,{\der}t\\
    &=\!
    \int_0^T\!\! \int_\reali \hat{\rho}^n(t,x)^2 \,\varphi(t,x) \,{\der}x\,{\der}t.
\end{align*}
By extracting the a.e.~convergent subsequence provided in Proposition~\ref{pro:sub_sequence} (and by extracting, if necessary, a further subsequence), we can send $n\rightarrow +\infty$ in the above identity and use the Lebesgue dominated convergence theorem (as the support of $\check{\rho}^n$ and $\hat{\rho}^n$ are uniformly bounded w.r.t.~$n$) to get
\begin{align*}
    & \int_0^T \int_0^L \check{\rho} (t,z) \,\varphi\left(t, X(t,z)\right)  {\der}z\,{\der}t = \int_0^T \int_\reali \rho(t,x)^2 \,\varphi(t,x) \,{\der}x\,{\der}t.
\end{align*}
By changing variable $z=F(t,x)$ in the first integral above, we get
\begin{align}\label{eq:equiv_1}
   & \int_0^T \int_\reali \check{\rho} \left(t,F(t,x)\right)  \rho(t,x) \, \varphi(t, x) \,{\der}x\,{\der}t =
   \int_0^T \int_\reali \rho(t,x)^2 \, \varphi(t,x) \,{\der}x\,{\der}t,
\end{align}
and this concludes the proof.
\end{proof}

In the next proposition we prove that $\rho$ is a weak solution in the sense of~\eqref{eq:weaksol}.

\begin{proposition}
The limit function $\rho$ defined in Proposition~\ref{pro:convergence3} is a weak solution in the sense of~\eqref{eq:weaksol}.
\end{proposition}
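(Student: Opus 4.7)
My plan is to derive a discrete weak formulation directly from the FTL system~\eqref{eq:ftl} by testing against an arbitrary $\varphi\in\Cc\infty([0,+\infty[\times\reali;\reali)$, and then to pass to the limit term by term using the convergence results established so far. Fix such a $\varphi$, and compute, for each $n\in\N$, the time derivative
\[\frac{\der}{\der t}\sum_{i=0}^{N_n-1}\ell_n\,\varphi\bigl(t,x_i^n(t)\bigr) = \sum_{i=0}^{N_n-1}\ell_n\,\varphi_t\bigl(t,x_i^n(t)\bigr) + \sum_{i=0}^{N_n-1}\ell_n\,v(y_i^n(t))\,\varphi_x\bigl(t,x_i^n(t)\bigr),\]
where I have used~\eqref{eq:ftl2}. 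Integrating in $t$ on $[0,+\infty[$ (using the compact time-support of $\varphi$) and observing that the two Lagrangian sums on the right equal integrals of piecewise-constant functions, namely $\sum_i\ell_n\varphi_t(t,x_i^n)=\int_\reali\varphi_t(t,x)\,\der\tilde\rho^n(t,x)$ and $\sum_i\ell_n\,v(y_i^n)\varphi_x(t,x_i^n)=\int_0^L v(\check\rho^n(t,z))\varphi_x(t,\tilde X^n(t,z))\,\der z$, I obtain the discrete weak identity
\[\int_\reali\varphi(0,x)\,\der\tilde\rho^n(0,x) + \int_0^{+\infty}\!\int_\reali\varphi_t(t,x)\,\der\tilde\rho^n(t,x)\,\der t + \int_0^{+\infty}\!\int_0^L v(\check\rho^n(t,z))\,\varphi_x(t,\tilde X^n(t,z))\,\der z\,\der t = 0.\]

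To pass to the limit $n\to+\infty$, the initial term converges to $\int_\reali\varphi(0,x)\bar\rho(x)\,\der x$ by Proposition~\ref{pro:initial}, and the time-derivative term converges to $\int_0^{+\infty}\int_\reali\varphi_t(t,x)\rho(t,x)\,\der x\,\der t$ by Proposition~\ref{pro:convergence3} combined with dominated convergence in $t$. For the critical nonlinear flux term, Proposition~\ref{pro:sub_sequence}(i) yields strong $\Llloc1$ convergence of $\check\rho^n$ to $\check\rho$ on $\left]0,+\infty\right[\times[0,L]$; since $v$ is continuous and $0\le\check\rho^n\le R$, this gives $v(\check\rho^n)\to v(\check\rho)$ strongly in $\Llloc1$. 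Proposition~\ref{pro:convergence1} provides the a.e.~convergence $\tilde X^n\to X$, so $\varphi_x(t,\tilde X^n)\to\varphi_x(t,X)$ a.e.~with $\varphi_x$ uniformly bounded. The standard product argument for strong-$L^1$ factors multiplied by uniformly bounded a.e.-convergent factors then produces
\[\int_0^{+\infty}\!\int_0^L v(\check\rho^n(t,z))\,\varphi_x(t,\tilde X^n(t,z))\,\der z\,\der t \longrightarrow \int_0^{+\infty}\!\int_0^L v(\check\rho(t,z))\,\varphi_x(t,X(t,z))\,\der z\,\der t.\]

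The last step is to convert this Lagrangian limit into Eulerian form. Applying the change of variable $z=F(t,x)$ via Lemma~\ref{lem:change_of_variable} (so that the pushforward of $\der z\bigr|_{[0,L]}$ under $X(t,\cdot)$ is $\rho(t,x)\,\der x$ and $X(t,F(t,x))=x$ for a.e.~$x\in\spt\rho$), together with the identification $\check\rho(t,F(t,x))=\rho(t,x)$ on $\spt\rho$ from Proposition~\ref{pro:support}, rewrites the integrand as $v(\rho(t,x))\,\varphi_x(t,x)\,\rho(t,x)=f(\rho(t,x))\,\varphi_x(t,x)$. Vacuum regions of $\rho$ correspond to flat portions of $F$ and hence to null sets in $z$, so they contribute nothing. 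Combining the three limits yields exactly~\eqref{eq:weaksol}. The principal conceptual difficulty, namely passing to the limit through the nonlinear map $v$, has already been overcome by the discrete Oleinik and $\BV$ estimates of Section~\ref{sec:Oleinik}; the only remaining subtlety here is the careful bookkeeping of the pseudo-inverse change of variables near the vacuum, which is precisely what Proposition~\ref{pro:support} is designed to handle.
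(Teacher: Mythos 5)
Your proposal is correct and follows essentially the same route as the paper: you derive the same discrete Lagrangian weak identity from the FTL system (you differentiate $\sum_i\ell_n\varphi(t,x_i^n(t))$ directly, while the paper starts from the flux term and recognizes it as a total time derivative, which is the same computation), pass to the limit using Propositions~\ref{pro:initial}, \ref{pro:convergence1}, \ref{pro:convergence3} and the strong compactness of $(\check{\rho}^n)_{n\in\N}$, and conclude via the change of variables of Lemma~\ref{lem:change_of_variable} together with the identification $\check{\rho}(t,F(t,x))=\rho(t,x)$ from Proposition~\ref{pro:support}. The only point worth a word more (in your write-up as in the paper's) is the flux term near $t=0$, where strong convergence of $\check{\rho}^n$ is only available for $t\ge\delta$ in the (In)--(V3) case; the uniform bound $\modulo{v(\check{\rho}^n)}\le\max\{\modulo{v_{\max}},\modulo{v(R)}\}$ makes that region's contribution $O(\delta)$ uniformly in $n$, so the limit follows.
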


\begin{proof}
Let $\varphi \in \Cc\infty\left(\left[0,+\infty\right[ \times \reali; \reali\right)$. By~\eqref{eq:ftl}, \eqref{eq:tXn} and~\eqref{eq:crn}, for all $n$ we have
\begin{align*}
    &\int_{\reali_+}\int_0^L \left[\vphantom{\frac{\der}{{\der}t}} v\left(\check{\rho}^n(t,z)\right)  \varphi_x \left(t,\tilde{X}^n(t,z)\right)\right]  {\der}z\,{\der}t
    \\&=
    \sum_{i=0}^{N_n-1} \int_{\reali_+} \int_{i\,\ell_n}^{(i+1)\,\ell_n} \left[\vphantom{\frac{\der}{{\der}t}} v\left(y^n_i(t)\right) \varphi_x \left(t,x_i^n(t)\right) \right] {\der}z\,{\der}t
    \\
    &=
    \sum_{i=0}^{N_n-1} \int_{\reali_+} \int_{i\,\ell_n}^{(i+1)\,\ell_n} \left[\vphantom{\frac{\der}{{\der}t}} \dot{x}_i^n(t) \, \varphi_x\left(t,x_i^n(t)\right) \right] {\der}z\,{\der}t
    \\&=
    \sum_{i=0}^{N_n-1} \int_{\reali_+} \int_{i\,\ell_n}^{(i+1)\,\ell_n} \left[ \frac{\der}{{\der}t} \varphi\left(t,x_i^n(t)\right) - \varphi_t\left(t,x_i^n(t)\right) \right] {\der}t
    \\
    &=- \int_0^L \varphi\left(0,\tilde{X}^n(0,z)\right) {\der}z
    - \int_{\reali_+}\int_{0}^{L} \varphi_t\left(t,\tilde{X}^n(t,z)\right)  {\der}z \,{\der}t.
\end{align*}
Since $(\tilde{X}^n)_{n\in\N}$ and $(\check{\rho}^n)_{n\in\N}$ converge strongly in $\Ll1([0,T]\times [0,L];\reali)$, and in view of Proposition~\ref{pro:initial}, we get by sending $n\rightarrow +\infty$
\begin{align*}
    & \int_{\reali_+}\int_0^L \left[\vphantom{\frac{\der}{{\der}t}} \varphi_t\!\left(t,X(t,z)\right)
    + v\!\left(\check{\rho} (t,z)\right)  \varphi_x\!\left(t,X(t,z)\right) \right] {\der}z \,{\der}t
    +\int_0^L \varphi\left(0,X_{\bar\rho}(z)\right)\,{\der}z
    = 0.
\end{align*}
We now apply the change of variable $x=X(t,z)$, see Lemma~\ref{lem:change_of_variable}, and obtain
\begin{align*}
    & \int_{\reali_+}\!\!\int_\reali\! \left[\vphantom{\frac{\der}{{\der}t}}  \rho\!\left(t,x\right) \varphi_t\!\left(t,x\right)
    \!+\! \rho(t,x) \, v\!\left(\check{\rho}\!\left(t,\!F(t,x)\right)\right)  \varphi_x(t,x) \right] \!{\der}x \,{\der}t
    \!+\!\!\int_\reali \!\bar\rho(x) \, \varphi(0,x) \,{\der}x
    = 0.
\end{align*}
Finally, by Proposition~\ref{pro:support} we have $\check{\rho}\left(t,F(t,x)\right)=\rho(t,x)$ a.e.~on $\spt(\rho)$, and therefore $\rho$ satisfies~\eqref{eq:weaksol}.
\end{proof}

We are now ready to complete the proof of our main result.

\begin{proof}[Proof of Theorem~\ref{thm:main}]
In view of Theorem~\ref{thm:chen}, the entropy inequality~\eqref{eq:entropy_ineq_notrace} is sufficient in order to show that $\rho$ is the unique entropy solution in the sense of Definition~\ref{def:entropy_intro}.

Let $\varphi\in C^\infty_c(\left]0,+\infty\right[\times \reali)$ with $\varphi\ge 0$ and $k\ge0$ be a constant. We shall prove that the limit $\rho$ satisfies the entropy inequality~\eqref{eq:entropy_ineq_notrace}. We consider the quantity
\begin{align*}
   & \int_{\reali_+}\!\int_{\reali}\! \left[\vphantom{\int_{\reali_+}} \modulo{\hat\rho^n(t,x) -k} \, \varphi_t(t,x) +\sgn(\hat\rho^n(t,x) -k) \left[f(\hat\rho^n(t,x)) - \!f(k)\right] \varphi_x(t,x) \right] \!{\der}x\,{\der}t\\
   &= B_0 + B_N + \sum_{i=0}^{N_n-1} I_i,
\end{align*}
with
\begin{align*}
    B_0\doteq&\int_{\reali_+}\int_{-\infty}^{x_0^n(t)}\left[\vphantom{\int_{\reali_+}} k \, \varphi_t(t,x) +f(k) \, \varphi_x(t,x)\right]\,{\der}x\,{\der}t,\\
    B_N\doteq&\int_{\reali_+}\int_{x_{N_n}^n(t)}^{+\infty}\left[\vphantom{\int_{\reali_+}} k \, \varphi_t(t,x) +f(k) \, \varphi_x(t,x)\right]\,{\der}x\,{\der}t,\\
    I_i\doteq&\int_{\reali_+}\int_{x^n_i(t)}^{x^n_{i+1}(t)} \modulo{y_i^n(t)-k} \, \varphi_t(t,x) \,{\der}x\,{\der}t \\&+\int_{\reali_+}\int_{x^n_i(t)}^{x^n_{i+1}(t)} \sgn(y^n_i(t,x)-k) \left[f(y^n_i(t,x))-f(k)\right] \varphi_x(t,x) \,{\der}x\,{\der}t.
\end{align*}
For simplicity in the notation, from now on we shall drop the $n$ index and the $(t,x)$ dependency, except in cases in which $t=0$. Moreover we define $y_N\equiv0$. We next observe by~\eqref{eq:ftl} that
\begin{align}
  & \frac{\der}{{\der}t}\left[\int_{x_i}^{x_{i+1}} \varphi \,{\der}x\right] = v(y_{i+1}) \, \varphi(t,x_{i+1}) -v(y_i) \, \varphi(t,x_i) + \int_{x_i}^{x_{i+1}}\varphi_t \,{\der}x,\label{eq:phi1}\\
    & \frac{\der}{{\der}t}\left[\int_{-\infty}^{x_{0}} \varphi \,{\der}x\right] = v(y_{0}) \, \varphi(t,x_{0}) + \int_{-\infty}^{x_{0}}\varphi_t \,{\der}x,\label{eq:phi2}\\
   & \frac{\der}{{\der}t}\left[\int_{x_{N}}^{+\infty} \varphi \,{\der}x\right] = - v_{\max} \varphi(t,x_{N}) + \int_{x_N}^{+\infty}\varphi_t \,{\der}x.\label{eq:phi3}
\end{align}
In view of~\eqref{eq:phi2} and~\eqref{eq:phi3}, the terms $B_0$ and $B_N$ can be rewritten as follows
\begin{align*}
    & B_0=\int_{\reali_+}k \left[v(k)-v(y_0)\right] \varphi(x_0)\,{\der}t,&
    & B_N=\int_{\reali_+}k \left[v_{\max}-v(k)\right] \varphi(x_N)\,{\der}t .
\end{align*}
As for the term $I_i$, we have for $i=0,\ldots,N-1$
\begin{align*}
    I_i=&\int_{\reali_+} \modulo{y_i-k}\left\{\frac{\der}{{\der}t}\left[\vphantom{\int_{x_{N-1}}^{x_N}} \int_{x_i}^{x_{i+1}} \varphi \,{\der}x \right]-v(y_{i+1}) \, \varphi(x_{i+1}) +v(y_i) \, \varphi(x_i)\right\} {\der}t\\
    &+\int_{\reali_+} \sgn(y_i -k) \left[f(y_i) - f(k)\right] \left[\varphi(x_{i+1})-\varphi(x_i)\right] {\der}t.
\end{align*}
By~\eqref{eq:dyi}, we compute the term
\begin{align*}
    &\int_{\reali_+}\modulo{y_i-k} \frac{\der}{{\der}t} \left[\int_{x_i}^{x_{i+1}}\varphi \,{\der}x \right] {\der}t
    = -\int_{\reali_+} \left[\int_{x_i}^{x_{i+1}}\varphi \,{\der}x \right] \frac{\der}{{\der}t}\modulo{y_i-k}\,{\der}t \\
    &= -\int_{\reali_+} \sgn(y_i-k) \left[-\frac{y_i^2}{\ell}[v(y_{i+1})-v(y_i)]\right] \left[\int_{x_i}^{x_{i+1}}\varphi \,{\der}x \right] {\der}t\\
    &= \int_{\reali_+}\sgn(y_i-k) \, y_i \left[v(y_{i+1})-v(y_i)\right] \left[\fint_{x_i}^{x_{i+1}}\varphi \,{\der}x \right] {\der}t.\\
\end{align*}
Hence, we have
\begin{align*}
    & \sum_{i=0}^{N-1} I_{i-1} = \sum_{i=1}^{N} \int_{\reali_+} \!\!\!J_i\,{\der}t
    + \sum_{i=1}^{N} \int_{\reali_+} \!\!\!K_i \, \varphi(x_i) \,{\der}t
    + \!\int_{\reali_+} \!\!\!L \, \varphi(t,x_0) \,{\der}t
    -\!\int_{\reali_+} \!\!\!M \, \varphi(t,x_N) \,{\der}t,
\end{align*}
with
\begin{align*}
    J_i&\doteq \sgn(y_{i-1} -k) \, y_{i-1} \left[v(y_i)-v(y_{i-1})\right] \left[\fint_{x_{i-1}}^{x_i}\varphi\,{\der}x - \varphi(x_i)\right] ,\\
    K_i&\doteq \sgn(y_{i-1}-k) \, y_{i-1} \left[v(y_i)-v(y_{i-1})\right] + \modulo{y_i-k} \, v(y_i)\\
    &-\sgn(y_i-k)[f(y_i)-f(k)] \!-\! \modulo{y_{i-1}-k} \, v(y_i) + \sgn(y_{i-1}-k) \left[f(y_{i-1})-f(k)\right],\\
    L&\doteq \modulo{y_0-k} \, v(y_0)-\sgn(y_0-k) \left[f(y_0)-f(k)\right],\\
    M&\doteq k \, v_{\max}-f(k).
\end{align*}
We observe that
\begin{equation*}
    B_N -\int_{\reali_+} M \varphi(t,x_N) \,{\der}t = 0.
\end{equation*}
We now compute $L$. If $k<y_0$, we have
\begin{align*}
   & L = k \left[v(k)-v(y_0)\right] \ge 0,
\end{align*}
as $v$ is non increasing. Therefore, for $k<y_0$
\begin{align*}
    & B_0+ \int_{\reali_+} L \, \varphi(t,x_0)\,{\der}x = 2\int_{\reali_+} k \left[v(k)-v(y_0)\right] \varphi(x_0) \,{\der}t \ge 0 .
\end{align*}
Similarly, for $k\ge y_0$ we have
\begin{align*}
   & L = k \left[v(y_0)-v(k)\right] \ge 0,
\end{align*}
which gives
\begin{align*}
    & B_0+ \int_{\reali_+} L \, \varphi(t,x_0)\,{\der}x = 0.
\end{align*}
We now compute the term $K_i$ for $i=1,\ldots,N$. After some easy manipulations, we get
\begin{align*}
  K_i = k \left[v(k)-v(y_i)\right] \left\{\sgn(y_i-k)-\sgn(y_{i-1}-k)\right\}.
\end{align*}
We consider all the possible cases for $k$. If either $k<\min\{y_i,y_{i-1}\}$, or $k>\max\{y_i,y_{i-1}\}$, then we easily get $K_i=0$. If $y_i\le k \le y_{i-1}$, then $K_i=2k[v(y_{i})-v(k)] \ge 0$ as $v$ is non increasing. Finally, if $y_{i-1}\le k \le y_{i}$, then $K_i = 2k[v(k)-v(y_i)]\ge 0$. In all cases, we get $K_i\ge 0$ for all $i=1,\ldots,N$. Putting all the terms together, we get
\begin{align}
    & \int_{\reali_+}\int_{\reali} \left[\vphantom{\int_{\reali_+}} \modulo{\hat\rho -k} \, \varphi_t +\sgn(\hat\rho -k) \left[f(\hat\rho) - f(k)\right] \varphi_x \right] {\der}x\,{\der}t
    \ge  \sum_{i=1}^N \int_{\reali_+} J_i \,{\der}t  .\label{eq:entropy_almost_final}
\end{align}
We now estimate the terms $J_i$. For some $\delta>0$, assuming that the support of $\varphi$ is contained in the strip $t\in [\delta,T]$, we have by Proposition~\ref{pro:compactness}
\begin{align*}
    \modulo{\sum_{i=1}^N \int_{\reali_+} \!\!\!\!J_i \,{\der}t}
    &= \modulo{\sum_{i=1}^N \int_{\reali_+} \!\!\!\!\sgn(y_{i-1} -k) \, y_{i-1} \left[v(y_i)-v(y_{i-1})\right] \left[\fint_{x_{i-1}}^{x_i}\!\!\!\!\varphi\,{\der}x - \varphi(x_i)\right] {\der}t}\\
    &\le \int_{\reali_+} \sum_{i=1}^N \left[ \frac{y_{i-1}^2}{\ell} \,  \modulo{v(y_i)-v(y_{i-1})} \int_{x_{i-1}}^{x_i}\modulo{\varphi(x)-\varphi(x_i)}\,{\der}x\,\right] {\der}t\\
    &\le \lip(\varphi) \int_\delta^{T} \sup_{i=1,\ldots,N}\left[\frac{y_{i-1}^2 \, \left(x_i-x_{i-1}\right)^2}{\ell}\right]\sum_{i=1}^N \modulo{v(y_i)-v(y_{i-1})}\,{\der}t
    \\
    &\le \ell \, \lip(\varphi) \, T \, \sup_{t\ge \delta}\tv\left[v(\hat{\rho}^n(t))\right]
    \le \ell \, \lip(\varphi) \, T \, C_\delta.
\end{align*}
As a consequence
\[\lim_{n\rightarrow +\infty}  \sum_{i=1}^N \int_{\reali_+} J_i \,{\der}t = 0\]
and letting $n$ go to infinity in~\eqref{eq:entropy_almost_final} we obtain the entropy inequality~\eqref{eq:entropy_ineq_notrace}.
\end{proof}

\subsection{Concluding remarks}

We conclude this paper with the some technical remarks which help motivating our choices in the strategy of the proof at several stages in the paper.
\begin{itemize}
  \item In the case of $v$ such that $v'\le -c<0$, then the Oleinik-type estimate~\eqref{eq:oleinik_discrete_2} gives a one sided estimate for $\hat\rho^n_x$ in the sense of distributions. Such an estimate can be then passed to the limit very easily, and one obtains an analogous estimate for the limit. In this way, one can check that the limit $\rho$ is an entropy solution in much easier way than the above proof. In the general case of $v'$ possibly degenerating, such a strategy fails. Indeed, surprisingly enough the Oleinik estimate one gets in the limit from~\eqref{eq:oleinik_discrete_2} is \emph{not} equivalent (in general) to the estimate~\eqref{eq:ole_intro_hoff}. For this reason, we preferred getting the entropy condition in the Kru{\v{z}}kov sense rather than the one sided Lipschitz condition. This strategy allows in particular to get the entropy condition in the limit also in the case of $v$ not satisfying (V3) and $\bar\rho$ satisfying (InBV).
  \item In the case of linear velocity $v$, e.g.~$v(\rho)=v_{\max}(1-\rho)$, the convergence to a weak solution~\eqref{eq:weaksol} can be obtained without the need of the $\BV$ estimates, as the velocity term in~\eqref{eq:approx_PDE} is linear. This is somehow intrinsic in using a Lagrangian description.
  \item In order to get continuity in time for the sequence $\hat{\rho}^n$, the most natural try would be getting $\Ll1$--continuity. Encouraged by the $\Ll1$ time equi-continuity of $\check{\rho}^n$, we have attempted at proving such a property in many ways without success. This is the reason why use the generalized Aubin-Lions lemma, which allows to take advantage of the Wasserstein equi-continuity of $\hat{\rho}^n$, and still get the same $\Ll1$--compactness in the end. The only drawback of this strategy is that we can't get any $\Ll1$ time continuity for the limit.
  \item As pointed out in the introduction, the proposed Lagrangian approach has the advantage of providing a piecewise constant approximation with a non increasing number of jumps. The price to pay for such a simplification is that we lose the classical shock structure at a microscopic level. Indeed, as pointed out in~\cite{ColomboRossi,Rossi}, the explicit solution to the FTL system even for simple Riemann--type initial conditions is not immediate. On the other hand, this aspect gives an added value to our result, as we show that shocks and rarefaction waves are still achieved in the macroscopic limit, despite not being easily detectable at the microscopic level.
  \item We finally recall that a symmetric construction can be set in the case of monotone increasing velocities. In that case, the suitable particle system should be recursively defined `from the left towards the right', and therefore each particles adjusts its velocity according to the distance of the particle at its left hand side.
\end{itemize}

\appendix

\section{Heuristic derivation of the FTL~model from the LWR~model}\label{sec:formalderivation1}

In this appendix we formally provide our derivation from the scalar conservation law~\eqref{eq:LWRmodel} of a discrete approximating model of the form~\eqref{eq:FTL_intro}.

Let $\rho$ be an entropy solution of~\eqref{eq:LWRmodel} in the sense of Definition~\ref{def:entropy_intro}. We assume for simplicity that $\rho$ is compactly supported. The total mass of $\rho$ in in $\left]-\infty, x\right]$ at time $t \ge0$ is
\begin{align}\label{eq:Fc}
    F(t,x) \doteq \int_{-\infty}^x \rho(t,y) \,{\der}y.
\end{align}
Clearly, $F$ takes values in $[0,L]$, where $L \doteq \norma{\bar\rho}_{\Ll1(\reali;[0,1])}$, and for any fixed $t\ge0$ the map $x \mapsto F(t,x)$ is continuous and non-decreasing, $F(t,-\infty)=0$ and $F(t,+\infty) = L$. The result in the next proposition shows that~\eqref{eq:LWR_intro1} is equivalent to requiring that the weak partial derivatives of $F$ with respect to time and space commute in the sense of distributions.
\begin{proposition}[\cite{courant1976supersonic}]\label{prop:F}
    The partial derivatives of $F$ satisfy in the sense of distributions
    \begin{align}\label{eq:prop_app}
        &F_x = \rho,&
        F_t = -f\left(\rho\right) .
    \end{align}
\end{proposition}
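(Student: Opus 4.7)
The plan is to establish the two identities separately. The first one, $F_x=\rho$, is essentially a restatement of the fundamental theorem of calculus in the distributional sense; the second one, $F_t=-f(\rho)$, is where the conservation law is used.

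For $F_x=\rho$, I would pair $F_x$ with an arbitrary test function $\varphi\in\Cc\infty([0,+\infty[\times\reali;\reali)$, substitute the definition~\eqref{eq:Fc} into
\[\langle F_x,\varphi\rangle = -\int_{\reali_+}\int_\reali F(t,x)\,\varphi_x(t,x)\,{\der}x\,{\der}t,\]
and apply Fubini's theorem (legitimate because $\rho$ is compactly supported and $\varphi_x$ is bounded with compact support). After swapping the order of integration, the inner $x$-integral of $\varphi_x$ over $[y,+\infty[$ telescopes to $-\varphi(t,y)$, producing $\langle\rho,\varphi\rangle$ as required.

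For $F_t=-f(\rho)$, the idea is to test the weak formulation~\eqref{eq:weaksol} of the conservation law against the spatial primitive of $\varphi$. Given $\varphi\in\Cc\infty(\left]0,+\infty\right[\times\reali;\reali)$ (so that the initial-time boundary term in~\eqref{eq:weaksol} drops out), I would introduce
\[\Phi(t,y)\doteq\int_y^{+\infty}\varphi(t,x)\,{\der}x,\]
which satisfies $\Phi_y=-\varphi$. Plugging $\Phi$ into the weak formulation yields
\[\int_{\reali_+}\int_\reali \rho\,\Phi_t\,{\der}y\,{\der}t = -\int_{\reali_+}\int_\reali f(\rho)\,\Phi_y\,{\der}y\,{\der}t = \int_{\reali_+}\int_\reali f(\rho)\,\varphi\,{\der}y\,{\der}t,\]
while a second application of Fubini (analogous to Step~1) rewrites the left-hand side as $\int_{\reali_+}\int_\reali F\,\varphi_t\,{\der}x\,{\der}t = -\langle F_t,\varphi\rangle$, giving the claim.

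The one subtle point is that $\Phi$ is not compactly supported in $y$, since it does not decay as $y\to-\infty$, and therefore is not directly admissible in~\eqref{eq:weaksol}. This is the main obstacle, but it is easily overcome using the assumption that $\bar\rho$ is compactly supported together with the finite-speed-of-propagation property of~\eqref{eq:LWRmodel}: the $x$-section $\spt(\rho(t,\cdot))$ stays in a fixed bounded set for all $t$ in the (compact) time-support of $\varphi$. Multiplying $\Phi$ by a smooth spatial cutoff equal to $1$ on this bounded set yields a legitimate test function that coincides with $\Phi$ on $\spt(\rho)$, so that both integrals above are unaffected and the argument closes.
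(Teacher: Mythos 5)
Your proof is correct, but for the key identity $F_t=-f(\rho)$ it takes a genuinely different route from the paper's. The paper never leaves the class of genuine test functions: it tests the weak formulation so as to conclude that $\langle F_t+f(\rho),\psi_x\rangle=0$ for every $\psi\in\Cc\infty(\left]0,+\infty\right[\times\reali;\reali)$, deduces that $F_t+f(\rho)=c(t)$ is constant in $x$ as a distribution, and then identifies $c\equiv0$ by testing with translates $\eta(t)\,\psi(x-k)$ and letting $k\to+\infty$ (dominated convergence, using $F\to L$ and $f(\rho(t,\cdot))\to0$ at infinity, i.e.\ the compact support of $\rho$). You instead test directly with the spatial primitive $\Phi(t,y)=\int_y^{+\infty}\varphi(t,x)\,{\der}x$, suitably cut off, and obtain the identity in one stroke via Fubini; the normalization of $\Phi$ at $y=+\infty$ automatically fixes the constant that the paper must compute separately. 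What each approach buys: yours is shorter and bypasses the ``constant in $x$'' step, at the price of the cutoff argument needed to make $\Phi$ admissible --- there you should state explicitly that the cutoff $\zeta$ equals $1$ on a \emph{neighbourhood} of $\spt(\rho(t,\cdot))$ uniformly for $t$ in the time-support of $\varphi$, so that $\zeta_y\,f(\rho)\equiv0$ (using $f(0)=0$) and $(\zeta\Phi)_t$, $(\zeta\Phi)_y$ agree with $\Phi_t$, $\Phi_y$ wherever the integrands do not vanish. The paper's route uses only compactly supported test functions, hence no cutoff, but needs the extra limiting argument at $x=+\infty$; both ultimately rest on the compact support (finite speed of propagation) of $\rho$. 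Your Step~1, the Fubini verification of $F_x=\rho$, is the part the paper dismisses as obvious.
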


\begin{proof}

The first equality in~\eqref{eq:prop_app} is obvious. For any test function $\psi \in \Cc\infty(\left]0,+\infty\right[\times\reali;\reali)$ we have that by~\eqref{eq:weaksol}
    \begin{align*}
        &\int_{\reali} \int_{\reali_+} F(t,x) \,\partial_t\psi_x(t,x) \,{\der}t \,{\der}x
        =\int_{\reali} \int_{\reali_+} F(t,x) \,\partial_x\psi_t(t,x) \,{\der}t \,{\der}x
        \\
        &= -\int_{\reali} \int_{\reali_+} \rho(t,x) \,\psi_t(t,x) \,{\der}t \,{\der}x
        = \int_{\reali} \int_{\reali_+} f\left(\rho(t,x)\right) \psi_x(t,x) \,{\der}t \,{\der}x.
    \end{align*}
    This shows that for any $t\ge0$, the map $x \mapsto \left[ F_t(t,x) + f\left(\rho(t,x)\right)\right]$ is constant (as a distribution). Therefore there exists $c \in \Llloc1(\left[0,+\infty\right[;\reali)$ such that
    \begin{align*}
        & \int_{\reali} \int_{\reali_+} \left[\vphantom{\int_{\reali_+}} F(t,x) \, \varphi_t(t,x) - f\left(\rho(t,x)\right)  \varphi(t,x) + c(t) \, \varphi(t,x)\right] {\der}t \,{\der}x = 0.
    \end{align*}
    Choose now, for any integer $k\in \N$,
    \begin{align*}
        \varphi(t,x)=\eta(t) \, \psi(x-k),
    \end{align*}
    where $\eta\in \Cc\infty\left(\left]0,+\infty\right[;\reali\right)$ and $\psi\in \Cc\infty(\reali;\left[0,+\infty\right[)$ such that $\norma{\psi}_{\Ll1(\reali;\reali)} = 1$. We get
    \begin{align*}
       0 & = \int_{\reali} \int_{\reali_+} \left[\vphantom{\int_{\reali_+}}  F(t,x) \, \dot\eta(t) - f\left(\rho(t,x)\right)  \eta(t) + c(t) \, \eta(t)\right]\psi(x-k) \,{\der}t \,{\der}x \\
        & \ = \int_{\reali} \int_{\reali_+} \left[\vphantom{\int_{\reali_+}}  F(t,x+k) \, \dot\eta(t) - f\left(\rho\left(t,x+k\right)\right)  \eta(t) + c(t) \, \eta(t)\right]\psi(x)\,{\der}t \,{\der}x.
    \end{align*}
    By Lebesgue dominated convergence theorem, we can send $k$ to $+\infty$ and get
    \begin{align*}
       & 0 = \int_{\reali} \int_{\reali_+} \left[\vphantom{\int_{\reali_+}}  L \, \dot\eta(t) +c(t) \, \eta(t)\right]\psi(x)\,{\der}t \,{\der}x
       = \int_{\reali_+} c(t) \, \eta(t) \,{\der}t,
    \end{align*}
    and the above expression on the right hand side can be easily made nonzero by suitably choosing $\eta$, unless $c(t)=0$ for a.e.~$t\ge0$, which proves the assertion.
\end{proof}

For any $t\ge0$ the map $x \mapsto F(t,x)$ is strictly increasing on the intervals where the density $x \mapsto \rho(t,x)$ is not zero and otherwise it is constant. Therefore we can introduce $X \doteq \mathcal{X}[F]$, the pseudo-inverse of $F$. Now, assume for simplicity that $\rho\left(t, x\right) > 0$ for all $(t,x) \in \spt(\rho) = \left\{(t,x) \in \reali_+ \times\reali \,\colon\, a(t) \le x \le b(t)\right\}$. Then, for any $t\ge0$ by Proposition~\ref{prop:F} we have that $x \mapsto F(t,x)$ is strictly increasing on $\spt\left(\rho(t)\right)$. This implies that $\mathcal{X}[F]$ is the inverse of $F$ on the support of $\rho$, namely  $F\left(t, X(t,z)\right) = z$ on $(t,z)\in \reali_+\times\left[0,L\right]$, and, assuming that all the derivatives below are well defined, we have that
\begin{align*}
    &F_x\left(t, X(t,z)\right) = \rho\left(t, X(t,z)\right) >0 &
    \hbox{for a.e.~}(t,z) \in \reali_+\times\left[0,L\right] .
\end{align*}
Therefore,
\begin{align*}
    & 1 = \frac{\der}{{\der}z} F\left(t, X(t,z)\right) = F_x\left(t, X(t,z)\right)  X_z(t,z),\\
    & 0 = \frac{\der}{{\der}t} F\left(t, X(t,z)\right) = F_t\left(t, X(t,z)\right) + F_x\left(t, X(t,z)\right)  X_t(t,z),
\end{align*}
which yields, once again by Proposition~\ref{prop:F}, that $X(t,z)$ is indeed a solution of the PDE
\begin{align}\label{eq:LWR_Lagrangian}
  &X_t(t,z)= v\left(\frac{1}{X_z(t,z)}\right).
\end{align}
The initial condition $X(0,z)$ is determined by
\begin{equation*}
   \int_{-\infty}^{X(0,z)} \bar\rho(y) \,{\der}y = z.
\end{equation*}
The computation above is only rigorous on the sets in which $\rho\left(t, x\right) > 0$.

The last step needed in order to (formally) recognize the discrete model~\eqref{eq:FTL_intro} in~\eqref{eq:LWR_Lagrangian} is by replacing the $z$--derivative of $X$ in~\eqref{eq:LWR_Lagrangian} by the (forward) finite differences
\begin{equation}\label{eq:finite_diff0}
    X_z\approx \frac{X(t,z+\ell)-X(t,z)}{\ell},
\end{equation}
which gives
\begin{align*}
    & X_t(t,z) \approx v\left(\frac{\ell}{X(t,z+\ell)-X(t,z)}\right).
\end{align*}
Then the desired model~\eqref{eq:FTL_intro} is obtained by assuming that $X(t)$ is piecewise constant on intervals of measure $\ell$, with $X(t,j \, \ell)=x_j(t)$, $j=1,\ldots,N-1$. For any fixed $z \in \{i\,\ell \,\colon\, i=0,1, \ldots, N\}$, the map $t\mapsto X(t,z)$ can be ideally interpreted as the path described by the `infinitesimal particle' labelled with $z\in [0,L]$. Therefore, \eqref{eq:LWR_Lagrangian} can be interpreted as the expression in the Lagrangian coordinates $(t,z)$ of the Cauchy problem~\eqref{eq:LWRmodel}.

\section*{acknowledgements}
Both the authors acknowledge useful discussions with J.A.~Carrillo, R.M.~Colombo, P.~Degond, P.~Marcati, and D.~Matthes. M.~Di Francesco is supported by the Marie Curie CIG (Career Integration Grant) DifNonLoc - Diffusive Partial Differential Equations with Nonlocal Interaction in Biology and Social Sciences and by the Ministerio de Ciencia e Innovaci\'{o}n, grant MTM2011-27739-C04-02. M.D.~Rosini is supported by ICM. Projekt zosta{\l} sfinansowany ze \'srodk\'ow Narodowego Centrum Nauki przyznanych na podstawie decyzji nr: DEC-2011/01/B/ST1/03965.

%% BibTeX users please use one of
%%\bibliographystyle{spbasic}      % basic style, author-year citations
%\bibliographystyle{spmpsci}      % mathematics and physical sciences
%%\bibliographystyle{spphys}       % APS-like style for physics
%\bibliography{bibliografia}   % name your BibTeX data base

%% Non-BibTeX users please use
%\begin{thebibliography}{}
%%
%% and use \bibitem to create references. Consult the Instructions
%% for authors for reference list style.
%%
%\bibitem{RefJ}
%% Format for Journal Reference
%Author, Article title, Journal, Volume, page numbers (year)
%% Format for books
%\bibitem{RefB}
%Author, Book title, page numbers. Publisher, place (year)
%% etc
%\end{thebibliography}

\end{document}